\newtheorem{theorem}{Theorem}[section]
\newtheorem{lemma}[theorem]{Lemma}
\newtheorem{corollary}[theorem]{Corollary}
\theoremstyle{definition}
\newtheorem{definition}[theorem]{Definition}
\newtheorem{remark}[theorem]{Remark}
\numberwithin{equation}{section}
\title[Measure density and extension of Besov and T--L functions]{Measure density and extension
of \\ Besov and Triebel--Lizorkin functions}
\author{Toni Heikkinen}
\author{Lizaveta Ihnatsyeva}
\author{Heli Tuominen*}
\thanks{\\T.H.: Department of Mathematics, P.O. Box 11100, FI-00076 Aalto University, Finland, toni.heikkinen@aalto.fi,\\
L.I.: Department of Mathematics and Statistics, P.O. Box 35, FI-40014 University of Jyv\"askyl\"a, 
Finland, lizaveta.ihnatsyeva@aalto.fi\\
H.T.: Department of Mathematics and Statistics, P.O. Box 35, FI-40014 University of Jyv\"askyl\"a, Finland,
heli.m.tuominen@jyu.fi, +358 40 805 4594, * the corresponding author}
\newcommand\rn{\mathbb R^n}
\newcommand\re{\mathbb R}
\newcommand\n{\mathbb N}
\newcommand\z{\mathbb Z}
\newcommand\D{\mathbb D}
\newcommand\ph{\varphi}
\newcommand\eps{\varepsilon}
\newcommand\M{\operatorname{\mathcal M}}
\newcommand\cB{\mathcal B}
\newcommand\cF{\mathcal F}
\newcommand\cH{\mathcal H}
\providecommand{\ch}[1]{\text{\raise 2pt \hbox{$\chi$}\kern-0.2pt}_{#1}}
\providecommand{\vint}[1]{\mathchoice
          {\mathop{\vrule width 5pt height 3 pt depth -2.5pt
                  \kern -9.5pt \kern 1pt\intop}\nolimits_{\kern -5pt{#1}}}%
          {\mathop{\vrule width 5pt height 3 pt depth -2.6pt
                  \kern -6pt \intop}\nolimits_{\kern -3pt{#1}}}%
          {\mathop{\vrule width 5pt height 3 pt depth -2.6pt
                  \kern -6pt \intop}\nolimits_{\kern -3pt{#1}}}%
          {\mathop{\vrule width 5pt height 3 pt depth -2.6pt
                  \kern -6pt \intop}\nolimits_{\kern -3pt{#1}}}}
\begin{document}

\begin{abstract}

We show that a domain is an extension domain for a Haj\l asz--Besov or for a Haj\l asz--Triebel--Lizorkin space if and only if
it satisfies a measure density condition.  We use a modification of the Whitney extension where integral averages  are 
replaced by median values, which allows us to handle also the case $0<p<1$. The necessity of the measure density condition 
is derived from embedding theorems; in the case of Haj\l asz--Besov spaces we apply an optimal Lorentz-type Sobolev 
embedding theorem which we prove using a new interpolation result. This interpolation theorem says that Haj\l asz--Besov 
spaces are intermediate spaces between $L^p$ and  Haj\l asz--Sobolev spaces.
Our results are proved in the setting of a metric measure space, but most of them are new even in the Euclidean setting,
for instance, we obtain a characterization of extension domains for classical Besov spaces 
$B^s_{p,q}$, $0<s<1$, $0<p<\infty$, $0<q\le\infty$, defined via the $L^p$-modulus of smoothness of a function.

\end{abstract}

\subjclass[2010]{46E35,46B70}

\maketitle

{\em Keywords:} Besov space, Triebel--Lizorkin space, extension domain, measure density, metric measure space

\section{Introduction}

The restriction and extension problems for Besov spaces and Triebel--Lizorkin spaces in the setting of the Euclidean space
have been studied by several authors using different methods; see for example \cite{Mu}, \cite{BIN}, \cite{Ka}, \cite{Sh86}, 
\cite{Ry1}, \cite{Se}, \cite{DS}, \cite{Mi}, \cite{ShRn} and the references therein.
In particular, it is known that if $\Omega\subset\rn$ is a Lipschitz domain or an $(\eps,\delta)$-domain,
then there is a bounded extension operator from the classical Besov space $B^s_{p,q}(\Omega)$, defined via the $L^p$-
modulus of smoothness of a function, to $B^s_{p,q}(\rn)$, $0<s,p,q<\infty$; see \cite{Sh86} and \cite{DS}.
The analogous extension results hold for Triebel--Lizorkin spaces; see \cite{Se}, \cite{Mi} and \cite{T}.

Although the class of the $(\eps,\delta)$-domains, defined in \cite{J}, is rather wide, it does not cover all domains which admit 
an extension property for Besov spaces or for Triebel--Lizorkin spaces.
For example, by \cite[Thm 5.1]{Ry2}, some $d$-thick domains in $\rn$, measured with the $d$-dimensional Hausdorff content, 
are extension domains for certain Besov and Triebel--Lizorkin spaces.
It is also known that the trace spaces of Besov and Triebel--Lizorkin spaces to an $n$-regular
set $S\subset\rn$ can be intrinsically characterized \cite[Thm 1.3, Thm 1.6]{ShRn}, such $S$ admits an extension for Besov 
and Triebel--Lizorkin spaces defined in terms of local polynomial approximations.
See also \cite{Sh86} and \cite{JW} for the related results.

The connection of the $n$-regularity condition, or, in other words, a measure density condition,
and the extension property for Sobolev spaces is studied in \cite{ShRn} and in \cite{HKT_JFA}. 
By \cite[Thm 5]{HKT_JFA}, a domain $\Omega\subset\rn$ is an extension domain for $W^{k,p}$, $1<p<\infty$, $k\in\n$,  if and 
only if $\Omega$ satisfies the measure density condition and $W^{k,p}(\Omega)$ coincides with the Calderon-Sobolev space 
defined via sharp maximal functions.
For fractional Sobolev spaces $W^{s,p}(\Omega)$, $0<s<1$, $0<p<\infty$, which are special cases of Besov and Triebel--
Lizorkin spaces when $p=q$, the measure density condition characterizes extension domains by \cite[Thm 1.1]{Z}.
A natural question to ask is whether the same statement is true for Besov and Triebel--Lizorkin spaces within the full range of 
parameters $0<p<\infty$, $0<q\le\infty$. Moreover, if $0<s<1$, this question can be studied in a general setting of a metric 
measure space. The recent development of the theory of function spaces in metric measure spaces does
not only provide a uniform approach for characterizing smoothness function spaces on topological manifolds, fractals, graphs, 
and Carnot--Carath\'eodory spaces, but at the same time it gives a new point of view to the classical Besov spaces and 
Triebel--Lizorkin spaces on the Euclidean space.

Among several possible definitions of Besov and Triebel--Lizorkin spaces in the metric setting, the definition recently 
introduced in \cite{KYZ} appears to be very convenient for the study of extension problems.
This approach is based on Haj\l asz type pointwise inequalities; it leads to the classical Besov and Triebel--Lizorkin spaces in 
the setting of the Euclidean space and it gives a simple way to define these spaces on a measurable subset of $\rn$ or, more 
generally, on a metric measure space.

\begin{definition}
Let $(X,d)$ be a metric space equipped with a measure $\mu$.
A measurable set $S\subset X$ satisfies a {\em measure density condition}, if there exists a constant $c_{m}>0$ such that
\begin{equation}\label{measure density}
\mu(S\cap B(x,r))\ge c_{m}\mu(B(x,r))
\end{equation}
for all balls $B(x,r)$ with $x\in S$ and $0<r\le 1$.
\end{definition}

Note that in the literature sets satisfying condition \eqref{measure density} are sometimes called regular sets, see, for 
example, \cite{ShMetric}. If the measure $\mu$ is doubling, then the upper bound $1$ for the radius $r$ is not essential, and 
we can replace it by any number $0<R<\infty$. Roughly speaking, the measure density condition means that the set $S$ 
cannot be too thin near the boundary, in particular, by \cite[Lemma 2.1]{ShMetric}, it implies that $\mu(\overline S\setminus 
S)=0$.
In the Euclidean space, nontrivial examples of sets satisfying the measure density condition are Cantor-like sets such as 
Sierpi\'nski carpets of positive measure.
\medskip

Recall that if $\mathcal{A}$ is a quasi-Banach space of measurable functions and $S\subset X$,
an operator $E\colon \mathcal{A}(S)\to \mathcal{A}(X)$ such that $Eu\vert_S=u$, for all $u\in \mathcal{A}(S)$,
is called an {\em extension operator}.
A domain $\Omega\subset X$ is an $\mathcal{A}$-{\em extension domain} if there is a bounded extension operator
$E\colon \mathcal{A}(\Omega)\to \mathcal{A}(X)$.

In the metric setting, a connection between the measure density condition and the extension property for Sobolev spaces has 
been studied in \cite{HKT_Revista} and in \cite{ShMetric}.
By \cite[Thm 6]{HKT_Revista}, \cite[Thm 1.3]{ShMetric}, the measure density condition for a set $S$ implies the existence of a 
bounded, linear extension operator on the Ha\l asz--Sobolev space $M^{1,p}(S)$, for all $1\le p<\infty$.
In a geodesic, $Q$-regular metric measure space, the measure density condition characterizes extension domains for 
$M^{1,p}$, $1\le p<\infty$, see \cite{HKT_Revista}.

Our first main result is an extension theorem for Haj\l asz--Triebel--Lizorkin spaces $M^s_{p,q}$ and for Haj\l asz--Besov 
spaces $N^s_{p,q}$, see Section \ref{preliminaries} for the definitions.

\begin{theorem}\label{m extension}
Let $X$ be a metric measure space with a doubling measure $\mu$ and let $S\subset X$ be a measurable set.
If $S$ satisfies measure density condition \eqref{measure density},
then there is a bounded extension operator $E\colon M^s_{p,q}(S)\to M^s_{p,q}(X)$, for all $0<p<\infty$, $0<q\le\infty$, 
$0<s<1$.
The operator norm of $E$ depends on $c_m$, $p$, $q$, $s$ and on the doubling constant of $\mu$.
An analogous extension result holds for Haj\l asz--Besov spaces $N^s_{p,q}$.
\end{theorem}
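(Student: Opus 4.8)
The plan is to build the extension by a median-modified Whitney construction and then to manufacture a fractional $s$-gradient of $Eu$ on all of $X$ out of a given fractional $s$-gradient of $u$ on $S$. First I would reduce to the case that $S$ is closed: since measure density condition \eqref{measure density} forces $\mu(\overline S\setminus S)=0$, passing from $S$ to $\overline S$ affects neither $M^s_{p,q}(S)$ nor the requirement $Eu|_S=u$ $\mu$-a.e. Set $\Omega=X\setminus S$, which is open, and fix a Whitney covering $\{B_i\}=\{B(p_i,r_i)\}$ of $\Omega$ with $r_i\approx\operatorname{dist}(p_i,S)$, with $\{2B_i\}$ of bounded overlap, and with $r_i\approx r_j$ whenever $2B_i\cap 2B_j\neq\emptyset$; let $\{\ph_i\}$ be a subordinate partition of unity, $\operatorname{supp}\ph_i\subset 2B_i$, $\sum_i\ph_i\equiv1$ on $\Omega$, $|\ph_i(x)-\ph_i(y)|\le C\,d(x,y)/r_i$. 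For each $i$ choose $p_i^\ast\in S$ with $d(p_i,p_i^\ast)\approx\operatorname{dist}(p_i,S)$ and put $B_i^\ast=B(p_i^\ast,r_i)$ (up to a fixed dilation), so that, using the doubling property to drop the radius restriction in \eqref{measure density}, $\mu(S\cap B_i^\ast)\gtrsim\mu(B_i^\ast)$. I then define $Eu=u$ on $S$ and, for $x\in\Omega$, $Eu(x)=\sum_i\ph_i(x)\,m_u(S\cap B_i^\ast)$, where $m_u(A)$ denotes a median of $u$ over $A$; only finitely many terms are nonzero near a given $x\in\Omega$. We do not claim $E$ is linear — the median is not linear — which is consistent with the statement.

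The argument then rests on two median estimates, which I expect to be available from Section~\ref{preliminaries}. The first is a \emph{median Poincar\'e-type inequality}: if $B$ is a ball with $\mu(S\cap B)\gtrsim\mu(B)$, $2^{-k}\approx\diam B$, and $(g_j)$ is a fractional $s$-gradient of $u$ on $S$, then for $\mu$-a.e.\ $y\in S\cap B$ one has $|u(y)-m_u(S\cap B)|\lesssim(\diam B)^s\bigl(g_k(y)+\sum_{j\ge k}2^{-(j-k)s}\vint{S\cap 2^{j-k}B}g_j\,d\mu\bigr)$, or a suitable variant thereof. The second is its corollary for neighbouring reflected balls, $|m_u(S\cap B_i^\ast)-m_u(S\cap B_j^\ast)|\lesssim r_i^{\,s}\,a_i$, where $a_i$ is an average, over a fixed dilate of $B_i^\ast$, of the $g_j$'s at the scale $\approx r_i$. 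These two facts are the whole reason for working with medians: $m_u(S\cap B)$ is well defined for every measurable $u$ and obeys these bounds with constants independent of $p$ and $q$.

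Next comes the core step, constructing a fractional $s$-gradient $(h_k)_{k\in\z}$ of $Eu$ on $X$. On $S$ one may take $h_k$ comparable to $g_k$ (extended by the maximal-type truncation that makes it measurable on $X$). For $x\in\Omega$ with $2^{-k}\lesssim\operatorname{dist}(x,S)$ I would set $h_k(x)\approx\sum_{i:\,x\in 2B_i}(2^{-k}/r_i)^{1-s}a_i$: the finitely many Whitney balls through $x$ have comparable radius $r_i\approx\operatorname{dist}(x,S)$, so this is essentially one term, and since $0<s<1$ the factor $(2^{-k}/r_i)^{1-s}$ decays geometrically in $k$, which is exactly what makes $\|(h_k)_k\|_{\ell^q}\lesssim a_i$ over this range of $k$. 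For $x\in\Omega$ with $2^{-k}\gtrsim\operatorname{dist}(x,S)$ I would instead let $h_k(x)$ be a suitably chained and averaged combination of the $g_j$'s near the nearest point $p_i^\ast$ at the scale $2^{-k}$. With $h_k$ so defined, verifying $|Eu(x)-Eu(y)|\le d(x,y)^s\bigl(h_k(x)+h_k(y)\bigr)$ for $2^{-k-1}\le d(x,y)<2^{-k}$ splits into the usual cases: $x,y\in S$ (use $g_k$); $x,y\in\Omega$ with $d(x,y)\lesssim\operatorname{dist}(x,S)$ (subtract a common median using $\sum_i\ph_i(x)=\sum_i\ph_i(y)=1$, apply the Lipschitz bound on $\ph_i$ and the corollary above along a short chain of neighbouring Whitney balls, gaining the factor $(d(x,y)/r_i)^{1-s}$); and the remaining cases, with $x\in\Omega$ and $y\in S$, or $x,y\in\Omega$ far apart relative to their distances to $S$, which are routed through the nearest points $p_i^\ast,p_j^\ast\in S$ via the median Poincar\'e inequality at scale $2^{-k}$ together with $d(p_i,p_i^\ast)\lesssim d(x,y)$. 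Finally I would bound $\|(h_k)_k\|_{\ell^q(L^p)}$ for the $N^s_{p,q}$ claim and $\bigl\|\,\|(h_k)_k\|_{\ell^q}\,\bigr\|_{L^p}$ for the $M^s_{p,q}$ claim by the corresponding norm of $(g_j)$: at a fixed scale each $h_k$ is a boundedly-overlapping sum of averages of the $g_j$'s over comparable balls, so the estimate follows from H\"older on each average, the bounded overlap of the Whitney balls, and — crucially, since $p$ and $q$ may be below $1$ — the quasi-subadditivity $(\sum_i t_i)^{\min(p,1)}\le\sum_i t_i^{\min(p,1)}$ in place of the Hardy--Littlewood maximal inequality, which is unavailable there. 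All constants depend only on $c_m$, $p$, $q$, $s$ and the doubling constant, which yields the asserted bound on $\|E\|$; the Besov case needs no new construction, only taking the two norms in the opposite order, since the per-scale estimates above are already organized scale by scale.

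The step I expect to be the main obstacle is precisely this last one — running the whole Whitney scheme with constants uniform down to $0<p<\infty$ and $0<q\le\infty$. In the classical construction one controls differences of integral averages by Poincar\'e inequalities and then sums using boundedness of the maximal operator on $L^p(\ell^q)$, which fails for $p\le1$ (and whose $\ell^q$ version fails for $q\le1$). Replacing averages by medians keeps $Eu$ well defined and keeps the telescoping estimates valid with $p$-independent constants, while reorganizing the final summation so that it only ever uses bounded overlap of fixed-scale balls together with H\"older and quasi-subadditivity is what closes the argument in the full parameter range; the hypothesis $0<s<1$ enters essentially through the geometrically decaying factor $(2^{-k}/r_i)^{1-s}$ that renders the local Whitney term $\ell^q$-summable in $k$.
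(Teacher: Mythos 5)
Your overall architecture — reduce to $S$ closed, cover $X\setminus S$ by Whitney balls, send $u$ to $\sum_i\ph_i\,m_u(2B_i^{\ast}\cap S)$, then verify the four cases $x,y\in S$; $x\in\Omega,y\in S$; $x,y\in\Omega$ far; $x,y\in\Omega$ near — is exactly the paper's. The gap is in how you propose to define and then sum the fractional $s$-gradient $(h_k)$ of $Eu$.

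First, the premise that leads you away from the paper's route is mistaken: you write that the Hardy--Littlewood/Fefferman--Stein maximal inequality is ``unavailable'' when $p,q\le 1$, and therefore plan to replace it by bounded overlap, H\"older and quasi-subadditivity. In fact the paper deliberately builds the gradient out of \emph{powered} maximal functions,
\[
\tilde g_k(x)=\sum_{j<k}2^{(j-k)\delta}\big(\M g_j^t(x)\big)^{1/t}+\sum_{j\ge k-6}2^{(k-j)(s-\eps')}\big(\M g_j^t(x)\big)^{1/t},
\]
with $0<t<\min\{p,q\}$, so that $\M$ acts on $L^{p/t}(\ell^{q/t})$ with exponents above $1$; the vector-valued Fefferman--Stein theorem on doubling spaces (the paper cites Sawano and Grafakos--Liu--Yang) then gives $\|(\tilde g_k)\|_{L^p(\ell^q)}\lesssim\|(g_k)\|_{L^p(\ell^q)}$ directly, together with Lemma~\ref{summing lemma} for the geometric weights. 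The trick ``raise to the power $t$, apply $\M$, take $t$-th root'' is precisely what makes the maximal function usable in the full quasi-Banach range, and it is what your plan forgoes.

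Second, the alternative you sketch is not adequate for the Triebel--Lizorkin case. Your summation scheme (``at a fixed scale each $h_k$ is a boundedly-overlapping sum of averages…, bounded overlap…, quasi-subadditivity'') is genuinely a per-scale argument, i.e.\ it would bound $\|h_k\|_{L^p}$ for each fixed $k$ and then sum in $\ell^q$ — that is the Besov norm $\ell^q(L^p)$, and there your plan is at least plausible. For $M^s_{p,q}$ the order of norms is reversed: you must first control $\|(h_k(x))_k\|_{\ell^q}$ pointwise and then integrate. At a point $x$ in a Whitney ball $B_i$ the ``far'' terms $h_k(x)$, $2^{-k}\gtrsim r_i$, are averages of the $g_j$'s over balls $B(p_i^\ast, C2^{-k})$ of \emph{all} larger scales, and the supremum or the $\ell^q$ norm of these across $k$ is, up to constants, already a maximal function of $(g_j^t)^{1/t}$ at $x$. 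There is no bounded-overlap argument that recovers this at a fixed scale, because for a fixed enlarged scale $2^{-k}$ the reflected balls $B(p_i^\ast,C2^{-k})$ coming from Whitney balls with $r_i\ll 2^{-k}$ overlap with multiplicity of order $(2^{-k}/r_i)^Q$. So the scheme needs the vector-valued maximal inequality anyway; the paper supplies it, your plan suppresses it without a replacement.

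Third, a smaller point: your corollary ``$|m_u(S\cap B_i^\ast)-m_u(S\cap B_j^\ast)|\lesssim r_i^{\,s}a_i$ with $a_i$ an average of the $g_j$'s at the scale $\approx r_i$'' is optimistic. What the median Poincar\'e estimate (Lemma~\ref{median difference}) actually gives is a \emph{tail sum} $\sum_{j\ge k_i-c}2^{-(j-k_i)(s-\eps')}(\vint g_j^t)^{1/t}$ over all finer scales, not a single-scale average; this is why the gradient $\tilde g_k$ has the two-sided tail structure it does, and why Lemma~\ref{summing lemma} is needed to collapse it. Any careful write-up of your case analysis will be forced into the same shape, at which point one is essentially rederiving the paper's construction.

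In short: the geometric skeleton matches the paper, but the one step you identify as the main obstacle — the summation in the full range $0<p<\infty$, $0<q\le\infty$ — is precisely where your plan is incorrect as written. The $t$-power maximal function plus Fefferman--Stein, used by the paper, is not an option you are entitled to throw away for small $p,q$; on the contrary, it is the device that makes the range $p,q\le1$ tractable.
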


As in the corresponding results for the Haj\l asz--Sobolev spaces $M^{1,p}$ in \cite{HKT_Revista}
and for the fractional Sobolev spaces $W^{s,p}$, $0<s<1$, $0<p<\infty$, in \cite{Z}, the extension is independent of the 
parameters of a function space.
In general, our extension operator is not linear. This is due to the use of a modified Whitney type extension where integral 
averages are replaced by medians; similar modification was previously used in \cite{Z}. But if $p>Q/(Q+s)$, where $Q$ is the 
doubling dimension of the space, an extension operator in 
Theorem \ref{m extension} can be chosen linear by employing the classical construction with integral averages.

In the Euclidean case, $N^s_{p,q}(\rn)=B^s_{p,q}(\rn)$ and $M^s_{p,q}(\rn)=F^s_{p,q}(\rn)$ for all $0<p<\infty$, $0<q\le\infty$, 
$0<s<1$, 
where $B^s_{p,q}(\rn)$ and $F^s_{p,q}(\rn)$ are Besov spaces and Triebel--Lizorkin spaces defined via an
$L^p$-modulus of smoothness, \cite{GKZ}; recall that the Fourier analytic approach gives the same spaces when $p>n/(n+s)$ 
in the Besov case and when $p,q>n/(n+s)$ in the Triebel--Lizorkin case. Theorem \ref{m extension}, in particular, shows that 
the trace spaces of the classical Besov and Triebel--Lizorkin spaces on regular sets can be characterized in terms of pointwise 
inequalities. Indeed, it follows that $B^s_{p,q}(\rn)|_{S}=N^s_{p,q}(S)$ and $F^s_{p,q}(\rn)|_{S}=M^s_{p,q}(S)$ with equivalent 
norms.
\medskip

The following statement, which is a combination of Theorem \ref{m extension} and Theorem \ref{measure density from 
extension}, is our second main result.

\begin{theorem}\label{m extension measure}
Let $X$ be a $Q$-regular, geodesic metric measure space and let $\Omega\subset X$ be a domain.
The following conditions are equivalent:
\begin{enumerate}
\item $\Omega$ satisfies measure density condition \eqref{measure density};

\item $\Omega$ is an $M^s_{p,q}$-extension domain for all $0<s<1$, $0<p<\infty$ and $0<q\le\infty$;

\item $\Omega$ is an $M^s_{p,q}$-extension domain for some values of parameters
$0<s<1$, $0<p<\infty$ and $0<q\le\infty$.

\item $\Omega$ is an $N^s_{p,q}$-extension domain for all $0<s<1$, $0<p<\infty$ and $0<q\le\infty$;

\item $\Omega$ is an $N^s_{p,q}$-extension domain for some values of parameters
$0<s<1$, $0<p<\infty$ and $0<q\le\infty$.
\end{enumerate}
\end{theorem}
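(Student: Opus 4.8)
The plan is to establish the cycle of implications $(1)\Rightarrow(2)\Rightarrow(3)\Rightarrow(1)$ together with $(1)\Rightarrow(4)\Rightarrow(5)\Rightarrow(1)$. Since a $Q$-regular measure is doubling, Theorem~\ref{m extension} applies and yields $(1)\Rightarrow(2)$ and $(1)\Rightarrow(4)$ at once, while $(2)\Rightarrow(3)$ and $(4)\Rightarrow(5)$ are trivial specializations. Everything therefore reduces to the \emph{necessity} of the measure density condition: if $\Omega$ is an $M^s_{p,q}$-extension domain, or an $N^s_{p,q}$-extension domain, for \emph{some} admissible triple $(s,p,q)$, then $\Omega$ satisfies \eqref{measure density}. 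This is precisely Theorem~\ref{measure density from extension}, and it is the only place where the geodesic and $Q$-regularity hypotheses on $X$ are used.

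For the necessity part I would argue by a test-function and iteration scheme. Fix $x\in\Omega$ and $0<r\le1$, and write $m(\rho)=\mu(\Omega\cap B(x,\rho))$ and $\theta(\rho)=m(\rho)/\mu(B(x,\rho))$; since $\Omega$ is open, $\theta(\rho)=1$ for all sufficiently small $\rho$, so the content is a \emph{uniform} lower bound for $\theta$ on $(0,1]$. First, a bounded extension operator transfers the Sobolev-type embedding of the ambient space to $\Omega$: if $E\colon\mathcal A(\Omega)\to\mathcal A(X)$ is bounded (with $\mathcal A=M^s_{p,q}$ or $N^s_{p,q}$) and $\mathcal A(X)\hookrightarrow L^{p^*}(X)$, $p^*=Qp/(Q-sp)$ in the subcritical range $sp<Q$ (for $N^s_{p,q}$ the sharper Lorentz target $L^{p^*,q}(X)$), then $\|u\|_{L^{p^*}(\Omega)}=\|Eu\|_{L^{p^*}(\Omega)}\le\|Eu\|_{L^{p^*}(X)}\lesssim\|Eu\|_{\mathcal A(X)}\lesssim\|u\|_{\mathcal A(\Omega)}$ for every $u\in\mathcal A(\Omega)$. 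Next, apply this to a Lipschitz cutoff $u=u_{x,r}$ with $u=1$ on $B(x,r/2)$, $\operatorname{supp}u\subset B(x,r)$ and Lipschitz constant $\lesssim r^{-1}$; restricted to $\Omega$ it admits a fractional $s$-gradient sequence of size $\lesssim r^{-s}\chi_{B(x,r)\cap\Omega}$, so $\|u\|_{\mathcal A(\Omega)}\lesssim r^{-s}m(r)^{1/p}$ (the $L^p$-term is absorbed since $r\le1$), while $\|u\|_{L^{p^*}(\Omega)}\gtrsim m(r/2)^{1/p^*}$ (here one uses that the Lorentz quasinorm of a characteristic function of $F$ is comparable to $\mu(F)^{1/p^*}$, which also covers $q=\infty$). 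Combining the two estimates, writing $1/p^*=1/p-s/Q$, and invoking $Q$-regularity yields a recursion of the form
\[\theta(r/2)\le C\,\theta(r)^{1+sp/Q},\]
with $C$ depending only on $s,p,q$, the $Q$-regularity constants and $\|E\|$. Iterating this inequality \emph{upward} in scale, starting from a scale at which $\theta\equiv1$, and summing the resulting geometric series of exponents $(1+sp/Q)^{-l}$ gives $\theta(r)\ge C^{-Q/(sp)}$ for all $0<r\le1$, which is \eqref{measure density} with $c_m=C^{-Q/(sp)}$.

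The main obstacle is supplying the two inputs to this scheme. The first is the optimal Sobolev-type embedding on $X$: for $M^s_{p,q}$ one needs $M^s_{p,q}(X)\hookrightarrow L^{p^*}(X)$, while for $N^s_{p,q}$ one needs the sharp Lorentz embedding $N^s_{p,q}(X)\hookrightarrow L^{p^*,q}(X)$ (the plain $L^{p^*}$ target fails for $q$ large, e.g.\ $q=\infty$), which as announced in the abstract is obtained from a new interpolation theorem exhibiting $N^s_{p,q}$ as an intermediate space between $L^p$ and the Haj\l asz--Sobolev space $M^{1,p}$; establishing this interpolation identity and deducing the Lorentz embedding from the known Sobolev embedding for $M^{1,p}$ on a $Q$-regular space is the technical heart. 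The second is the supercritical range $sp\ge Q$, where $L^{p^*}$ must be replaced by an arbitrarily large finite Lebesgue exponent (or by a Hölder space) and the recursion above has to be set up differently; this case is treated separately inside Theorem~\ref{measure density from extension}. Granting both ingredients, assembling the cycle of implications is immediate.
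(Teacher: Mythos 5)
Your top-level assembly is exactly the paper's proof of this statement: $Q$-regularity implies doubling, so Theorem~\ref{m extension} gives $(1)\Rightarrow(2)$ and $(1)\Rightarrow(4)$, the implications $(2)\Rightarrow(3)$ and $(4)\Rightarrow(5)$ are trivial, and the necessity $(3)\Rightarrow(1)$, $(5)\Rightarrow(1)$ is Theorem~\ref{measure density from extension}. Where you go further and sketch the necessity, your route differs from the paper's and, taken on its own, has two genuine gaps. First, you feed the scheme with an \emph{inhomogeneous} embedding $\mathcal A(X)\hookrightarrow L^{p^*}(X)$ (resp.\ $L^{p^*,q}(X)$) with no constant subtraction, but what the paper proves (Theorem~\ref{thm: Lorentz-Sobolev}, and Lemma~\ref{lemma:Sobolev-Poincare 1} locally) is only $\inf_{c}\Vert u-c\Vert\le C\Vert u\Vert_{\dot N^s_{p,q}}$. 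With an uncontrolled constant $c_0$, your lower bound $\Vert u\Vert_{L^{p^*}(\Omega)}\gtrsim m(r/2)^{1/p^*}$ breaks down (if $c_0$ is near $1$ the level set where $u=1$ gives nothing), and this is precisely why the paper does not use your cutoff: it chooses radii $\tilde{\tilde r}<\tilde r<r$ bisecting the measure of $B\cap\Omega$ (this is where geodesicity, via $\mu(\partial B)=0$, enters) so that $|v-c_0|\ge 1/2$ on one of two sets of comparable measure, and then runs a telescoping iteration $\tilde r-\tilde{\tilde r}\le C\mu(B(x,\tilde r)\cap\Omega)^{1/Q}$ closed by \cite[Lemma 14]{HKT_Revista} (using connectedness). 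One can rescue your version by arguing that $c_0$ is forced to be $0$ when $\mu(X)=\infty$ (and is controllable by $\Vert u\Vert_{L^p}$ when $\mu(X)<\infty$), but as written this step is missing; note also the exponent in your recursion should be $p^*/p=Q/(Q-sp)$ rather than $1+sp/Q$ (harmless). Your dyadic iteration anchored at small scales where $\theta\equiv1$ is a nice simplification that would avoid the measure-bisection and Lemma 14 for $x\in\Omega$ in the subcritical range, if the embedding issue is repaired.

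Second, the supercritical ranges are not optional extras that can be "treated separately": conditions $(3)$ and $(5)$ allow \emph{some} parameters with $sp\ge Q$, so the equivalence is not established without them. The paper handles $sp>Q$ by a Morrey-type estimate (the analogue of \cite[Lemma 8]{H2} for $M^s_{p,q}$, and a separate H\"older estimate derived from the Poincar\'e inequality \eqref{eq: Poincare} for $N^s_{p,q}$), and $sp=Q$ by a Hausdorff-content capacity lemma (Lemma~\ref{LemHeinKosk}) in the Triebel--Lizorkin case and by a BMO/John--Nirenberg argument in the Besov case; none of these fits the $L^{p^*}$ recursion you set up, so deferring them leaves the necessity, and hence the theorem, unproved for those parameters.
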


To our knowledge, the fact that an extension domain for the Besov space, or for the Triebel--Lizorkin space, necessarily 
satisfies
the measure density condition is also new in the Euclidean setting; except for the special case $p=q$ which was earlier proved 
in 
\cite{Z}.
Let us also mention that the assumption on $X$ to be geodesic is used only to guarantee the property that the boundary of 
each metric ball has zero measure.

In order to show that extension domains satisfy the measure density property, we need a suitable Sobolev type embedding 
theorem. For Haj\l asz--Triebel--Lizorkin spaces such an embedding is easy to get, since they are subspaces of fractional Haj\l 
asz--Sobolev spaces.
To obtain an embedding theorem for Haj\l asz--Besov spaces, we show in Theorem \ref{thm: interpolation} that Haj\l asz--
Besov spaces are interpolation spaces between $L^p$ and Haj\l asz--Sobolev spaces, that is,
\[
N^s_{p,q}(X)=(L^p(X),M^{1,p}(X))_{s,q},
\]
for $0<s<1$, $0<p<\infty$ and $0<q\le\infty$.
We found this result of independent interest; in case of $p>1$, $q\ge 1$ it was earlier obtained in \cite{GKS} under an 
additional assumption that the underlying space supports a weak $(1,p)$-Poincar\'e inequality.
\medskip

We close the paper with an application of Theorem \ref{m extension measure}
to Besov and Triebel--Lizorkin spaces defined in the Euclidean space. 
In particular, we obtain the following result for the classical Besov spaces $B^{s}_{p,q}$, $0<s<1$, $0<p<\infty$ and $0<q\le
\infty$, defined via the $L^p$-modulus of smoothness.

\begin{theorem}\label{ExtensionEuclideanCase}
Let $\Omega\subset\rn$ be a domain. The following conditions are equivalent:
\begin{enumerate}
\item $\Omega$ satisfies measure density condition \eqref{measure density};

\item $\Omega$ is a $B^s_{p,q}$-extension domain for all $0<s<1$, $0<p<\infty$ and $0<q\le\infty$;

\item $\Omega$ is a $B^s_{p,q}$-extension domain for some $0<s<1$, $0<p<\infty$ and $0<q\le\infty$;

\end{enumerate}
\end{theorem}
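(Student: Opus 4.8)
The plan is to deduce Theorem~\ref{ExtensionEuclideanCase} from the metric-space result Theorem~\ref{m extension measure} by taking $X=\rn$ with Lebesgue measure. The space $\rn$ is $n$-regular (so $Q$-regular with $Q=n$) and geodesic, so Theorem~\ref{m extension measure} applies verbatim. The only genuine work is to identify the metric Haj\l asz--Besov spaces $N^s_{p,q}$ on subsets of $\rn$ with the classical Besov spaces $B^s_{p,q}$ defined via the $L^p$-modulus of smoothness, both on all of $\rn$ and, crucially, on an arbitrary domain $\Omega$, so that ``$B^s_{p,q}$-extension domain'' and ``$N^s_{p,q}$-extension domain'' mean the same thing.

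First I would record the identification $N^s_{p,q}(\rn)=B^s_{p,q}(\rn)$ with equivalent (quasi-)norms, valid for all $0<s<1$, $0<p<\infty$, $0<q\le\infty$; this is exactly the result of \cite{GKZ} already cited in the excerpt. Next, the key point is that this identification is local enough to pass to a domain: for a domain $\Omega\subset\rn$ one has $N^s_{p,q}(\Omega)=B^s_{p,q}(\Omega)$ with equivalent norms, where $B^s_{p,q}(\Omega)$ is defined intrinsically through the $L^p(\Omega)$-modulus of smoothness restricted to increments staying in $\Omega$. One direction, that the Haj\l asz-type condition controls the modulus of smoothness, follows by integrating the pointwise inequality along segments inside $\Omega$; for the converse one uses that a fractional Haj\l asz gradient for $u$ can be built from the modulus of smoothness via a maximal-function argument, as in \cite{GKZ}. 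Strictly speaking, for the present theorem it would suffice to know the two-sided estimate up to the measure density hypothesis or up to restriction from $\rn$, so I would state the $\Omega$-version as a lemma and invoke \cite{GKZ} for its proof, noting that when $\Omega$ satisfies the measure density condition the equivalence is also immediate from Theorem~\ref{m extension} together with the $\rn$ identification (giving $B^s_{p,q}(\rn)|_\Omega=N^s_{p,q}(\rn)|_\Omega=N^s_{p,q}(\Omega)$).

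With these identifications in hand the proof is a short chase. The implication $(1)\Rightarrow(2)$: if $\Omega$ satisfies \eqref{measure density}, then by Theorem~\ref{m extension} there is a bounded extension $E\colon N^s_{p,q}(\Omega)\to N^s_{p,q}(\rn)$ for every admissible $s,p,q$; composing with the norm equivalences $N^s_{p,q}(\Omega)\simeq B^s_{p,q}(\Omega)$ and $N^s_{p,q}(\rn)\simeq B^s_{p,q}(\rn)$ yields a bounded extension $B^s_{p,q}(\Omega)\to B^s_{p,q}(\rn)$. The implication $(2)\Rightarrow(3)$ is trivial. For $(3)\Rightarrow(1)$: a bounded extension for $B^s_{p,q}$ for some triple $(s,p,q)$ transfers, via the same equivalences, to a bounded extension for $N^s_{p,q}$ for that triple, so $\Omega$ is an $N^s_{p,q}$-extension domain for some parameters; by the equivalence $(5)\Leftrightarrow(1)$ in Theorem~\ref{m extension measure}, $\Omega$ satisfies the measure density condition.

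The main obstacle is purely bookkeeping: making sure the equality $N^s_{p,q}=B^s_{p,q}$ holds \emph{on the domain $\Omega$} with the \emph{intrinsic} Besov norm, and that the norm equivalence constants are uniform enough that a bounded operator stays bounded after conjugation. There is no deep new argument here---everything rests on \cite{GKZ} and on Theorems~\ref{m extension} and \ref{m extension measure}---but one must be slightly careful that the classical Besov space on $\Omega$ is the one defined with first-order differences (permissible since $0<s<1$) and with increments constrained to $\Omega$, which is precisely the class for which the Haj\l asz characterization is available. I would also remark, as the excerpt's introduction already flags, that the necessity half ($(3)\Rightarrow(1)$) is new for $B^s_{p,q}$ outside the diagonal case $p=q$ treated in \cite{Z}.
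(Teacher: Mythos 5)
Your overall strategy---conjugating the Haj\l asz--Besov extension results (Theorems \ref{m extension}, \ref{m extension measure} and \ref{measure density from extension}) through a comparison of $N^s_{p,q}$ with the classical difference-defined Besov spaces---is the same as the paper's, but the lemma you rest it on is not available: you assert $N^s_{p,q}(\Omega)=B^s_{p,q}(\Omega)$ with equivalent norms for an \emph{arbitrary} domain $\Omega$, citing \cite{GKZ}. The identification in \cite{GKZ} is proved on doubling metric measure spaces (so on $\rn$, or on a set satisfying \eqref{measure density}, for which $(\Omega,d,\mu|_\Omega)$ is locally doubling); a general domain with the restricted Lebesgue measure is not doubling, and the direction you sketch as ``build a fractional Haj\l asz gradient from the modulus of smoothness via a maximal-function argument'' genuinely uses $|B(x,t)\cap\Omega|\approx t^{n}$, which fails, e.g., near an outward cusp. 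Accordingly the paper proves, for \emph{every} measurable $S\subset\rn$, only the one-sided embedding $N^s_{p,q}(S)\subset B^s_{p,q}(S)$ with $\|u\|_{B^s_{p,q}(S)}\le C\|u\|_{N^s_{p,q}(S)}$ (Theorem \ref{relation of besov spaces2}), and the equality only under \eqref{measure density}. Your fallback remark that under measure density the equivalence is ``immediate from Theorem \ref{m extension} together with the $\rn$ identification'' is also circular for the inclusion $B^s_{p,q}(\Omega)\subset N^s_{p,q}(\Omega)$: Theorem \ref{m extension} extends Haj\l asz--Besov functions, so to see that an intrinsically defined $u\in B^s_{p,q}(\Omega)$ is a restriction of a function in $B^s_{p,q}(\rn)$ you would already need the $B^s_{p,q}$-extension property being proved; the correct input is the proof of \cite[Thm 2.1]{GKZ} on measure-density (locally doubling) sets, as invoked in Theorem \ref{relation of besov spaces2}.

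As written, your step $(3)\Rightarrow(1)$ therefore rests on an unproved identification. The repair needs no two-sided estimate on a general $\Omega$ and is exactly the paper's arrangement: for $(3)\Rightarrow(1)$ use only the embedding $N^s_{p,q}(\Omega)\hookrightarrow B^s_{p,q}(\Omega)$ valid for all measurable sets (obtained by applying the pointwise inequality directly to pairs $x,x+h\in\Omega$ with $2^{-j-1}\le|h|<2^{-j}$ and Lemma \ref{summing lemma}; no integration along segments inside $\Omega$ is involved, nor possible in general) together with $B^s_{p,q}(\rn)=N^s_{p,q}(\rn)$: a bounded extension $B^s_{p,q}(\Omega)\to B^s_{p,q}(\rn)$ then restricts to a bounded extension $N^s_{p,q}(\Omega)\to N^s_{p,q}(\rn)$, and Theorem \ref{measure density from extension} yields \eqref{measure density}. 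For $(1)\Rightarrow(2)$ use the equality $N^s_{p,q}(\Omega)=B^s_{p,q}(\Omega)$, which does hold under \eqref{measure density}, together with Theorem \ref{m extension}. With this rearrangement your argument coincides with the paper's proof of Theorems \ref{relation of besov spaces2} and \ref{ext B measure density rn}.
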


Extension problems are closely related to the question of intrinsic characterization of spaces of fractional order of smoothness 
on subsets $S\subset\rn$.
The obtained results shows that if $S$ satisfies the measure density condition, then the space $B^s_{p,q}(S)$, $0<s<1$, 
$0<p<\infty$, $0<q\le\infty$, can be defined via the $L^p$-modulus of smoothness, via pointwise inequalities, in terms of an 
atomic decomposition, see \cite{Tr3} for the details on the last-mentioned approach; all these definitions would lead to the 
same space of functions which is the trace space of the classical Besov space $B^s_{p,q}(\rn)$.

We also give an analogue of Theorem \ref{ExtensionEuclideanCase} for certain Triebel--Lizorkin spaces, see Theorem \ref{ext 
TL measure density}.
Note that there are several approaches to define Triebel--Lizorkin spaces on domains, which, in general, give different spaces.
In Theorem \ref{ext TL measure density}, we use a definition in the spirit of the classical definition of $F^{s}_{p,q}(\rn)$ via 
differences; it describes, for example, the trace space of $F^{s}_{p,q}(\rn)$ to a regular subset of the Euclidean space. Another 
version of Triebel--Lizorkin type spaces on domains was introduced in \cite{Se} and \cite{Mi}; a characterization of extension 
domains for these spaces can be given similarly to the one for the Sobolev spaces in \cite{HKT_JFA}, see 
Theorem \ref{ext Cspq spaces measure density} and Remark \ref{rm:IntrinsicDefinitionsTriebel-Lizorkin}.

\medskip
The paper is organized as follows.
In Section \ref{preliminaries}, we introduce the notation and the standard assumptions used in the paper and give the 
definitions of Haj\l asz--Besov spaces and Haj\l asz--Triebel--Lizorkin spaces.
In Section \ref{section: lemmas}, we present some auxiliary lemmas needed in the proof of our extension results.
In Section \ref{section: interpolation}, we prove interpolation and embedding theorems for Besov spaces.
Section \ref{section: extensions} is devoted to the proof of Theorem \ref{m extension}.
In Section \ref{section: measure density from extension}, we show that Haj\l asz--Besov and  Haj\l asz--Triebel--Lizorkin 
extension domains satisfy measure density property.
In the last Section \ref{Rn}, we discuss the Euclidean case.

\section{Notation and preliminaries}\label{preliminaries}
We assume that $X=(X, d,\mu)$ is a metric measure space equipped with a metric $d$ and a Borel regular,
doubling outer measure $\mu$, for which the measure of every ball is positive and finite.
The {\em doubling} property means that there exists a fixed constant $c_D>0$, called  {\em the doubling constant}, such that
\begin{equation}\label{doubling measure}
\mu(B(x,2r))\le c_D\mu(B(x,r))
\end{equation}
for every ball $B(x,r)=\{y\in X:d(y,x)<r\}$.

The doubling condition gives an upper bound for the dimension of $X$ since
it implies that there is a constant $C=C(c_D)>0$ such that for $Q=\log_2c_D$,
\begin{equation}\label{doubling dimension}
\frac{\mu(B(y,r))}{\mu(B(x,R))}\ge C\Big(\frac rR\Big)^Q
\end{equation}
for every $0<r\le R$ and $y\in B(x,R)$.

As a special case of doubling spaces we consider $Q$-regular spaces.
The space $X$ is  {\em $Q$-regular}, $Q\ge1$, if there is a constant $c_Q\ge1$ such that
\begin{equation}\label{Q-regular}
c_Q^{-1}r^Q\le \mu(B(x,r))\le c_Qr^Q
\end{equation}
for each $x\in X$, and for all $0<r\le\operatorname {diam}X.$ Here $\operatorname{diam} X$ is the diameter of $X$. When we 
assume $X$ to be doubling, then $Q$ refers to \eqref{doubling dimension}, and if $X$ is $Q$-regular, then $Q$ comes from 
\eqref{Q-regular}.

A metric space $X$ is  {\em geodesic} if every two points $x,y\in X$ can be joined by a curve whose length equals $d(x,y)$.

By saying that a measurable function $u\colon X\to[-\infty,\infty]$ is  {\em locally integrable}, we mean that is integrable on 
balls.
Similarly, the class of functions that belong to $L^p(B)$, $p>0$, in all balls $B$, is denoted by $L^p_{\text{loc}}(X)$.
The {\em integral average} of a locally integrable function $u$ over a measurable set $A$ with $0<\mu(A)<\infty$ is
\[
u_A=\vint{A}u\,d\mu=\frac1{\mu(A)}\int_Au\,d\mu.
\]
The  {\em Hardy--Littlewood maximal function} of a locally integrable function $u$ is
\[
\M u(x)=\sup_{0<r<\infty}\;\vint{B(x,r)}|u|\,d\mu.
\]
By $\ch{E}$, we denote the characteristic function of a set $E\subset X$, and by $\|u\|_\infty$, the $L^\infty$-norm of $u$.
The Lebesgue measure of a measurable set $A\subset\rn$ is denoted by $|A|$.
In general, $C$ is a positive constant whose value is not necessarily the same at each occurrence.
When we want to stress that $C$ depends on the other constants or parameters $a,b,\dots$, we write $C=C(a,b,\dots)$.
If there is a positive constant $C_1$ such that $C_1^{-1}A\le B\le C_1A$, we say that $A$ and $B$ are comparable, and write 
$A\approx B$.

\subsection{Haj\l asz--Besov and Haj\l asz--Triebel--Lizorkin spaces}\label{section TL}
Besov and Triebel--Lizorkin spaces are certain generalizations of Sobo\-lev spaces to the case of fractional order of 
smoothness.
There are several ways to define these spaces in the Euclidean setting and spaces of Besov type and of Triebel--Lizorkin type 
in the setting of a metric space equipped with a doubling measure. For various definitions in a metric measure setting, see 
\cite{GKS}, \cite{GKZ}, \cite{HMY}, \cite{KYZ}, \cite{MY}, \cite{SYY}, \cite{YZ} and the references therein.
In this paper, we mainly use the approach based on pointwise inequalities, introduced in \cite{KYZ}.
An advantage of the pointwise definition is that it provides a simple way to intrinsically define function spaces on subsets.

\begin{definition}
Let $S\subset X$ be a measurable set and let $0<s<\infty$.
A sequence of nonnegative measurable functions $(g_k)_{k\in\z}$ is a  {\em fractional $s$-gradient} of a measurable function
$u\colon S\to[-\infty,\infty]$ in $S$, if there exists a set $E\subset S$ with $\mu(E)=0$ such that
\[
|u(x)-u(y)|\le d(x,y)^s(g_k(x)+g_k(y))
\]
for all $k\in\z$ and all $x,y\in S\setminus E$ satisfying $2^{-k-1}\le d(x,y)<2^{-k}$.
The collection of all fractional $s$-gradients of $u$ is denoted by $\D^s(u)$.
\end{definition}

For $0<p,q\le\infty$ and a sequence $\vec f=(f_k)_{k\in\z}$ of measurable
functions, we define
\[
\big\|(f_k)_{k\in\z}\big\|_{L^p(S,\,l^q)}
=\big\|\|(f_k)_{k\in\z}\|_{l^q}\big\|_{L^p(S)}
\]
and
\[
\big\|(f_k)_{k\in\z}\big\|_{l^q(L^p(S))}
=\big\|\big(\|f_k\|_{L^p(S)}\big)_{k\in\z}\big\|_{l^q},
\]
where
\[
\big\|(f_k)_{k\in\z}\big\|_{l^{q}}
=
\begin{cases}
\big(\sum_{k\in\z}|f_{k}|^{q}\big)^{1/q},&\quad\text{when }0<q<\infty, \\
\;\sup_{k\in\z}|f_{k}|,&\quad\text{when }q=\infty.
\end{cases}
\]

\begin{definition}
Let $S\subset X$ be a measurable set. Let $0<s<\infty$ and let $0<p,q\le\infty$.
The  {\em homogeneous Haj\l asz--Triebel--Lizorkin space} $\dot M_{p,q}^s(S)$ consists of measurable functions
$u\colon S\to[-\infty,\infty]$, for which the (semi)norm
\[
\|u\|_{\dot M_{p,q}^s(S)}
=\inf_{\vec{g}\in\D^s(u)}\|\vec{g}\|_{L^p(S,\,l^q)}
\]
is finite. The  {\em (inhomogeneous) Haj\l asz--Triebel--Lizorkin space} $M_{p,q}^s(S)$ is $\dot M_{p,q}^s(S)\cap L^p(S)$
equipped with the norm
\[
\|u\|_{M_{p,q}^s(S)}=\|u\|_{L^p(S)}+\|u\|_{\dot M_{p,q}^s(S)}.
\]
Similarly, the  {\em homogeneous Haj\l asz--Besov space} $\dot N_{p,q}^s(S)$ consists of measurable functions
$u\colon S\to[-\infty,\infty]$, for which
\[
\|u\|_{\dot N_{p,q}^s(S)}=\inf_{(g_k)\in\D^s(u)}\|(g_k)\|_{l^q(L^p(S))}
\]
is finite, and the {\em Haj\l asz--Besov space} $N_{p,q}^s(S)$ is $\dot N_{p,q}^s(S)\cap L^p(S)$
equipped with the norm
\[
\|u\|_{N_{p,q}^s(S)}=\|u\|_{L^p(S)}+\|u\|_{\dot N_{p,q}^s(S)}.
\]
\end{definition}
When $0<p<1$, the (semi)norms defined above are actually quasi-(semi)norms, but for simplicity we call them, as well as 
other quasi-seminorms in this paper, just norms.

\begin{remark}\label{norm sum N}
Observe that for inhomogeneous Haj\l asz--Triebel--Lizorkin and Haj\l asz--Besov spaces the norms defined above are 
equivalent to
\[
\|u\|_{L^p(S)}+\inf_{\vec{g}\in\D^s(u)}\|(g_k)_{k\in\n}\|_{L^p(S,\,l^q)}\quad \text{and}\quad
\|u\|_{L^p(S)}+\inf_{\vec{g}\in\D^s(u)}\|(g_k)_{k\in\n}\|_{l^q(S,\,L^p)}
\]
respectively, that is, it is enough to take into account only the coordinates of $\vec{g}$ with positive indices.
Indeed, if $x,y\in S\setminus E$ and $2^{-k-1}\le d(x,y)<2^{-k}$ with $k\le 0$, then
\[
|u(x)-u(y)|
\le |u(x)|+|u(y)|
\le 2^{(k+1)s} d(x,y)^s(|u(x)|+|u(y)|).
\]
Hence, if $(g_k)_{k\in\z}\in \D^s(u)$, then $(g'_k)_{k\in\z}$, where $g'_k=g_k$ for $k>0$ and $g'_k=2^{(k+1)s}|u|$ for $k\le 0$, 
belongs to $\D^s(u)$.
Calculating the norm, for example, for Haj\l asz--Triebel--Lizorkin space, we obtain that
\[
\begin{split}
\|\vec{g'}\|_{L^p(S,\,l^q)}
&\le C\big(\|(g'_k)_{k\in\n}\|_{L^p(S,\,l^q)}+\|(g'_k)_{k\le 0}\|_{L^p(S,\,l^q)}\big)\\
&= C\|(g_k)_{k\in\n}\|_{L^p(S,\,l^q)}+ C\|u\|_{L^p(S)}\Big(\sum_{k=-\infty}^{0}2^{(k+1)sq}\Big)^{1/q},
\end{split}
\]
where the constants $C$ depend on $p$ and $q$ only. 
This implies that
\[
\inf_{\vec{g}\in\D^s(u)}\|\vec{g}\|_{L^p(S,\,l^q)}
\le C\big(\inf_{\vec{g}\in\D^s(u)}\|(g_k)_{k\in\n}\|_{L^p(S,\,l^q)}+\|u\|_{L^p(S)}\big).
\]
\end{remark}

If $X$ supports a weak $(1,p)$-Poincar\'e inequality with $p\in (1,\infty)$, then for all $q\in(0,\infty)$,
the spaces $M^1_{p,q}(X)$ and $N^1_{p,q}(X)$ are trivial, that is, they contain
only constant functions, see \cite[Thm 4.1]{GKZ}.
\medskip

The definitions formulated above are, in particular, motivated by the Haj\l asz's approach to the definition of Sobolev spaces
$M^{1,p}(X)$ on a metric measure space; see \cite{H} and \cite{H2}.
The fractional spaces $M^{s,p}(X)$ were introduced in \cite{Y}, and were studied, for example, in \cite{Hu} and \cite{HKT}.

\begin{definition}
Let $S\subset X$ be a measurable set. Let $s\ge 0$ and let $0<p<\infty$.
A nonnegative measurable function $g$ is an  {\em $s$-gradient} of a measurable function $u$ in $S$ if there exists a set
$E\subset S$ with $\mu(E)=0$ such that for all $x,y\in S\setminus E$,
\begin{equation}\label{eq: gradient}
|u(x)-u(y)|\le d(x,y)^s(g(x)+g(y)).
\end{equation}
The collection of all $s$-gradients of $u$ is denoted by $\mathcal{D}^s(u)$ and the $1$-gradients shortly by $\mathcal{D}(u)$.
The {\em homogeneous Haj\l asz space} $\dot{M}^{s,p}(S)$ consists of measurable functions $u$ for which
\[
\|u\|_{\dot M^{s,p}(S)}=\inf_{g\in\mathcal{D}^s(u)}\|g\|_{L^p(S)}
\]
is finite.
The {\em Haj\l asz space} $M^{s,p}(S)$ is $\dot M^{s,p}(S)\cap L^p(S)$  equipped with the norm
\[
\|u\|_{M^{s,p}(S)}=\|u\|_{L^p(S)}+\|u\|_{\dot M^{s,p}(S)}.
\]
\end{definition}
Recall that for $p>1$, $M^{1,p}(\rn)=W^{1,p}(\rn)$ \cite{H}, whereas for $n/(n+1)<p\le 1$,  $M^{1,p}(\rn)$ coincides with the 
Hardy--Sobolev space $H^{1,p}(\rn)$ \cite[Thm 1]{KS}.
Notice also that $M^{0,p}(X)=L^p(X)$ and that $M^{s,p}(X)$ coincides with
the Haj\l asz--Triebel--Lizorkin space $M_{p,\infty}^s(X)$, see \cite[Prop.\ 2.1]{KYZ} for a simple proof of this fact.

\subsection {On different definitions of Besov and Triebel--Lizorkin spaces}\label{section: different definitions}
In the Euclidean setting the most common ways to define Besov and Triebel--Lizorkin spaces, 
via the $L^p$-modulus of smoothness (differences) and by the Fourier analytic approach, lead to the same spaces of 
functions 
with comparable norms when $p>n/(n+s)$ in the Besov case and when $p,q>n/(n+s)$ in the Triebel--Lizorkin case.
See, for example, \cite[Chapter 2.5]{Tr} and \cite{HS}.

The space $M^s_{p,q}(\rn)$ given by the metric definition coincides with Triebel--Lizor\-kin space ${\bf F}^s_{p,q}(\rn)$, 
defined via the Fourier analytic approach, when $s\in (0,1)$, $p\in(n/(n+s),\infty)$ and $q\in(n/(n+s),\infty]$, and
$M^1_{p,\infty}(\rn)=M^{1,p}(\rn)={\bf F}^{1}_{p,2}(\rn)$, when $p\in(n/(n+1),\infty)$.
Similarly, $N_{p,q}^s(\rn)$ coincides with Besov space ${\bf B}^s_{p,q}(\rn)$ for $s\in (0,1)$, $p\in(n/(n+s),\infty)$ and $q
\in(0,\infty]$, see \cite[Thm 1.2 and Remark 3.3]{KYZ}. For the definitions of ${\bf F}^s_{p,q}(\rn)$ and
${\bf B}^s_{p,q}(\rn)$, we refer to \cite{Tr}, \cite{Tr2}, \cite[Section 3]{KYZ}.

\subsection{Modulus of smoothness and Besov spaces}\label{section: mod of smooth}
In addition to the definition based on pointwise inequalities, we will sometimes use a generalization to the metric setting of
the classical definition of the Besov spaces via the $L^p$-modulus of smoothness; this general version was introduced in 
\cite{GKS}.
\smallskip

Recall that the {\em $L^p$-modulus of smoothness} of a function $u\in L^p(\rn)$ is
\begin{equation}\label{omega smoothness}
\omega(u,t)_p=\sup_{|h|\le t}\|\Delta_h(u,\cdot)\|_{L^p(\rn)},
\end{equation}
where $t>0$ and $\Delta_h(u,x)=u(x+h)-u(x)$.
For $0<s<\infty$ and $0<p,q<\infty$, the Besov space $B_{p,q}^s(\rn)$ consists of functions $u\in L^p(\rn)$ for which
\[
\|u\|_{B^s_{p,q}(\rn)}
=\|u\|_{L^p(\rn)}+\bigg(\int_0^1\big(t^{-s}\omega(u,t)_p\big)^{q}\frac{dt}{t}\bigg)^{1/q}
\]
is finite (with the usual modifications when $p=\infty$ or $q=\infty$). Note that
the integral over the interval $(0,1)$ can be replaced by the integral over $(0,\infty)$, since $\omega(u,t)_p\le C\|u\|_{L^p(\rn)}
$.

Following \cite{GKS} and \cite{GKZ}, we define a modulus of smoothness which does not rely on the group structure of the 
underlying space and which, for a function $u\in L^p(\rn)$, is comparable with $\omega(u,t)_p$.

\begin{definition}\label{B global}
Let $t>0$, $0<s<\infty$ and $0<p,q<\infty$.  Let
\begin{equation}\label{eput}
E_p(u,t)=\Big(\int_X\vint{B(x,t)}|u(x)-u(y)|^p\,d\mu(y)d\mu(x)\Big)^{1/p}.
\end{equation}
The {\em homogeneous Besov space} $\dot{\cB}_{p,q}^s(X)$ consists of functions
$u\in L^p_{\text{loc}}(X)$ for which
\[
\|u\|_{\dot{\cB}^s_{p,q}(X)}
=\bigg(\int_0^\infty\big(t^{-s}E_p(u,t)\big)^{q}\frac{dt}{t}\bigg)^{1/q}
\]
is finite (with the usual modification when  $q=\infty$).
The {\em Besov space} ${\cB}_{p,q}^s(X)$ is $\dot{\cB}_{p,q}^s(X)\cap L^p(X)$ with the norm
\[
\|u\|_{{\cB}^s_{p,q}(X)}=\|u\|_{L^p(X)}+\|u\|_{\dot{\cB}^s_{p,q}(X)}.
\]
\end{definition}
By the comparability of $\omega(u,t)_p$ and $E_p(u,t)$, the space ${\cB}^s_{p,q}(\rn)$ coincides with the classical space 
$B^s_{p,q}(\rn)$.
By \cite[Thm 1.2]{GKZ}, $\dot N^s_{p,q}(X)=\dot{\cB}_{p,q}^s(X)$ for all $0<s<\infty$ and $0<p,q\le\infty$, and
\begin{equation}\label{equivalence of Besov norms}
\|u\|_{\dot N^s_{p,q}(X)}
\approx\|u\|_{\dot{\cB}^s_{p,q}(X)}.
\end{equation}
As above, the integral over the interval $(0,\infty)$ in the norm $\|u\|_{\cB^s_{p,q}(X)}$ can be replaced by the integral over 
$(0,1)$.

It also follows by the results in \cite{GKZ} that, for $0<s<1$, $0<p,q\le\infty$, the Haj\l asz--Triebel--Lizorkin space $M^s_{p,q}
(\rn)$ coincides
with the classical Triebel--Lizorkin space ${F}^s_{p,q}(\rn)$ defined using differences.
This space consists of functions $u\in L^p(\rn)$, for which the norm
\[
\|u\|_{{F}^s_{p,q}(\rn)}
=\|u\|_{L^p(\rn)}+\|g\|_{L^p(\rn)},
\]
where
\[
\begin{split}
g(x)
=&\bigg(\int_0^1\bigg(t^{-s}\Big(\,\vint{B(0,t)}
|u(x+h)-u(x)|^r\,dh\Big)^{1/r}\bigg)^q\,\frac{dt}{t}\bigg)^{1/q}\\
\end{split}
\]
and $0<r<\min\{p,q\}$, is finite.

\section{Lemmas}\label{section: lemmas}
This section contains lemmas needed in the proofs of the extension results.

Below we will frequently use the following simple inequality, which holds whenever $a_i\ge 0$ for all $i\in\z$ and $0<p\le 1$,
\begin{equation}\label{a sum}
\big(\sum _{i\in\z}a_i\big)^p\le\sum_{i\in\z} a_i^p.
\end{equation}

The first lemma is used to estimate the norms of fractional gradients.
\begin{lemma}\label{summing lemma}
Let $1<a<\infty$, $0<b<\infty$ and $c_k\ge 0$, $k\in\z$. There is a constant $C=C(a,b)$ such that
\[
\sum_{k\in\z}\Big(\sum_{j\in\z}a^{-|j-k|}c_j\Big)^b\le C\sum_{j\in\z}c_j^b.
\]
\end{lemma}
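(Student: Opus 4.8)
The plan is to prove the estimate by splitting the inner sum over $j$ according to whether $j\le k$ or $j>k$, and then summing a discrete convolution against the geometric kernel $a^{-|j-k|}$ in two regimes depending on whether $b\ge 1$ or $0<b<1$.

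First I would observe that it suffices to bound
\[
S:=\sum_{k\in\z}\Big(\sum_{j\in\z}a^{-|j-k|}c_j\Big)^b.
\]
\textbf{Case $b\ge 1$.} Here I would use Jensen's inequality (or H\"older) with respect to the finite-mass kernel $w_{kj}=a^{-|j-k|}$. Since $\Lambda:=\sum_{j\in\z}a^{-|j-k|}=\frac{a+1}{a-1}<\infty$ is independent of $k$, writing $\sum_j w_{kj}c_j=\Lambda\sum_j \frac{w_{kj}}{\Lambda}c_j$ and applying Jensen to the probability measure $w_{kj}/\Lambda$ gives
\[
\Big(\sum_{j\in\z}a^{-|j-k|}c_j\Big)^b
\le \Lambda^{b-1}\sum_{j\in\z}a^{-|j-k|}c_j^b.
\]
Summing over $k$ and exchanging the order of summation yields $S\le \Lambda^{b-1}\sum_j c_j^b\sum_k a^{-|j-k|}=\Lambda^{b}\sum_j c_j^b$, which is the claim with $C=\Lambda^b=\big((a+1)/(a-1)\big)^b$.

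\textbf{Case $0<b<1$.} Now I would use the elementary inequality \eqref{a sum}: $\big(\sum_j a^{-|j-k|}c_j\big)^b\le \sum_j a^{-b|j-k|}c_j^b$. Summing over $k$ and interchanging the order of summation gives
\[
S\le \sum_{j\in\z}c_j^b\sum_{k\in\z}a^{-b|j-k|}=\Big(\sum_{m\in\z}a^{-b|m|}\Big)\sum_{j\in\z}c_j^b,
\]
and $\sum_{m\in\z}a^{-b|m|}=\frac{a^b+1}{a^b-1}<\infty$ since $a^b>1$. This gives the claim with $C=(a^b+1)/(a^b-1)$.

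Combining the two cases, the constant $C=C(a,b)$ can be taken as $\max\{(a+1)^b/(a-1)^b,\ (a^b+1)/(a^b-1)\}$, finishing the proof. The only mild subtlety — hardly an obstacle — is making sure the geometric series $\sum_{m\in\z}a^{-|m|}$ and $\sum_{m\in\z}a^{-b|m|}$ converge, which is exactly where the hypotheses $a>1$ and $b>0$ are used; everything else is Jensen in one case and the subadditivity inequality \eqref{a sum} in the other, followed by Fubini for nonnegative terms.
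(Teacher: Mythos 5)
Your proof is correct and follows essentially the same route as the paper: for $b\ge 1$ the paper uses H\"older's inequality for series (your Jensen argument with the normalized kernel is the same estimate), and for $0<b<1$ both use inequality \eqref{a sum}, followed by interchanging the order of summation and summing the geometric series. The only difference is that you track the explicit constants, which the paper does not bother to do.
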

\begin{proof}
If $b\ge 1$, then the H\"older's inequality for series implies that
\[
\Big(\sum_{j\in\z}a^{-|j-k|}c_j\Big)^b\le C\sum_{j\in\z}a^{-|j-k|}c_j^b.
\]
If $0<b< 1$, then, by \eqref{a sum},
\[
\Big(\sum_{j\in\z}a^{-|j-k|}c_j\Big)^b\le \sum_{j\in\z}a^{-b|j-k|}c_j^b.
\]
Thus, denoting $\tilde b=\min\{b,1\}$, we obtain
\[
\sum_{k\in\z}\Big(\sum_{j\in\z}a^{-|j-k|}c_j\Big)^b
\le C\sum_{k\in\z}\sum_{j\in\z}a^{-\tilde b|j-k|}c_j^b
\le C\sum_{j\in\z} c_j^b\sum_{k\in\z} a^{-\tilde b|j-k|},
\le C\sum_{j\in\z} c_j^b,
\]
which proves the claim.
\end{proof}

Next we recall the Poincar\'e type inequalities which are valid for functions and fractional gradients,
give a definition of median values and list some of their properties and obtain certain norm estimates for Lipschitz functions.

\subsection{Poincar\'e type inequalities}
The definition of the fractional $s$-gradient implies the validity of some Sobolev--Poincar\'e type inequalities.
A similar reasoning as in the proof of \cite[Lemma 2.1]{KYZ} in $\rn$ gives our first inequality.

\begin{lemma}\label{lemma: Poincare}
Let $0<s<\infty$. Let $u$ be a locally integrable function and let $(g_j)\in \D^s(u)$.
Then, for every $x\in X$ and $k\in\z$,
\begin{equation}\label{eq: Poincare}
\begin{split}
\inf_{c\in \re}\, \vint{B(x,2^{-k})}|u-c|\,d\mu
\le C2^{-ks}\sum_{j= k-3}^k\;\vint{B(x,2^{-k+2})}g_j\,d\mu.
\end{split}
\end{equation}
\end{lemma}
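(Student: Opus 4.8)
The plan is to bound $\inf_{c\in\re}\vint{B(x,2^{-k})}|u-c|\,d\mu$ by testing the infimum with one well-chosen constant, namely the integral average of $u$ over an annulus centred at $x$ all of whose points lie at distance comparable to $2^{-k}$ from every point of $B(x,2^{-k})$. Restricting $y$ and $z$ to such a configuration is precisely what makes only the coordinates $g_{k-3},\dots,g_k$ of the fractional gradient appear.

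Set $r=2^{-k}$, $B=B(x,r)$, $B'=B(x,4r)=B(x,2^{-k+2})$ and $A=B'\setminus B(x,2r)$. Choosing $c=u_A$ in the infimum and using $|u(y)-u_A|\le\vint{A}|u(y)-u(z)|\,d\mu(z)$ together with Fubini,
\[
\inf_{c\in\re}\vint{B}|u-c|\,d\mu\le\vint{B}|u-u_A|\,d\mu\le\vint{B}\vint{A}|u(y)-u(z)|\,d\mu(z)\,d\mu(y).
\]
Now if $y\in B$ and $z\in A$, the triangle inequality gives $r<d(x,z)-d(x,y)\le d(y,z)\le d(x,y)+d(x,z)<5r$, so the integer $j$ determined by $2^{-j-1}\le d(y,z)<2^{-j}$ lies in $\{k-3,k-2,k-1\}$. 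Hence, for a.e.\ such $y,z$ (those outside the exceptional null set in the definition of $\vec g\in\D^s(u)$) and because the $g_i$ are nonnegative,
\[
|u(y)-u(z)|\le d(y,z)^s\big(g_j(y)+g_j(z)\big)\le 5^s2^{-ks}\sum_{i=k-3}^{k}\big(g_i(y)+g_i(z)\big).
\]

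Substituting this in, the $z$--average of the term in $y$ equals $\vint{B}g_i\,d\mu$ and the $y$--average of the term in $z$ equals $\vint{A}g_i\,d\mu$, so the statement reduces to the two estimates $\vint{B}g_i\,d\mu\le C\vint{B'}g_i\,d\mu$ and $\vint{A}g_i\,d\mu\le C\vint{B'}g_i\,d\mu$. The first is immediate from $B\subset B'$ and $\mu(B')\le c_D^{2}\mu(B)$. For the second, writing $\vint{A}g_i\,d\mu\le(\mu(B')/\mu(A))\vint{B'}g_i\,d\mu$, one needs a lower bound $\mu(A)\ge c\,\mu(B')$ on the annulus.

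That last bound is the only point that is not mere bookkeeping. In $\rn$ it is obvious; in a general doubling space it is a reverse-doubling-type property, available for instance once $X$ is connected (hence in the geodesic setting of the main theorems) and $B'\neq X$, and in the remaining degenerate situations one shifts the annulus to another, still $O(2^{-k})$, scale still contained in $B(x,2^{-k+2})$. Granting it, one combines the two displays with these estimates and obtains \eqref{eq: Poincare} with $C=C(s,c_D)$.
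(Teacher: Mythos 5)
Your approach --- test the infimum with the average of $u$ over an annulus at scale $\approx 2^{-k}$, so that only the dyadic slots $j\in\{k-3,\dots,k\}$ of the fractional gradient are ever activated --- is essentially the same device used in the proof of \cite[Lemma 2.1]{KYZ} in $\rn$, to which the paper simply defers without reproducing an argument, and you have correctly isolated the one nontrivial ingredient: the lower mass bound $\mu(A)\gtrsim\mu(B')$ for the annulus $A=B(x,4r)\setminus B(x,2r)$.

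The gap is that your proposed fix for the degenerate case, namely to ``shift the annulus to another, still $O(2^{-k})$, scale still contained in $B(x,2^{-k+2})$'', cannot be carried out: the obstruction is not a poor choice of radii but the possible absence of \emph{any} set of positive measure at the right distance from $x$. With only doubling assumed, inequality \eqref{eq: Poincare} is in fact false. Take $X=\{a,b\}$ with $d(a,b)=2^{-k-2}$ and counting measure (trivially doubling), $u(a)=0$, $u(b)=1$. The only dyadic scale that the definition of $\D^s(u)$ ever tests is $j=k+1$, so the sequence with $g_{k+1}\equiv 2^{(k+2)s}/2$ and $g_j\equiv 0$ for all $j\neq k+1$ belongs to $\D^s(u)$; but then the left-hand side of \eqref{eq: Poincare} with centre $a$ and radius $2^{-k}$ equals $1/2$, while the right-hand side is $0$ because $g_{k-3}=\dots=g_{k}\equiv 0$. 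Thus no placement of the annulus can rescue the finite sum $\sum_{j=k-3}^{k}$: one genuinely needs a structural hypothesis --- connectedness or geodesity (which is available everywhere the paper actually invokes this lemma), or a uniform perfectness/reverse-doubling condition --- to guarantee that a set of points at distance comparable to $2^{-k}$ from $x$ carries a definite fraction of the mass of $B(x,2^{-k+2})$. You should state such a hypothesis explicitly rather than absorb it into a remark about ``degenerate situations''; once that is done, the rest of your argument is correct and complete.
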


Note that we will apply Lemma \ref{lemma: Poincare} for functions in $N^{s}_{p,q}(X)$ with $sp>Q$ and with $sp=Q$, and for 
these values of parameters functions in $N^{s}_{p,q}(X)$ are locally integrable.

\begin{lemma}[\cite{GKZ}, Lemma 2.1]\label{lemma:Sobolev-Poincare 2}
Let $0<s<\infty$ and $0<t<Q/s$.
Then for every $\eps$ and $\eps'$ with $0<\eps<\eps'<s$, there exists
a constant $C>0$ such that for all measurable functions $u$ with $(g_j)\in \D^s(u)$, $x\in X$
and $k\in\z$,
\begin{equation}\label{Sobolev-Poincare 2}
\inf_{c\in\re}\Big(\,\vint{B(x,2^{-k})}|u(y)-c|^{t^*(\eps)}\,d\mu(y)\Big)^{1/t^*(\eps)}
\le C2^{-k\eps'}\sum_{j\ge k-2}2^{-j(s-\eps')}
\Big(\,\vint{B(x,2^{-k+1})}g_j^{t}\,d\mu\Big)^{1/t},
\end{equation}
where $t^*(\eps)=Qt/(Q-\eps t)$.
\end{lemma}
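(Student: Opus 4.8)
The plan is to reduce the estimate to a pointwise inequality obtained by a dyadic chaining argument, and then pass to the $L^{t^*(\eps)}$-average. Fix $x\in X$ and $k\in\z$, write $B=B(x,2^{-k})$ and $\hat B=B(x,2^{-k+1})$, and assume without loss of generality that the right-hand side is finite, so that each $g_j\chi_{\hat B}\in L^t$. By replacing $u$ with its truncations $\max\{-N,\min\{N,u\}\}$, which keep $(g_j)$ as a fractional $s$-gradient since truncation does not increase differences, I would further assume $u$ bounded (hence $u\in L^1_{\text{loc}}\cap L^{t^*(\eps)}(\hat B)$), and let $N\to\infty$ at the end; the constant is chosen to be $c=u_{\hat B}$.

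For the pointwise bound, fix a Lebesgue point $y\in B$ of $u$ and set $B_i=B(y,2^{-k-i})$, $i\ge0$, so that $B_i\subset\hat B$ and $u(y)=\lim_i u_{B_i}$. Telescoping and the doubling property give
\[
|u(y)-u_{B_0}|\le\sum_{i\ge0}|u_{B_{i+1}}-u_{B_i}|\le C\sum_{i\ge0}\;\vint{B_i}\vint{B_i}|u(z)-u(w)|\,d\mu(w)\,d\mu(z).
\]
For $z,w\in B_i$ one has $d(z,w)<2^{-k-i+1}$, so the definition of a fractional $s$-gradient gives $|u(z)-u(w)|\le\sum_{m\ge k+i-1}2^{-ms}(g_m(z)+g_m(w))$; inserting this, using $\vint{A}h\,d\mu\le\big(\vint{A}h^{t}\,d\mu\big)^{1/t}$ for every ball $A$, bounding averages over $B_i$ by the restricted maximal function $\mathcal M^{*}h(y):=\sup_{0<r\le 2^{-k+1}}\vint{B(y,r)}|h|\,d\mu$, and exchanging the order of summation, I would obtain
\[
|u(y)-u_{B_0}|\le C\sum_{j\ge k-1}(j-k+3)\,2^{-js}\big(\mathcal M^{*}(g_j^{t}\chi_{\hat B})(y)\big)^{1/t}.
\]
A similar but $y$-independent estimate controls $|u_{B_0}-u_{\hat B}|$ by a constant times $\sum_{j\ge k-2}2^{-js}\big(\vint{\hat B}g_j^{t}\,d\mu\big)^{1/t}$. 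Since $(j-k+3)\le C(\eps')\,2^{(j-k)\eps'}$ for $j\ge k-2$, the polynomial factor is absorbed at the cost of replacing $s$ by $s-\eps'$ and producing the factor $2^{-k\eps'}$, so that for a.e.\ $y\in B$,
\[
|u(y)-u_{\hat B}|\le C\,2^{-k\eps'}\sum_{j\ge k-2}2^{-j(s-\eps')}\big(\mathcal M^{*}(g_j^{t}\chi_{\hat B})(y)\big)^{1/t}.
\]

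The hard part is the final step: raising this to the power $t^*(\eps)$ and integrating over $B$. The required improvement of integrability, from $g_j\in L^t$ to $u-u_{\hat B}\in L^{t^*(\eps)}$, cannot be read off from the pointwise estimate, because the maximal operator is not bounded on $L^1$; here the hypothesis $t<Q/s$ (which forces $\eps t<Q$, i.e.\ $t^*(\eps)<\infty$, for every $0<\eps<\eps'<s$) and a truncation argument in the spirit of Maz'ya are essential. I would first use the weak $(1,1)$ bound for $\mathcal M^{*}$ applied to $g_j^{t}\chi_{\hat B}\in L^1$, together with the geometric summability of the weights $2^{-j(s-\eps')}$, to see that the right-hand side of the last display lies in the weak space $L^{t,\infty}(B)$ with the correct quasinorm, which is a weak $(t,t)$-version of the asserted inequality. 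To upgrade it to the strong $(t,t^*(\eps))$-estimate, I would apply the weak-type inequality not to $u-u_{\hat B}$ itself but to its truncations between consecutive dyadic levels $2^{\ell}$ and $2^{\ell+1}$ and sum the resulting geometric series over $\ell$; this is the classical Maz'ya truncation scheme, and in the present generality the details are carried out in \cite{GKZ}. Finally, letting $N\to\infty$ (Fatou's lemma on the left) removes the auxiliary boundedness assumption and completes the proof.
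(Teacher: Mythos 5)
Note first that the paper does not prove this lemma at all: it is quoted verbatim as \cite[Lemma 2.1]{GKZ}, so the only "proof" in the paper is the citation. Your sketch attempts an actual proof, but it has two genuine gaps. The first is the step ``$\vint{A}h\,d\mu\le\big(\vint{A}h^{t}\,d\mu\big)^{1/t}$ for every ball $A$'': this holds only for $t\ge 1$ and is false in general for $0<t<1$, which is exactly the regime the lemma is needed for in this paper (it is applied with $t=Q/(Q+\eps)<1$ and with $t<\min\{p,q\}$ where $p<1$ is allowed). Consequently your chaining estimate, which converts $L^1$-averages of $g_m$ over the balls $B_i$ into $\big(\M^*(g_m^t)\big)^{1/t}$, is not justified. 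The known repair is to chain with medians rather than integral averages, using \eqref{median ie} to pass from $|m_u(B_{i+1})-m_u(B_i)|$ to $L^t$-averages of $|u(z)-u(w)|$ for arbitrarily small $t$; this is precisely the mechanism behind Lemma \ref{median difference} in the paper, and it is the reason medians appear throughout.

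The second, more serious, gap is the final step, where the gain of integrability from $t$ to $t^*(\eps)=Qt/(Q-\eps t)$ is supposed to be produced. A weak $(1,1)$ bound for the restricted maximal operator applied to $g_j^t\ch{\hat B}$ only yields a weak $L^{t}$ bound for the majorant, and the Maz'ya truncation scheme does not raise the target exponent: it upgrades a weak-type inequality to a strong-type inequality with the \emph{same} exponents. To reach $t^*(\eps)$ one must use the dimensional lower bound \eqref{doubling dimension} (the constant $Q$), e.g.\ via a Hedberg-type splitting of the sum over scales, bounding coarse scales by the total mass $\int_{\hat B}g_j^t\,d\mu$ times $\mu(B(y,r))^{-1}\le C(2^kr)^{-Q}\mu(\hat B)^{-1}$ and fine scales by the maximal function; this produces a weak $L^{t^*(\eps)}$ estimate, which truncation then upgrades to the strong one. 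Your outline never invokes $Q$ at this point, asserts an implication (weak $(t,t)$ plus truncation gives strong $(t,t^*(\eps))$) that is not valid, and then defers ``the details'' to \cite{GKZ} -- which is exactly what the paper itself does by citing \cite[Lemma 2.1]{GKZ}, so as written the proposal does not supply an independent proof of the statement.
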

If $u$ is locally integrable, $(g_j)\in \D^s(u)$ and $0<\eps<\eps'<s<\infty$, then \eqref{Sobolev-Poincare 2}
with $t=Q/(Q+\eps)$ and the H\"older's inequality imply that for $p\ge Q/(Q+\eps)$,
\begin{equation}\label{Poincare 2}
\begin{split}
\vint{B(x,2^{-k})}|u-u_{B(x,2^{-k})}|\,d\mu
\le C2^{-k\eps'}\sum_{j\ge k-2}2^{-j(s-\eps')}
\Big(\,\vint{B(x,2^{-k+1})}g_j^{p}\,d\mu\Big)^{1/p}.
\end{split}
\end{equation}

While working with the Haj\l asz--Triebel-Lizorkin spaces $M^{s}_{p,q}(X)$ we often use an embedding of these spaces into 
the
space $M^{s,p}(X)$ and employ the following Sobolev-Poincar\'e inequality for $s$-gradients.

\begin{lemma}[\cite{GKZ}, Lemma 2.2]\label{lemma:Sobolev-Poincare 1}
Let $0<s<\infty$ and $0<t<Q/s$.
There exists a constant $C>0$ such that for all measurable functions $u$ with $g\in\mathcal D^s(u)$, $x\in X$
and $r>0$,
\begin{equation}\label{Sobolev-Poincare 1}
\inf_{c\in\re}\Big(\,\vint{B(x,r)}|u(y)-c|^{t^*(s)}\,d\mu(y)\Big)^{1/t^*(s)}\\
\le Cr^s
\Big(\,\vint{B(x,2r)}g^{t}\,d\mu\Big)^{1/t},
\end{equation}
where $t^*(s)=Qt/(Q-st)$.
\end{lemma}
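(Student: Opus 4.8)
The plan is to localize to a ball, obtain a pointwise potential estimate by chaining, and conclude by Hedberg's device in the range $1\le t<Q/s$, treating $0<t<1$ separately and more delicately. Fix $B=B(x,r)$ and write $2B=B(x,2r)$. The inequality $|u(y)-u(z)|\le d(y,z)^s(g(y)+g(z))$ gives, for a.e.\ $y,z\in 2B$, the crude bound $|u(y)-u(z)|\le(4r)^s(g(y)+g(z))$, whence $u$ is a.e.\ finite on $2B$ and $\vint{B'}|u-u_{B'}|\,d\mu\le Cr^s\vint{B'}g\,d\mu$ for every ball $B'\subset 2B$ (when $t<1$, where $u$ need not be integrable, one uses a median of $u$ on $B'$ in place of $u_{B'}$). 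Telescoping over the concentric balls $B_i=B(y,2^{-i}r)\subset 2B$ and applying this estimate on each $B_i$ then yields, for a.e.\ $y\in B$,
\[
|u(y)-c|\le C\,\Phi(y),\qquad \Phi(y):=\sum_{i\ge 0}(2^{-i}r)^s\vint{B(y,2^{-i}r)}g\,d\mu,
\]
where $c=u_{2B}$ (or a median of $u$ on $B$ when $t<1$); the difference between $c$ and the mean of $u$ over $B(y,r)$ is harmless, being bounded by $Cr^s\vint{2B}g\,d\mu$, which the right-hand side of the claim dominates in both cases below.

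In the range $1\le t<Q/s$ I would estimate $\Phi$ by Hedberg's trick: for $0<\rho\le 2r$ and $y\in B$ one bounds $\vint{B(y,\rho)}g\,d\mu$ in two ways --- by the maximal function $\M(g\,\ch{2B})(y)$, and, using H\"older's inequality together with the lower volume bound \eqref{doubling dimension}, by $C(\rho/r)^{-Q/t}\big(\vint{2B}g^t\,d\mu\big)^{1/t}$. Inserting both bounds into the sum defining $\Phi$, splitting it at the index where the two become comparable, and summing the two resulting geometric sums (using $Q/t-s>0$ and $s>0$ respectively), one gets
\[
\Phi(y)\le Cr^s\big(\M(g\,\ch{2B})(y)\big)^{1-st/Q}\Big(\vint{2B}g^t\,d\mu\Big)^{s/Q}.
\]
Raising this to the power $t^*$, integrating over $B$, and using the identity $(1-st/Q)\,t^*=t$ (immediate from $t^*=Qt/(Q-st)$), the boundedness of the maximal operator on $L^t$ (here $t>1$) and doubling, one arrives at $\big(\vint{B}\Phi^{t^*}\,d\mu\big)^{1/t^*}\le Cr^s\big(\vint{2B}g^t\,d\mu\big)^{1/t}$, which with the pointwise bound is the assertion.

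For $0<t<1$, H\"older is unavailable and $\Phi$, built from $g$ at exponent one, is no longer controlled by $\|g\|_{L^t(2B)}$, so I would argue via a weak-type estimate. Taking $c$ to be a median of $u$ on $B$ and, for a Lebesgue point $y\in\{|u-c|>\lambda\}\cap B$ with, say, $u(y)>c+\lambda$, exploiting that the median property forces a radius $\rho_y\lesssim r$ on which $\{u\le c\}$ carries a fixed fraction of the mass of $B(y,\rho_y)$, one averages the $s$-gradient inequality over that part to get $\lambda\lesssim\rho_y^s\big(g(y)+\vint{B(y,\rho_y)}g\,d\mu\big)$; a Vitali-type selection from the balls $B(y,\rho_y)$ together with \eqref{doubling dimension} then yields a weak-type Sobolev--Poincar\'e bound for $\mu(\{|u-c|>\lambda\}\cap B)$. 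The strong inequality is then obtained by self-improving this weak-type bound; since, unlike upper gradients or Poincar\'e-type gradients, pointwise Haj\l asz gradients are not preserved by truncations of $u$, the self-improvement is carried out through the atomic (equivalently, maximal-function) characterisation of the fractional Haj\l asz space $\dot M^{s,t}(B)$ valid in this low-integrability range, in the spirit of Hardy--Sobolev theory on $\rn$.

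The step I expect to be the main obstacle is precisely this last one --- the passage from the weak-type to the strong Sobolev--Poincar\'e inequality when $0<t<1$; everything needed for $t\ge 1$ (the chaining, the two-sided bound for $\vint{B(y,\rho)}g\,d\mu$, Hedberg's splitting), as well as the Vitali covering behind the weak-type estimate, is otherwise routine.
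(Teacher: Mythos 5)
First, note what the paper itself does with this lemma: it is not reproved here at all. It is quoted from \cite{GKZ} (Lemma 2.2), and the only argument offered in the text is the reduction remark following the statement: for $s=1$ the inequality is \cite[Thm 8.7]{H2}, and for $0<s<1$ it follows from that case because $d^s$ is again a metric -- under the snowflake transform an $s$-gradient becomes a $1$-gradient, balls are unchanged, the dimension $Q$ in \eqref{doubling dimension} becomes $Q/s$, and the Sobolev exponent of the snowflaked space is exactly $t^*(s)=Qt/(Q-st)$. So any self-contained proof is automatically "a different route"; the issue is whether yours is complete, and it is not. Your chaining--Hedberg argument is the standard one and works when $t>1$, but the case $t=1$ is covered by neither of your two cases (you invoke boundedness of $\M$ on $L^t$, which fails at $t=1$, while your second case is explicitly $0<t<1$). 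More seriously, in the range $0<t<1$ the argument is a plan rather than a proof: the weak-type estimate you sketch bounds $\lambda$ by $\rho_y^s\big(g(y)+\vint{B(y,\rho_y)}g\,d\mu\big)$, an exponent-one average of $g$, which is precisely the quantity you yourself observe is not controlled by $\big(\vint{2B}g^t\,d\mu\big)^{1/t}$; moreover, with the median only forcing $\rho_y\approx r$, Chebyshev on the resulting sets yields the trivial exponent $t$, not $t^*(s)$, so even the weak-type Sobolev inequality is not actually obtained. The decisive step -- upgrading weak to strong type "through the atomic characterisation of $\dot M^{s,t}$ on a ball" -- is left entirely unproved, and on a ball of a general doubling space such a characterisation is not an off-the-shelf tool; it is of essentially the same depth as the lemma itself. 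The parenthetical justification is also inaccurate: Haj\l asz gradients \emph{are} preserved by truncation (truncation is $1$-Lipschitz, so the same $g$ is an $s$-gradient of every truncation of $u$); what fails is the refined statement needed for Maz'ya's trick, namely that $g\ch{\{a<u<b\}}$ is a gradient of the truncated function.

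What you are missing is that your own first step already contains the uniform fix, which is exactly why this literature (\cite{KYZ}, \cite{GKZ}, \cite{Z}, and the present paper) works with medians: fix $0<\eta<\min\{t,1\}$ and chain with $L^\eta$-oscillations. For concentric balls $B''\subset B'$ of comparable measure and radius $\rho$, the pointwise inequality gives $\inf_{c}\vint{B'}|u-c|^\eta\,d\mu\le C\rho^{s\eta}\vint{B'}g^\eta\,d\mu$, and \eqref{median ie} converts this into $|m_u(B'')-m_u(B')|\le C\rho^s\big(\vint{B'}g^\eta\,d\mu\big)^{1/\eta}$; telescoping at Lebesgue points (Remark \ref{median F}) yields the pointwise bound $|u(y)-m_u(B(y,r))|\le C\sum_{i\ge0}(2^{-i}r)^s\big(\vint{B(y,2^{-i}r)}g^\eta\,d\mu\big)^{1/\eta}$. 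Now run your Hedberg splitting verbatim with $\big(\M(g^\eta\ch{2B})(y)\big)^{1/\eta}$ in place of $\M(g\ch{2B})(y)$, using H\"older ($\eta<t$) and \eqref{doubling dimension} for the second bound; the final integration uses the boundedness of $\M$ on $L^{t/\eta}$ with $t/\eta>1$, and the floating constant $m_u(B(y,r))$ is replaced by $m_u(2B)$ at the cost of $Cr^s\big(\vint{2B}g^t\,d\mu\big)^{1/t}$ by the same median estimate. This gives \eqref{Sobolev-Poincare 1} for every $0<t<Q/s$ in one stroke, with no case distinction, no truncation, and no appeal to an atomic theory; alternatively, one can simply do what the paper does and reduce to \cite[Thm 8.7]{H2} via the metric $d^s$.
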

For $s=1$ inequality \eqref{Sobolev-Poincare 1} is given by \cite[Thm 8.7]{H2}, as well as
for $s\in(0,1)$, since in this case $d^s$ is a distance in $X$.

\subsection{Median values}
Using integral averages of a function is a standard technique in construction an extension operator for a locally integrable 
function.
Since we are dealing with the $L^p$-integrable functions, possibly with $0<p<1$, it is convenient to replace in the argument
the integral averages by the median values, as for example in \cite{Z}. 
This allows to handle in the same way spaces of functions with the integrability parameter $0<p<\infty$;
a certain disadvantage of this uniform treatment is that the resulting extension operator appears to be non-linear.

\begin{definition}
The median value of a measurable function $u$ on a set $A\subset X$ is
\begin{equation}\label{median}
m_{u}(A)=\max_{ a\in\re}\bigg\{\mu\big(\{x\in A:u(x)<a\}\big)\le\frac{\mu(A)}2\bigg\}.
\end{equation}
\end{definition}

The following properties of medians justify their role of counterparts for the integral averages in the context.

\begin{lemma}[\cite{Z}, Lemma 2.2; \cite{GKZ}, (2.4)]\label{median lemma}
Let $0<\eta\le 1$ and $u\in L_{\text{loc}}^{\eta}(X)$. Then
\begin{equation}\label{median ie}
|m_{u}(B)-c|
\le\Big(2\vint{B}|u-c|^{\eta}\,d\mu\Big)^{1/\eta}.
\end{equation}
for all balls $B$ and all $c\in\re$. Moreover,
\begin{equation}\label{median limit}
u(x)=\lim_{r\to 0}m_{u}(B(x,r))
\end{equation}
at every Lebesgue point $x\in X$.
\end{lemma}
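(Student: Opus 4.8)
The plan is to reduce both statements to two elementary one-sided measure estimates for the median and then to finish each with a Chebyshev-type inequality.

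First I would record that, writing $m=m_u(B)$ for a ball $B$ with $0<\mu(B)<\infty$, the definition \eqref{median} gives immediately $\mu(\{x\in B:u(x)<m\})\le\tfrac12\mu(B)$, while maximality of $m$ gives $\mu(\{x\in B:u(x)<a\})>\tfrac12\mu(B)$ for every $a>m$; letting $a\downarrow m$ and using continuity of $\mu$ along the increasing union $\bigcup_{a>m}\{x\in B:u(x)\ge a\}=\{x\in B:u(x)>m\}$ yields the companion bound $\mu(\{x\in B:u(x)>m\})\le\tfrac12\mu(B)$. Along the way one checks that the maximum in \eqref{median} is attained and that $m$ is finite, which follows from the left-continuity of $a\mapsto\mu(\{x\in B:u(x)<a\})$ together with the fact that $u\in L^\eta_{\mathrm{loc}}(X)$ is finite $\mu$-a.e.

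To prove \eqref{median ie}, fix $c\in\re$; by symmetry I may assume $m\ge c$ (the case $m<c$ is handled by the companion estimate from the previous step, and $m=c$ is trivial). Then the set $A=\{x\in B:u(x)\ge m\}$ satisfies $\mu(A)\ge\tfrac12\mu(B)$, and on $A$ one has $|u(x)-c|\ge m-c\ge 0$, hence
\[
\vint{B}|u-c|^{\eta}\,d\mu\ \ge\ \frac{1}{\mu(B)}\int_A|u-c|^{\eta}\,d\mu\ \ge\ \frac{\mu(A)}{\mu(B)}\,(m-c)^{\eta}\ \ge\ \frac12\,(m-c)^{\eta},
\]
and rearranging gives \eqref{median ie}. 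For \eqref{median limit} I would invoke the Lebesgue differentiation theorem, valid since $\mu$ is doubling: at every Lebesgue point $x$ of $u$ one has $\vint{B(x,r)}|u(y)-u(x)|^{\eta}\,d\mu(y)\to 0$ as $r\to 0$, so applying \eqref{median ie} with $B=B(x,r)$ and $c=u(x)$ forces $|m_u(B(x,r))-u(x)|\to 0$.

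The argument is short, and the only mild subtleties are checking that $m_u(B)$ is a well-defined finite number and pinning down the right notion of Lebesgue point in \eqref{median limit}: for $\eta<1$ this should be a point at which the $L^{\eta}$-average of $|u-u(x)|$ tends to zero, which is still the case at $\mu$-a.e. $x$ by the differentiation theorem. I do not expect any genuine obstacle here.
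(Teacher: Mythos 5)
Your proof is correct: the two one-sided measure bounds for $m_u(B)$ (including the attainment of the maximum in \eqref{median}, which uses left-continuity of $a\mapsto\mu(\{u<a\}\cap B)$ and finiteness of $u$ a.e.), the Chebyshev-type estimate giving \eqref{median ie}, and the deduction of \eqref{median limit} from \eqref{median ie} via the Lebesgue differentiation theorem (with the $L^\eta$-notion of Lebesgue point when $\eta<1$) are all sound. The paper does not prove this lemma itself but cites \cite[Lemma 2.2]{Z} and \cite{GKZ}, and its Remark \ref{median F} derives \eqref{median limit} exactly as you do; your argument is essentially the standard one the paper delegates to those references, so there is nothing to add.
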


\begin{remark}\label{median F}
Property \eqref{median limit} follows from \eqref{median ie} by the Lebesgue differentiation theorem.
The proof of \cite[Lemma 2.2]{Z} shows that inequality \eqref{median ie} holds for all measurable sets
$E$ with positive and finite measure. In particular, \eqref{median ie} holds for every set $B\cap S$,
where $S$ satisfies measure density condition \eqref{measure density} and $B$ is a ball centered at $S$.
This, together with the measure density condition and the Lebesgue differentiation theorem, implies that,
\[
u(x)=\lim_{r\to 0}m_{u}(B(x,r)\cap S),
\]
for almost all $x\in S$.
\end{remark}

By combining \eqref{median ie}  and Lemma \ref{lemma:Sobolev-Poincare 2},
we obtain the following result, which is frequently used in the proof of Theorem \ref{m extension}.

\begin{lemma}\label{median difference}
Let $0<t<\infty$ and $0<\eps'<s<1$.
Let $k\in\z$, $x\in X$ and let $B$ be a ball such that $B\subset B(x,2^{-k})$  and $\mu(B)\approx \mu(B(x,2^{-k}))$.
Then there exists a constant $C>0$ such that for all measurable functions $u$ with $(g_j)\in \D^s(u)$,
\begin{equation}\label{estimate for medians}
|m_u(B)-m_u(B(x,2^{-k}))|
\le C2^{-k\eps'}\sum_{j\ge k-2}2^{-j(s-\eps')}\Big(\,\vint{B(x,2^{-k+1})}g_j^{t}\,d\mu\Big)^{1/t}.
\end{equation}
\end{lemma}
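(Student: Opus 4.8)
The plan is to derive \eqref{estimate for medians} directly from the Sobolev--Poincar\'e inequality \eqref{Sobolev-Poincare 2} and the median inequality \eqref{median ie}. Before starting I would make two routine reductions. First, one may assume that the right-hand side of \eqref{estimate for medians} is finite, since otherwise there is nothing to prove; in that case \eqref{Sobolev-Poincare 2} shows (being governed by the same sum) that $u$ is $t^*(\eps)$-integrable over $B(x,2^{-k})$, hence over $B$, so the median values occurring below are finite and \eqref{median ie} is applicable. Second, one may assume $0<t<Q/s$: if $t\ge Q/s$, fix $\tilde t\in(0,Q/s)$ with $\tilde t\le t$, prove the inequality with $\tilde t$ in place of $t$, and pass back to $t$ via the elementary bound $\big(\vint{B(x,2^{-k+1})}g_j^{\tilde t}\,d\mu\big)^{1/\tilde t}\le\big(\vint{B(x,2^{-k+1})}g_j^{t}\,d\mu\big)^{1/t}$, which holds because $\tilde t\le t$ and the integrals are taken over a probability space.

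Next I would fix an auxiliary exponent $\eps$ with $0<\eps<\eps'$ and write $t^*=t^*(\eps)=Qt/(Q-\eps t)$, which is a finite positive exponent since $t<Q/s<Q/\eps$. Because $t^*$ may exceed $1$, I would also set $\eta=\min\{1,t^*\}$, so that $0<\eta\le 1$ and $\eta\le t^*$ simultaneously. Choosing $c\in\re$ with $\big(\vint{B(x,2^{-k})}|u-c|^{t^*}\,d\mu\big)^{1/t^*}\le 2\inf_{c'\in\re}\big(\vint{B(x,2^{-k})}|u-c'|^{t^*}\,d\mu\big)^{1/t^*}$, the key computation is that for $D\in\{B,\,B(x,2^{-k})\}$,
\[
|m_u(D)-c|\le\Big(2\vint{D}|u-c|^{\eta}\,d\mu\Big)^{1/\eta}\le 2^{1/\eta}\Big(\vint{D}|u-c|^{t^*}\,d\mu\Big)^{1/t^*},
\]
where the first inequality is \eqref{median ie} — applicable also to $D=B$, since by Remark \ref{median F} inequality \eqref{median ie} holds on every measurable set of positive finite measure — and the second is Jensen's inequality, using $\eta\le t^*$.

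Finally, since $B\subset B(x,2^{-k})$ and $\mu(B)\approx\mu(B(x,2^{-k}))$, the average of $|u-c|^{t^*}$ over $B$ is bounded by a constant multiple (depending only on the comparability constant) of the average over $B(x,2^{-k})$; the same holds trivially for $D=B(x,2^{-k})$. Combining this with the previous display, the triangle inequality $|m_u(B)-m_u(B(x,2^{-k}))|\le|m_u(B)-c|+|m_u(B(x,2^{-k}))-c|$, and the choice of $c$, one obtains $|m_u(B)-m_u(B(x,2^{-k}))|\le C\inf_{c'\in\re}\big(\vint{B(x,2^{-k})}|u-c'|^{t^*}\,d\mu\big)^{1/t^*}$, and Lemma \ref{lemma:Sobolev-Poincare 2}, applied with the exponents $0<\eps<\eps'<s$, bounds this by $C2^{-k\eps'}\sum_{j\ge k-2}2^{-j(s-\eps')}\big(\vint{B(x,2^{-k+1})}g_j^{t}\,d\mu\big)^{1/t}$, which is the claim. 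I do not expect a genuine obstacle: the argument is a short chain of inequalities, and the only delicate point is the exponent bookkeeping — making $\eta$ simultaneously $\le 1$ (needed for \eqref{median ie}) and $\le t^*(\eps)$ (needed for Jensen), together with the preliminary reduction $t<Q/s$ that makes $t^*(\eps)$ a legitimate exponent in Lemma \ref{lemma:Sobolev-Poincare 2}.
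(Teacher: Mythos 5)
Your proof is correct and follows essentially the same route as the paper: combine the median inequality \eqref{median ie} with the comparability of $B$ and $B(x,2^{-k})$ and then invoke the Sobolev--Poincar\'e inequality \eqref{Sobolev-Poincare 2}. The only difference is bookkeeping: you reduce explicitly to $t<Q/s$ and pass from the exponent $\eta=\min\{1,t^*(\eps)\}$ to $t^*(\eps)$ by Jensen on the $|u-c|$ side, whereas the paper takes $\eta=t^*(\eps)$ or $1$ and absorbs the same adjustment by H\"older on the gradient side; both are fine.
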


\begin{proof}
Let $0<\eps<\eps'$ and let $\eta=t^*(\eps)=Qt/(Q-\eps t)$ if $t\le Q/(Q+\eps)$, and $\eta=1$ otherwise.
Using \eqref{Sobolev-Poincare 2} and the H\"older inequality, we obtain
\begin{equation}\label{sp ie}
\inf_{c\in\re}\Big(\,\vint{B(x,2^{-k})}|u(y)-c|^{\eta}\,d\mu(y)\Big)^{1/\eta}
\le C2^{-k\eps'}\sum_{j\ge k-2}2^{-j(s-\eps')}
\Big(\,\vint{B(x,2^{-k+1})}g_j^{t}\,d\mu\Big)^{1/t}.
\end{equation}
Let $c\in\re$. By \eqref{median ie}, we have
\[
\begin{split}
|m_u(B)-m_u(B(x,2^{-k}))|
&\le |m_u(B)-c|+|c-m_u(B(x,2^{-k}))|\\
&\le \Big(2\vint{B}|u-c|^{\eta}\,d\mu\Big)^{1/\eta}+\Big(2\vint{B(x,2^{-k})}|u-c|^{\eta}\,d\mu\Big)^{1/\eta}\\
&\le C\Big(\,\vint{B(x,2^{-k})}|u-c|^{\eta}\,d\mu\Big)^{1/\eta}.
\end{split}
\]
The claim follows by taking the infimum over $c\in\re$ and applying \eqref{sp ie}.
\end{proof}

\begin{remark}\label{poincare F}
If a set $S$ satisfies measure density condition \eqref{measure density}, then the induced space $(S,d,\mu|_{S})$ satisfies 
doubling condition \eqref{doubling measure} locally, that is, for small radii,
and we can replace small balls $B$, which are centered in $S$, with $B\cap S$ in inequality \eqref{estimate for medians}.
\end{remark}

\subsection{Leibniz type rules and norm estimates for Lipschitz functions}\label{section: lip}
We finish this section by proving a Leibniz type rule for fractional $s$-gradients and some norm estimates for Lipschitz 
functions.
These norm estimates are used later to show that the extension property  for Besov spaces, or for Triebel--Lizorkin spaces,
implies measure density condition \eqref{measure density}.

\begin{lemma}\label{LemWithLipForTL}
Let $0<s<1$, $0<p<\infty$ and $0<q\le\infty$, and let $S\subset X$ be a measurable set.
Let $u\colon X\to\re$ be a measurable function with $(g_k)\in \D^s(u)$ and
let $\ph$ be a bounded $L$-Lipschitz function supported in $S$.
Then sequences $(h_k)_{k\in\z}$ and $(\rho_k)_{k\in\z}$, where
\[
\rho_k=\big(g_k\Vert \ph\Vert_{\infty}+2^{k(s-1)}L|u|\big)\ch{\operatorname{supp}\ph}
 \quad\text{and}\quad
h_k=\big(g_k+2^{s k+2}|u|\big)\Vert \ph\Vert_{\infty}\ch{\operatorname{supp}\ph}
\]
are fractional $s$-gradients of $u\ph$.
Moreover, if $u\in M^s_{p,q}(S)$, then $u\ph\in M^s_{p,q}(X)$ and $\|u\ph\|_{M^s_{p,q}(X)}\le C\|u\|_{M^s_{p,q}(S)}$.
\end{lemma}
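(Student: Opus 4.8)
The statement has two parts: first, verify that the two explicit sequences are fractional $s$-gradients of $u\ph$; second, turn this into the norm estimate $\|u\ph\|_{M^s_{p,q}(X)}\le C\|u\|_{M^s_{p,q}(S)}$. The plan is to start from the pointwise identity
\[
(u\ph)(x)-(u\ph)(y)=\ph(x)\big(u(x)-u(y)\big)+u(y)\big(\ph(x)-\ph(y)\big),
\]
take absolute values, and bound each term separately on the scale $2^{-k-1}\le d(x,y)<2^{-k}$. For the first term, $|\ph(x)|\le\|\ph\|_\infty$ together with $(g_k)\in\D^s(u)$ gives $|\ph(x)||u(x)-u(y)|\le \|\ph\|_\infty d(x,y)^s(g_k(x)+g_k(y))$. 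For the second term we use $|\ph(x)-\ph(y)|\le L\,d(x,y)$ and then $d(x,y)=d(x,y)^s d(x,y)^{1-s}< d(x,y)^s 2^{(-k)(1-s)}=2^{k(s-1)}d(x,y)^s$ on this scale, so $|u(y)||\ph(x)-\ph(y)|\le 2^{k(s-1)}L|u(y)|d(x,y)^s$; symmetrizing (replacing $|u(y)|$ by $|u(x)|+|u(y)|$ up to the same constant, or noting that one of $x,y$ lies in $\operatorname{supp}\ph$ forces attention only there) gives the $\rho_k$ form. For $h_k$ one instead uses the cruder bound $L\,d(x,y)\le L\,2^{-k}=2^{-ks}\cdot 2^{ks}\cdot 2^{-k}\le 2^{ks}\cdot 2^{-k(s)}\cdots$; more precisely $|\ph(x)-\ph(y)|\le 2L\|\ph\|_\infty/\|\ph\|_\infty \cdot$, but the clean route is: since $\ph$ is bounded, $|\ph(x)-\ph(y)|\le 2\|\ph\|_\infty$, and also $\le L\,d(x,y)$; interpolating, $|\ph(x)-\ph(y)|\le (2\|\ph\|_\infty)^{1-s}(L\,d(x,y))^s$, which is not quite the stated $h_k$. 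So I expect the intended derivation of $h_k$ uses $|\ph(x)-\ph(y)|\le 2\|\ph\|_\infty$ combined with $1\le 2^{ks}\cdot 2^{-ks}$ and $d(x,y)^s\ge 2^{-(k+1)s}$, i.e. $1\le 2^{(k+1)s}d(x,y)^s$, giving $|u(y)||\ph(x)-\ph(y)|\le 2^{(k+1)s+1}\|\ph\|_\infty |u(y)| d(x,y)^s\le 2^{ks+2}\|\ph\|_\infty|u(y)|d(x,y)^s$. The characteristic function $\ch{\operatorname{supp}\ph}$ is legitimate because if $x,y\notin\operatorname{supp}\ph$ then $(u\ph)(x)=(u\ph)(y)=0$, while if (say) $x\in\operatorname{supp}\ph$ then $g_k(x)\ch{\operatorname{supp}\ph}(x)=g_k(x)$ already carries the bound and we may freely drop the term at $y$. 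One should be slightly careful that $u$ is only defined on $S$ here while $u\ph$ is to be a function on $X$: since $\operatorname{supp}\ph\subset S$, the product $u\ph$ extends by zero to all of $X$, and the inequality is checked for $x,y\in X\setminus E$ with $E$ the exceptional set from $\D^s(u)$ (the outside-support case being trivial).

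For the norm estimate, I would apply the first part with $\ph$ as given and estimate $\|(h_k)_k\|_{L^p(X,l^q)}$ (resp.\ $\|(\rho_k)_k\|$; either sequence works, I will use $(h_k)$). Since $h_k$ is supported in $\operatorname{supp}\ph\subset S$, the $L^p$ norm over $X$ equals the $L^p$ norm over $S$. We have, pointwise in the $l^q$ sense,
\[
\|(h_k)_k\|_{l^q}\le \|\ph\|_\infty\Big(\|(g_k)_k\|_{l^q}+4|u|\,\|(2^{sk})_k\|_{l^q}\Big).
\]
The term $\|(2^{sk})_{k}\|_{l^q}$ diverges over all $k\in\z$, so here I invoke Remark \ref{norm sum N}: it suffices to control the inhomogeneous norm using only indices $k\in\n$ (equivalently $k>0$, or after a harmless index shift $k\ge k_0$), and one replaces the negative-index part of the gradient as in that remark. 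Restricting to $k\ge 1$ (say), $\big(\sum_{k\ge 1}2^{skq}\big)^{1/q}$ still diverges — so in fact the right move is the \emph{opposite}: one uses that $\operatorname{supp}\ph\subset S$ is, in the application, \emph{bounded}, hence of bounded diameter, so $d(x,y)<\diam(\operatorname{supp}\ph)$ forces $d(x,y)\ge 2^{-k-1}$ to fail for $k$ below some $k_0$; for $k<k_0$ one may take $h_k\equiv 0$ on that scale (no pair $x,y\in\operatorname{supp}\ph$ realizes separation $2^{-k-1}\le d(x,y)<2^{-k}$). Thus effectively the sum over $k$ runs over $k\ge k_0$ with $k_0$ depending on $\diam\operatorname{supp}\ph$, and $\big(\sum_{k\ge k_0}2^{skq}\big)^{1/q}$ — still divergent. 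I think the clean fix is instead: keep the $2^{k(s-1)}$ weight of the $\rho_k$ sequence, for which $\sum_{k\ge k_0}2^{k(s-1)q}<\infty$ since $s<1$; so the estimate should be run with $(\rho_k)$, not $(h_k)$. Then
\[
\|(\rho_k)_k\|_{L^p(X,l^q)}\le \|\ph\|_\infty\|(g_k)_k\|_{L^p(S,l^q)}+L\,\Big\|\,|u|\,\big(\textstyle\sum_{k\ge k_0}2^{k(s-1)q}\big)^{1/q}\Big\|_{L^p(S)}\le C\big(\|\ph\|_\infty+L\big)\|u\|_{M^s_{p,q}(S)},
\]
where $C$ absorbs $\big(\sum_{k\ge k_0}2^{k(s-1)q}\big)^{1/q}\le C(s,q,\diam\operatorname{supp}\ph)$, and I used $\|u\|_{L^p(S)}+\inf_{\vec g}\|\vec g\|_{L^p(S,l^q)}=\|u\|_{M^s_{p,q}(S)}$. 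Taking the infimum over $(g_k)\in\D^s(u)$ and adding $\|u\ph\|_{L^p(X)}=\|u\ph\|_{L^p(S)}\le\|\ph\|_\infty\|u\|_{L^p(S)}$ yields the claim. (One sees here why the lemma states \emph{both} sequences: $h_k$ is the natural one for Besov/T--L homogeneous estimates where one does not want the $2^{k(s-1)}$ weight mixing scales with $u$, while $\rho_k$ is the one that makes the $\ell^q$-in-$k$ summation converge; the proof above uses $\rho_k$.)

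The main obstacle, as the discussion shows, is bookkeeping the index range in $k$: the naive bound on $(h_k)$ produces a divergent geometric series $\sum 2^{skq}$, and one must either (i) switch to the sequence $(\rho_k)$ whose weight $2^{k(s-1)q}$ is summable for $s<1$, using boundedness of $\operatorname{supp}\ph$ to cut off small $k$, or (ii) appeal to Remark \ref{norm sum N} to pass to an equivalent inhomogeneous norm and then argue as in that remark. I would present route (i) with $(\rho_k)$ as the clean argument, and simply remark that membership via $(h_k)$ is verified the same way at the level of pointwise inequalities (which is all the first sentence of the lemma claims). A minor technical point worth a line: to apply Lemma-style reasoning one should note $u\ph$ is measurable on $X$ and, because $\operatorname{supp}\ph\subset S$ and $|\ph|\le\|\ph\|_\infty$, lies in $L^p(X)$ whenever $u\in L^p(S)$; and the exceptional set for the gradient inequalities is $E\cup(\text{null set where }\ph\text{ misbehaves})$, still null.
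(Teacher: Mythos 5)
Your verification of the pointwise inequalities (the first claim of the lemma) is essentially the paper's own argument and is fine. The genuine gap is in the norm estimate, where you commit to "route (i)". That route fails: you cannot declare the fractional gradient of $u\ph$ to be zero at scales $k<k_0$, even when $\operatorname{supp}\ph$ is bounded, because the defining inequality must also hold for pairs with $x\in\operatorname{supp}\ph$ and $y\notin\operatorname{supp}\ph$ at distance comparable to $2^{-k}$; for such pairs $|u\ph(x)-u\ph(y)|=|u(x)\ph(x)|$ is in general nonzero, so at every scale $k$ the gradient must dominate roughly $2^{ks}\Vert\ph\Vert_\infty|u|$ (or $2^{k(s-1)}L|u|$) on $\operatorname{supp}\ph$ --- it cannot vanish. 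Consequently $(\rho_k)_{k\in\z}$ with the small-$k$ terms discarded is not an admissible gradient, and $(\rho_k)_{k\in\z}$ itself has infinite $l^q$-norm wherever $u\neq0$, since $\sum_{k\le 0}2^{k(s-1)q}=\infty$. In addition, the bounded-support hypothesis you import is not part of the lemma and fails exactly in the paper's main application: there $\ph=\Psi$ is a cutoff supported in the neighborhood $V$ of a possibly unbounded set $S$, so a constant depending on $\diam\operatorname{supp}\ph$ would be useless.

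The repair is the idea you circled but did not execute: splice the two sequences. The paper sets $g'_k=h_k$ for $k<k_L$ and $g'_k=\rho_k$ for $k\ge k_L$, where $2^{k_L-1}<L\le 2^{k_L}$; this is a fractional $s$-gradient of $u\ph$ by the pointwise part, and now both tails are summable, $\sum_{k<k_L}2^{skq}\approx 2^{sk_Lq}$ because $s>0$ and $\sum_{k\ge k_L}2^{k(s-1)q}\approx 2^{k_L(s-1)q}$ because $s<1$, giving
\[
\|\vec{g'}\|_{L^p(X,\,l^q)}\le C\big(\Vert\ph\Vert_{\infty}\|\vec{g}\|_{L^p(S,\,l^q)}+L^{s}\|u\|_{L^p(S)}\big)
\]
and hence the claimed bound after adding $\|u\ph\|_{L^p(X)}\le\Vert\ph\Vert_\infty\|u\|_{L^p(S)}$. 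Your "route (ii)" (handle $k\le 0$ via Remark \ref{norm sum N} with $2^{(k+1)s}|u\ph|$ and use $\rho_k$ only for $k\ge 1$) is a legitimate variant of the same splicing and would also close the argument, with a constant $\max\{1,L\}^{s}$ instead of the sharper $L^{s}$; but the argument you actually chose to present, with the zero cutoff and the bounded-support assumption, is not correct as written.
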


\begin{proof}
For the first claim, let $x,y\in X$, and let $k\in\z$ such that $2^{-k-1}\le d(x,y)<2^{-k}$.
By the triangle inequality, we have
\[
|u(x)\ph(x)-u(y)\ph(y)|\le|u(x)||\ph(x)-\ph(y)|+|\ph(y)||u(x)-u(y)|.
\]
We consider four cases depending whether $x$ or $y$ belongs to $\operatorname{supp}\ph$ or not.
If $x,y\in \operatorname{supp}\ph$, then
\begin{align*}
 |u(x)\ph(x)-u(y)\ph(y)|&\le|u(x)|Ld(x,y)+\|\ph\|_{\infty}d(x,y)^s (g_k(x)+g_k(y))\\
&\le d(x,y)^s\big(2^{k(s-1)}L|u(x)|+\|\ph\|_{\infty}(g_k(x)+g_k(y))\big)\\
&\le d(x,y)^s (\rho_k(x)+\rho_k(y)),
\end{align*}
and, on the other hand,
\begin{align*}
|u(x)\ph(x)-u(y)\ph(y)|
&\le 2\|\ph\|_{\infty}|u(x)|+\|\ph\|_{L^\infty}d(x,y)^s (g_k(x)+g_k(y))\\
&\le d(x,y)^s\Vert \ph\Vert_{\infty}\big(2\cdot 2^{s (k+1)}|u(x)|+g_k(x)+g_k(y)\big)\\
&\le d(x,y)^s (h_k(x)+h_k(y)).
\end{align*}
Hence, in this case, $(\rho_k)_{k\in\z}$ and $(h_k)_{k\in\z}$ satisfy the required inequality.
The remaining two cases are considered in the same, even simpler, way.
This shows that $(\rho_k)_{k\in\z}$ and $(h_k)_{k\in\z}$ are fractional $s$-gradients of $u\ph$.

To prove the second claim, suppose that $\|\vec{g}\|_{L^p(S,\,l^q)}\le 2\inf \|\vec{r}\|_{L^p(S,\,l^q)}$,
where the infimum is taken over fractional $s$-gradients of $u$ in $S$.
By the first part of the proof, the sequence $(g'_k)_{k\in\z}$, 
\[
g'_k=
\begin{cases}
h_k,\quad &\text{if }k<k_L,\\
\rho_k,\quad &\text{if }k\ge k_L,
\end{cases}
\]
where $k_L$ is an integer such that $2^{k_L-1}<L\le 2^{k_L}$, 
is a fractional $s$-gradient of $u\ph$.

Concerning the norm, if $0<q<\infty$, we have
\[
\begin{split}
\Big(\sum_{k\in\z}|g'_k|^q\Big)^{1/q}
&\le C \Big(\|\ph\|_{\infty}\Big(\sum_{k=-\infty}^{k_L-1}(g_k+2^{s k+2}|u|)^q\Big)^{1/q}\\
   &\quad\quad+\Big(\sum_{k=k_L}^{\infty}(g_k\Vert \ph\Vert_{\infty}+2^{k(s-1)}L|u|)^q\Big)^{1/q}
   \Big)\\
&\le C\bigg(\|\ph\|_{\infty}\Big(\sum_{k\in\z}|g_k|^q\Big)^{1/q}+|u|\Big(\sum_{k=-\infty}^{k_L-1}2^{(s k+2)q}\Big)^{1/q}\\
&\quad\quad +L|u|\Big(\sum_{k=k_L}^{\infty}2^{kq(s-1)}\Big)^{1/q}\bigg),
\end{split}
\]
and hence
\[
\begin{split}
\|\vec{g'}\|_{L^p(X,\,l^q)}
&\le C\big(\Vert\ph\Vert_{\infty}\|\vec{g}\|_{L^p(S,\,l^q)}+\|u\|_{L^p(S)}2^{s k_L}+
L\|u\|_{L^p(S)}2^{k_L(s-1)}\big)\\
&\le C\big(\Vert\ph\Vert_{\infty}\|\vec{g}\|_{L^p(S,\,l^q)}+L^s\|u\|_{L^p(S)}\big).
\end{split}
\]
The claim follows by the selection of $(g_k)_{k\in\z}$.
The case $q=\infty$ follows using similar arguments.
\end{proof}

\begin{remark}\label{WithLipForB}
An analogue of Lemma \ref{LemWithLipForTL} holds also for functions from Haj{\l}asz--Besov spaces
$N^{s}_{p,q}(S)$. To prove this, it remains to show that
$\|\vec{g'}\|_{l^q(X,L^p)}<\infty$, with the corresponding bound for the norm, whenever $\vec{g}\in \D^s(u)$ is such that
$\|\vec{g}\|_{l^q(S,L^p)}<\infty$. Indeed, when $0<q<\infty$, we have
\[
\begin{split}
\|\vec{g'}\|_{l^q(X,L^p)}
&=\bigg(\sum_{k\in\z}\Vert g'_k\Vert_{L^p(X)}^q\bigg)^{1/q}\\
&=\Vert\ph\Vert_{L^\infty}\bigg(\sum_{k=-\infty}^{k_L-1}\big\|g_k
   +2^{sk+2}|u|\big\|_{L^p(S)}^q\bigg)^{1/q}\\
&\quad+\bigg(\sum_{k=k_L}^{\infty}\big\|g_k\Vert \ph\Vert_{\infty}
  +2^{k(s-1)}L|u|\big\|_{L^p(S)}^q\bigg)^{1/q}\\
&\le C\Vert\ph\Vert_{\infty}\bigg(\sum_{k\in\z}\|g_k\|_{L^p(S)}^q\bigg)^{1/q}
  +\|u\|_{L^p(S)}\bigg(\sum_{k=-\infty}^{k_L-1}2^{(sk+2)q}\bigg)^{1/q}\\
&\quad  +L\|u\|_{L^p(S)}\bigg(\sum_{k=k_L}^{\infty}2^{kq(s-1)}\bigg)^{1/q}\\
&\le C \Vert\ph\Vert_{\infty}\|\vec{g}\|_{l^q(S,L^p)}+L^s\|u\|_{L^p(S)},
\end{split}
\]
which implies the claim. The case $q=\infty$ follows similarly.
\end{remark}

By selecting $u\equiv 1$ and $g_k\equiv 0$ for all $k\in\z$ in (the proof of) Lemma \ref{WithLipForB},
we obtain norm estimates for Lipschitz functions supported in bounded sets.

\begin{corollary}\label{lemma:lip frac gradient}
Let $0<s<1$, $0<p<\infty$ and $0<q\le\infty$.
Let $\Omega\subset X$ be a measurable set and
let $\ph\colon \Omega\to\re$ be an $L$-Lipschitz function supported in a bounded set $F\subset \Omega$.
Then $\ph\in M^{s}_{p,q}(\Omega)$ and
\begin{equation}\label{Msps norm u}
\|\ph\|_{M^{s}_{p,q}(\Omega)}\le C(1+\|\ph\|_{\infty})(1+L^{s})\mu(F)^{1/p},
\end{equation}
where the constant $C>0$ depends only on $s$ and $q$.
The claim holds also with $M^{s}_{p,q}(\Omega)$ replaced by $N^{s}_{p,q}(\Omega)$.
 \end{corollary}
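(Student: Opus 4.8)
The plan is to specialise the constructions in Lemma \ref{LemWithLipForTL} and Remark \ref{WithLipForB} to the constant function $u\equiv 1$. I would apply the first part of Lemma \ref{LemWithLipForTL} with $S=\Omega$, with $u\equiv 1$, and with the zero sequence $(g_k)_{k\in\z}\equiv 0$, which plainly lies in $\D^s(u)$ since $|u(x)-u(y)|=0$ for all $x,y$. Because $u\ph=\ph$, the two sequences produced there collapse to
\[
\rho_k=2^{k(s-1)}L\,\ch{\operatorname{supp}\ph},\qquad h_k=2^{sk+2}\|\ph\|_\infty\,\ch{\operatorname{supp}\ph},
\]
and, exactly as in the proof of that lemma, the spliced sequence $(g'_k)_{k\in\z}$ defined by $g'_k=h_k$ for $k<k_L$ and $g'_k=\rho_k$ for $k\ge k_L$, where $k_L\in\z$ satisfies $2^{k_L-1}<L\le 2^{k_L}$, is a fractional $s$-gradient of $\ph$; every one of its terms is supported in $\operatorname{supp}\ph\subset F$, a set of finite measure since $F$ is bounded.

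Next I would estimate the two resulting geometric sums. For $0<q<\infty$ and $x\in F$ one has
\[
\Big(\sum_{k\in\z}|g'_k(x)|^q\Big)^{1/q}
\le C\|\ph\|_\infty\Big(\sum_{k<k_L}2^{skq}\Big)^{1/q}+CL\Big(\sum_{k\ge k_L}2^{k(s-1)q}\Big)^{1/q},
\]
and since $s>0$ and $s-1<0$ both series converge to quantities comparable, with constants depending only on $s$ and $q$, to $2^{sk_Lq}$ and $2^{(s-1)k_Lq}$. By the choice of $k_L$ we have $2^{sk_L}\approx L^s$ and $L\,2^{(s-1)k_L}\approx L^s$, so the $l^q$-norm above is at most $C(s,q)(1+\|\ph\|_\infty)L^s\,\ch F(x)$; taking the $L^p(\Omega)$-norm gives $\|\ph\|_{\dot M^s_{p,q}(\Omega)}\le C(s,q)(1+\|\ph\|_\infty)L^s\mu(F)^{1/p}$. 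The case $q=\infty$ is handled identically with suprema replacing the sums.

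Finally I would combine this with the trivial bound $\|\ph\|_{L^p(\Omega)}\le\|\ph\|_\infty\mu(F)^{1/p}$ to obtain \eqref{Msps norm u} for $M^s_{p,q}(\Omega)$. For $N^s_{p,q}(\Omega)$ the same argument applies: using $\|g'_k\|_{L^p(\Omega)}\le 4\cdot 2^{sk}\|\ph\|_\infty\mu(F)^{1/p}$ for $k<k_L$ and $\|g'_k\|_{L^p(\Omega)}\le 2^{k(s-1)}L\mu(F)^{1/p}$ for $k\ge k_L$, the same two geometric series summed in $l^q$ give $\|(g'_k)_{k\in\z}\|_{l^q(\Omega,\,L^p)}\le C(s,q)(1+\|\ph\|_\infty)L^s\mu(F)^{1/p}$, exactly as in Remark \ref{WithLipForB}, and the conclusion follows. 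I do not expect any real obstacle here: the single point worth noting is that putting $u\equiv 1$ turns the fractional $s$-gradients into explicit geometric sequences whose $l^q$-sums are controlled entirely by the position of $k_L$, hence by $L^s$.
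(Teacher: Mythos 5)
Your proposal is correct and follows exactly the paper's own route: the paper proves the corollary precisely by taking $u\equiv 1$ and $g_k\equiv 0$ in (the proof of) Lemma \ref{LemWithLipForTL} and Remark \ref{WithLipForB}, so the spliced sequence $(g'_k)$ with the splitting at $k_L$ and the two geometric series controlled by $2^{sk_L}\approx L^s$ and $L2^{(s-1)k_L}\approx L^s$ is the intended argument. Your explicit evaluation of the sums and the final combination with $\|\ph\|_{L^p(\Omega)}\le\|\ph\|_\infty\mu(F)^{1/p}$ yields \eqref{Msps norm u} with a constant depending only on $s$ and $q$, in both the $M^s_{p,q}$ and $N^s_{p,q}$ cases.
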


\section{Interpolation and embedding theorems for Besov spaces}\label{section: interpolation}

In this section, we prove new interpolation and embedding theorems for Besov spaces.
Recall some essential definitions and properties of the real interpolation theory;
see, for example, the classical references \cite{BS}, \cite{BL} for the details.

Let $A_0$ and $A_1$ be (quasi-semi)normed spaces continuously embedded into a topological vector space $\mathcal{A}$.
For every $f\in A_0+A_1$ and $t>0$, the {\em $K$-functional} is
\[
K(f,t;A_0,A_1)=\inf\big\{\|f_0\|_{A_0}+t\|f_1\|_{A_1}: f=f_0+f_1\big\}.
\]
Let $0<s<1$ and $0<q\le \infty$.
The {\em interpolation space} $(A_0,A_1)_{s,q}$ consists of functions $f\in A_0+A_1$, for which
\[
\|f\|_{(A_0,A_1)_{s,q}}=
\begin{cases}
\Big(\int_0^\infty\big(t^{-s}K(f,t;A_0,A_1) \big)^q \frac{dt}{t}\Big)^{1/q}, &\text{ if }q<\infty\\
\sup_{t>0}t^{-s}K(f,t;A_0,A_1), &\text{ if }q=\infty,
\end{cases}
\]
is finite.

The following theorem is the main result of this section.
We will apply it later only in the case $q=\infty$, but since this interpolation result is of independent interest,
we prove it in full generality.
The case $1\le p<\infty$, $1\le q\le\infty $ was earlier obtained  in \cite[Cor. 4.3]{GKS}
using a version of the Korevaar--Schoen definition for the Sobolev spaces in the metric setting.

\begin{theorem}\label{thm: interpolation}
Let $X$ be a metric space with a doubling measure $\mu$.
Let $0<p<\infty$, $0<q\le \infty$ and $0<s<1$. Then
\begin{equation}\label{eq: interpolation homogeneous}
\dot N^s_{p,q}(X)=(L^p(X),\dot M^{1,p}(X))_{s,q}
\end{equation}
and
\begin{equation}\label{eq: interpolation}
N^s_{p,q}(X)=\big(L^p(X),M^{1,p}(X)\big)_{s,q}
\end{equation}
with equivalent norms.
\end{theorem}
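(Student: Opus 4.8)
The plan is to prove the two identities by establishing two-sided norm estimates, reducing the inhomogeneous case \eqref{eq: interpolation} to the homogeneous one \eqref{eq: interpolation homogeneous} together with the obvious fact that $L^p(X)$ is one of the endpoint spaces (so $K(f,t;L^p,M^{1,p})\approx \|f\|_{L^p}$ for $t\gtrsim 1$, and the small-$t$ behaviour is governed by the homogeneous $K$-functional). So the heart of the matter is \eqref{eq: interpolation homogeneous}, and for that I would discretize: since $\mu$ is doubling, $\big(\int_0^\infty (t^{-s}K(f,t))^q\,dt/t\big)^{1/q}\approx \big(\sum_{k\in\z}(2^{ks}K(f,2^{-k}))^q\big)^{1/q}$, so it suffices to compare $K(f,2^{-k};L^p,\dot M^{1,p})$ with the $k$-th "layer" of a fractional $s$-gradient of $f$.

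For the inclusion $(L^p,\dot M^{1,p})_{s,q}\hookrightarrow \dot N^s_{p,q}$, the plan is: given $f\in L^p+\dot M^{1,p}$, for each $k$ pick a near-optimal decomposition $f=f_k^0+f_k^1$ with $\|f_k^0\|_{L^p}+2^{-k}\|f_k^1\|_{\dot M^{1,p}}\le 2K(f,2^{-k};L^p,\dot M^{1,p})$. Let $h_k\in\mathcal D(f_k^1)$ be a near-optimal $1$-gradient. Then for $x,y$ with $2^{-k-1}\le d(x,y)<2^{-k}$ one writes $|f(x)-f(y)|\le |f_k^0(x)|+|f_k^0(y)|+|f_k^1(x)-f_k^1(y)|\le d(x,y)^s\big(2^{(k+1)s}|f_k^0(x)|+2^{(k+1)s}|f_k^0(y)|+ d(x,y)^{1-s}(h_k(x)+h_k(y))\big)$, and since $d(x,y)^{1-s}\le 2^{-k(1-s)}$ this gives a fractional $s$-gradient with $k$-th term $g_k=C\big(2^{ks}|f_k^0|+2^{-k(1-s)}h_k\big)$; summing $\|g_k\|_{L^p}$ in $\ell^q$ produces exactly $\big(\sum_k (2^{ks}K(f,2^{-k}))^q\big)^{1/q}$, up to constants. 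The mild technical point here is that one should verify $f=f_k^0+f_k^1$ can be arranged with $f_k^1$ genuinely in $\dot M^{1,p}$ and that the resulting $(g_k)$ is legitimately in $\D^s(f)$ after removing a null set (a countable union over $k$ of the exceptional sets).

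For the reverse inclusion $\dot N^s_{p,q}\hookrightarrow(L^p,\dot M^{1,p})_{s,q}$, given $f$ with $(g_k)\in\D^s(f)$ that is near-optimal for $\|f\|_{\dot N^s_{p,q}}$, I need, for each $k$, a decomposition $f=f_k^0+f_k^1$ with $\|f_k^0\|_{L^p}\lesssim 2^{-ks}\|(g_j)_{j\ge k}\text{-type sum}\|_{L^p}$ and $\|f_k^1\|_{\dot M^{1,p}}\lesssim 2^{k(1-s)}(\cdots)$, so that $K(f,2^{-k};L^p,\dot M^{1,p})\lesssim 2^{-ks}G_k$ for a suitable $G_k$ with $\|(2^{ks}\cdot 2^{-ks}G_k)_k\|_{\ell^q(L^p)}\lesssim\|f\|_{\dot N^s_{p,q}}$. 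The natural construction is a discrete convolution / regularization at scale $2^{-k}$: take a Lipschitz partition of unity subordinate to a cover of $X$ by balls of radius $\approx 2^{-k}$ with bounded overlap, set $f_k^1=\sum_i m_f(B_i)\varphi_i$ (using median values $m_f$ from Lemma \ref{median lemma} so the argument survives $p<1$), and $f_k^0=f-f_k^1$. Then $f_k^0$ is controlled by the Poincaré-type oscillation estimate (Lemma \ref{lemma: Poincare}, or rather its $L^p$/median variant analogous to Lemma \ref{median difference}), giving $|f-f_k^1|$ pointwise bounded by $2^{-ks}$ times a maximal-type average of $\sum_{j\ge k-c}2^{-(j-k)\sigma}g_j$; while the Lipschitz constant of $f_k^1$ is $\lesssim 2^{k}\max_i|m_f(B_i)-m_f(B_{i'})|$ over neighbouring balls, again bounded via Lemma \ref{median difference} by $2^{k}\cdot 2^{-ks}(\cdots)=2^{k(1-s)}(\cdots)$, and a function with a uniform Lipschitz-type gradient bound lies in $\dot M^{1,p}$ with that gradient. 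Finally one sums over $k$ in $\ell^q$, using Lemma \ref{summing lemma} (with $a>1$, $b=\min\{p,q\}$-type exponent) to absorb the geometric tails $\sum_{j\ge k-c}2^{-(j-k)\sigma}g_j$ and the Hardy--Littlewood maximal operator's $L^{p/\cdots}$-boundedness (applicable after raising to a small power, as usual for $p<1$) to handle the averages, recovering $\|(g_k)\|_{\ell^q(L^p)}$.

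The main obstacle I anticipate is precisely this last summation in the regime $0<p<1$ and/or $0<q<1$: the maximal function is not bounded on $L^p$ for $p\le 1$, so the Poincaré oscillation terms must be handled by first passing to $\big(\,\vint{}g_j^{t}\big)^{1/t}$ with $t<p$ as in Lemma \ref{median difference} and then invoking the $L^{p/t}$-boundedness of $\M$, and one must keep careful track of which exponent the inequality \eqref{a sum} is applied with when interchanging the $\ell^q$-sum with the $\ell^p$-integration and with the geometric sum over scales. Organizing the bookkeeping so that Lemma \ref{summing lemma} applies cleanly (matching $b$ to $\min\{1,p,q\}$ appropriately in each sub-estimate) is the delicate part; the geometric/measure-theoretic constructions (bounded-overlap covers of radius $2^{-k}$, Lipschitz partitions of unity) are standard in doubling spaces and I would cite them rather than reconstruct them.
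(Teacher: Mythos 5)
Your overall architecture is close to the paper's: the reduction of \eqref{eq: interpolation} to \eqref{eq: interpolation homogeneous} via $K(f,t;L^p,M^{1,p})\approx K(f,t;L^p,\dot M^{1,p})+\min\{1,t\}\|f\|_{L^p}$ is exactly the paper's step \eqref{2K}, and your hard direction uses the same regularization the paper does (a bounded-overlap cover at scale $\approx 2^{-k}$, a Lipschitz partition of unity, and $f_k^1=\sum_i m_f(B_i)\ph_i$ built from medians). Your easy direction, $(L^p,\dot M^{1,p})_{s,q}\hookrightarrow\dot N^s_{p,q}$, is correct and in fact more direct than the paper's: you assemble a fractional $s$-gradient $g_k\approx 2^{ks}|f_k^0|+2^{-k(1-s)}h_k$ from near-optimal decompositions at each dyadic scale, whereas the paper proves $E_p(f,t)\le CK(f,t;L^p,\dot M^{1,p})$ and then invokes the equivalence \eqref{equivalence of Besov norms} of $\dot N^s_{p,q}$ with the modulus-of-smoothness space $\dot\cB^s_{p,q}$ from \cite{GKZ}; both work, and yours avoids that external equivalence.

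The genuine gap is in the reverse embedding, at the point where you place $f_k^1$ in $\dot M^{1,p}$. First, "a function with a uniform Lipschitz-type gradient bound lies in $\dot M^{1,p}$ with that gradient" is false as stated: membership in $\dot M^{1,p}$ requires a Haj\l asz gradient in $L^p(X)$, and a constant (the Lipschitz bound $\approx 2^{k(1-s)}(\cdots)$) is not in $L^p$ when $\mu(X)=\infty$. Second, the intended pointwise fix --- bounding $|f_k^1(x)-f_k^1(y)|$ by $d(x,y)$ times scale-$2^{-k}$ oscillation data at $x$ and $y$, via median differences of neighbouring balls and Lemma \ref{median difference} --- only works when $d(x,y)\lesssim 2^{-k}$, because only then can one use $\sum_i(\ph_i(x)-\ph_i(y))=0$ with all relevant balls comparable to $B(x,2^{-k})$. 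For $d(x,y)\gg 2^{-k}$ one must instead write $|f_k^1(x)-f_k^1(y)|\le|f(x)-f(y)|+|f_k^0(x)|+|f_k^0(y)|$ and control $|f(x)-f(y)|$ by $d(x,y)$ times a sharp-maximal-type function involving oscillations of $f$ at \emph{all} scales $r\ge 2^{-k}$ (the paper's $f^\sharp_t$). This is not cosmetic: it means $K(f,2^{-k};L^p,\dot M^{1,p})$ is bounded not by $2^{-ks}$ times a single-scale quantity but by a tail over coarser scales, as in the upper bound of \eqref{KfunctionalEstimate}, $K(f,t)\le C\big(\sum_{j\ge0}2^{-j\tilde p}E_p^{\tilde p}(f,2^jt)\big)^{1/\tilde p}$ with $\tilde p=\min\{p,1\}$; the subsequent $\ell^q$-summation is then a Hardy-type inequality which the paper handles by splitting into $q\le\tilde p$ (via \eqref{a sum}) and $q\ge\tilde p$ (via Minkowski), not merely an application of Lemma \ref{summing lemma} at a fixed scale. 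Once you insert this large-distance case and the coarse-scale tail (and obtain the pointwise bound on $|f-f_k^1|$ by telescoping medians down to Lebesgue points), your plan goes through and essentially reproduces the paper's argument.
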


\begin{proof}
To prove \eqref{eq: interpolation homogeneous}, we first show that there exists a constant $C>0$ such that for all $t>0$,
\begin{equation}\label{KfunctionalEstimate}
C^{-1}E_p(f,t)\le K(f,t;L^p(X),\dot M^{1,p}(X))
\le C\bigg(\sum_{k=0}^{\infty}2^{-k\tilde p}E_p^{\tilde p}(f,2^{k}t)\bigg)^{1/\tilde p},
\end{equation}
where $\tilde p=\min\{p,1\}$ and $E_p(f,t)$ is as in \eqref{eput}.

We begin with the first inequality in \eqref{KfunctionalEstimate}.
Let $f=g+h$, where $g\in L^p(X)$ and $h\in \dot M^{1,p}(X)$, and let $t>0$. Then
\[
E_p(f,t)\le C(E_p(g,t)+E_p(h,t)),
\]
where, by the Fubini theorem,
\begin{equation}\label{B int}
\begin{aligned}
E^p_p(g,t)
&\le 2^p\int_X |g(x)|^p\,d\mu(x)+2^p\int_X\vint{B(x,t)}|g(y)|^p\,d\mu(y)d\mu(x)\\
&= 2^p\int_X |g(x)|^p\,d\mu(x)+2^p\int_X|g(y)|^p\int_{B(y,t)}\frac{1}{\mu(B(x,t))}\,d\mu(x)\,d\mu(y)\\
&\le C\|g\|_{L^p(X)}^p.
\end{aligned}
\end{equation}
The last estimate follows using the doubling property of $\mu$ and the fact that $B(y,t)\subset B(x,2t)$ for each $x\in B(y,t)$.

By the definition of the $1$-gradient and by the similar argument as in
\eqref{B int}, for every $\rho\in \mathcal D(h)\cap L^p(X)$, we have,
\[
\begin{split}
E^p_p(h,t)&=\int_X\vint{B(x,t)}|h(x)-h(y)|^p\,d\mu(y)d\mu(x)\\
&\le \int_X\vint{B(x,t)}(d(x,y))^p(\rho(x)+\rho(y))^p\,d\mu(y)d\mu(x)\\
&\le Ct^p\Big(\int_X \rho(x)^p\,d\mu(x)+ \int_X\vint{B(x,t)}\rho(y)^p\,d\mu(y)d\mu(x)\Big)\\
&\le Ct^p\|\rho\|_{L^p(X)}^p\,.
\end{split}
\]
By taking the infimum over all representations of $f$ in $L^p(X)+\dot M^{1,p}(X)$, we have that
$E_p(f,t)\le CK(f,t;L^p(X),\dot M^{1,p}(X))$.
\medskip

To prove the second inequality in \eqref{KfunctionalEstimate}, let $f\in L^p_{\text{loc}}(X)$ and let $t>0$.
By a standard covering argument, there is a covering of $X$ by balls $B_i=B(x_i, t/6)$, $i\in\n$,
such that $\sum_i\ch{2B_i}\le N$ with the overlap constant $N>0$ depending only on the doubling constant of $\mu$.

Let $\{\ph_i\}_{i\in\n}$ be a collection of $Ct^{-1}$-Lipschitz functions $\ph_i\colon X\to[0,1]$
such that $\operatorname{supp}\ph_i\subset 2B_i$ and $\sum_i\ph_i(x)=1$ for all $x\in X$, (this is a so called partition of unity 
subordinate to the covering $\{B_i\}_{i\in\n}$, see also the beginning of Section \ref{section: extensions}).

Let $h\colon X\to \re$ be a function defined using median values \eqref{median} of $f$,
\[
h(x)=\sum_{i\in\n}m_f(B_i)\ph_i(x),\quad\text{for all }x\in X,
\]
and let $g=f-h$.

Let $x\in X$ and let $I_x=\{i:\,x\in 2B_i\}$. By the properties of the partition of unity,
\[
g(x)=\sum_{i\in\n}\big(f(x)-m_f(B_i)\big)\ph_i(x)
=\sum_{i\in I_x}(f(x)-m_f(B_i))\ph_i(x).
\]
Since the number of elements in $I_x$ is bounded by the overlap constant $N$ independent of $x$ and $t$,
and, for every $i\in I_x$, $B_i\subset B(x,t)\subset 8B_i$, using \eqref{median ie} and the doubling property of $\mu$, we 
obtain
\begin{equation}\label{estimate for g}
|g(x)|
\le 2\sum_{i\in I_x}\Big(\,\vint{B_i}|f(x)-f(z)|^p\,d\mu(z)\Big)^{1/p}
\le C\Big(\,\vint{B(x,t)}|f(x)-f(z)|^p\,d\mu(z)\Big)^{1/p},
\end{equation}
which implies that
\begin{equation}\label{gEp}
\| g\|_{L^p(X)}\le CE_p(f,t).
\end{equation}

Next we estimate $h$ in the $ \dot M^{1,p}$-norm. Let $t>0$ and let $x,y\in X$. We consider two cases.

\noindent{\sc Case 1}:
If $d(x,y)\le t$, then $B_i\subset B(x,2t)\subset 20B_i$, for every $i\in I_x\cup I_y$.
Using the properties of the functions $\ph_i$, we have
\[
\begin{split}
h(x)-h(y)&=\sum_{i\in\n}(m_f(B_i)-f(x))(\ph_i(x)-\ph_i(y))\\
&=\sum_{i\in I_x\bigcup I_y}(m_f(B_i)-f(x))(\ph_i(x)-\ph_i(y)),
\end{split}
\]
which together with the $Ct^{-1}$-Lipschitz continuity of the functions $\ph_i$ and \eqref{median ie} implies that
 \[
\begin{split}
|h(x)-h(y)|
&\le C\,\frac{d(x,y)}{t}\sum_{i\in I_x\cup I_y}\Big(\,\vint{B_i}|f(x)-f(z)|^p\,d\mu(z)\Big)^{1/p}\\
&\le C\,\frac{d(x,y)}{t}\Big(\,\vint{B(x,2t)}|f(x)-f(z)|^p\,d\mu(z)\Big)^{1/p}.
\end{split}
\]

\noindent{\sc Case 2}:
Let $d(x,y)>t$. Since
\[
|h(x)-h(y)|\le |f(x)-f(y)|+|g(x)|+|g(y)|,
\]
it suffices to estimate the terms on the right side.
The assumption $d(x,y)>t$ and \eqref{estimate for g} imply that
\[
|g(x)|\le C\,\frac{d(x,y)}{t}\Big(\,\vint{B(x,t)}|f(z)-f(x)|^p\,d\mu(z)\Big)^{1/p},
\]
and a corresponding upper bound holds for $|g(y)|$.

Using \eqref{median ie} and the doubling property of $\mu$ and writing $R=d(x,y)$, we obtain
\begin{align*}
 |f(x)-f(y)|
 &\le |f(x)-m_f(B(x,R))|+|f(y)-m_f(B(x,R))|\\
&\le 2\Big(\,\vint{B(x,R)}|f(z)-f(x)|^p\,d\mu(z)\Big)^{1/p}\\
&\quad
+C\Big(\,\vint{B(y,2R)}|f(z)-f(y)|^p\,d\mu(z)\Big)^{1/p},
\end{align*}
and, hence,
\[
|f(x)-f(y)|\le Cd(x,y)(f^\sharp_t(x)+f^\sharp_t(y)),
\]
where
\[
f^\sharp_t(x)
=\sup_{r\ge t}\frac{1}{r}\Big(\,\vint{B(x,r)}|f(z)-f(x)|^p\,d\mu(z)\Big)^{1/p}\,.
\]
Collecting the estimates, we obtain, in both cases, that
\[
|h(x)-h(y)|\le Cd(x,y)(f^\sharp_t(x)+f^\sharp_t(y)),
\]
which shows that $f^\sharp_t\in\mathcal D(h)$. Hence it suffices to estimate $\|f^\sharp_t\|_{L^p(X)}$.

Using the definition of $f^\sharp_t$ and the doubling property of $\mu$, we have
\begin{align*}
f^\sharp_t(x)
&\le \sum_{k=0}^{\infty}\sup_{2^{k-1}t<r\le 2^{k}t}\frac{1}{r}\Big(\,\vint{B(x,r)}|f(z)-f(x)|^p\,d\mu(z)\Big)^{1/p}\\
&\le \frac{C}{t}\sum_{k=0}^{\infty}2^{-k}\Big(\,\vint{B(x,2^{k} t)}|f(z)-f(x)|^p\,d\mu(z)\Big)^{1/p}\,.
\end{align*}
If $0<p\le 1$, we use inequality \eqref{a sum} and obtain
\begin{equation}\label{fEp1}
\begin{aligned}
\|f^\sharp_t\|^p_{L^p(X)}
&\le \frac{C}{t^p}\sum_{k=0}^{\infty}2^{-kp}\int_X\vint{B(x,2^{k} t)}|f(z)-f(x)|^p\,d\mu(z)\,d\mu(x)\\
&=\frac{C}{t^p}\sum_{k=0}^{\infty}2^{-kp}E_p^{p}(f,2^{k}t)\,.
\end{aligned}
\end{equation}

When $p>1$, we have, by the Minkowski inequality, that
\begin{equation}\label{fEp2}
\begin{aligned}
\|f^\sharp_t\|_{L^p(X)}\le \frac{C}{t}\sum_{k=0}^\infty 2^{-k}E_p(f,2^kt).
\end{aligned}
\end{equation}
Thus, the required inequality
\[
K(f,t;L^p(X), \dot M^{1,p}(X))
\le C\Big(\sum_{k=0}^{\infty}2^{-k\tilde p}E_p^{\tilde p}(f,2^{k}t)\Big)^{1/\tilde p}
\]
follows using \eqref{gEp}-\eqref{fEp2} and the definition of the $K$-functional.
\smallskip

\noindent{\bf Interpolation result \eqref{eq: interpolation homogeneous} for $\dot N^{s}_{p,q}(X)$:}
The equivalence of Besov norms \eqref{equivalence of Besov norms}, the definition of the norm $\|f\|_{\dot\cB^s_{p,q}(X)}$ 
and
the first inequality in \eqref{KfunctionalEstimate} imply that
\[
\|f\|_{\dot N^s_{p,q}(X)}\le C\|f\|_{(L^p(X),\dot M^{1,p}(X))_{s,q}}.
\]
To obtain the opposite estimate, we use the second inequality of \eqref{KfunctionalEstimate}.

If $q<\infty$, then
\[
\begin{split}
\|f\|_{(L^p(X),\dot M^{1,p}(X))_{s,q}}&=
\bigg(\int_0^\infty\big(t^{- s}K(f,t;L^p(X),\dot M^{1,p}(X))\big)^q\frac{dt}{t}\bigg)^{1/q}\\
&\le C\bigg(\int_0^\infty \bigg(\sum_{k=0}^{\infty}
     t^{- s \tilde p}2^{-k\tilde p}E_p^{\tilde p}(f,2^{k}t)\bigg)^{q/\tilde p}\frac{dt}{t}\bigg)^{1/q},
\end{split}
\]
where the last estimate is denoted by $A$.
If $q\le \tilde p$, then using \eqref{a sum} and change of variables, we obtain
\[
\begin{split}
A
&\le \bigg(\sum_{k=0}^{\infty}2^{-kq}\int_0^\infty\Big(t^{- s}E_p(f,2^{k}t)\Big)^{q}\frac{dt}{t}\bigg)^{1/q}\\
&= \bigg(\sum_{k=0}^{\infty}2^{-kq}\int_0^\infty\Big((2^{-k}\tau)^{- s}E_p(f,\tau)\Big)^{q}\frac{d\tau}{\tau}\bigg)^{1/q}\\
&= \bigg(\sum_{k=0}^{\infty}2^{kq( s-1)}\bigg)^{1/q}\bigg( \int_0^\infty\Big(\tau^{- s}E_p(f,\tau)\Big)^{q}\frac{d\tau}{\tau}
\bigg)^{1/q}.
\end{split}
\]
If $q\geq \tilde p$, then, using the Minkowski inequality and changing variables, we have
\[
\begin{split}
A
&\le \bigg(\sum_{k=0}^{\infty}\bigg(\int_0^\infty\Big(t^{- s \tilde p}2^{-k\tilde p}E_p^{\tilde p}(f,2^{k}t)\Big)^{q/\tilde p}\frac{dt}{t}
\bigg)^{\tilde p/q}\bigg)^{1/\tilde p}\\
&=\bigg(\sum_{k=0}^{\infty}\bigg(\int_0^\infty\Big((2^{-k}\tau)^{- s\tilde p}2^{-k\tilde p}E_p^{\tilde p}(f,\tau)\Big)^{q/\tilde p}\frac{d
\tau}{\tau}\bigg)^{\tilde p/q}\bigg)^{1/\tilde p}\\
&=\bigg(\sum_{k=0}^{\infty}2^{k\tilde p( s-1)}\bigg)^{1/\tilde p}\bigg(\int_0^\infty\Big(\tau^{- s}E_p(f,\tau)\Big)^{q}\frac{d\tau}
{\tau}\bigg)^{1/q}.
\end{split}
\]
In the case $q=\infty$, we have, using \eqref{KfunctionalEstimate},
\[
\begin{split}
\|f\|_{(L^p(X),\dot M^{1,p}(X))_{s,\infty}}
&=\sup_{t>0}t^{-s}K(f,t;L^p(X),\dot M^{1,p}(X))\\
&\le C\sup_{t>0}t^{-s}\bigg(\sum_{k=0}^{\infty}2^{-k\tilde p}E_p^{\tilde p}(f,2^{k}t)\bigg)^{1/\tilde p}\\
&= C\sup_{t>0}\bigg(\sum_{k=0}^{\infty}2^{k\tilde p(s-1)}\Big((2^kt)^{-s}E_p(f,2^{k}t)\Big)^{\tilde p} \bigg)^{1/\tilde p}\\
&\le C\Big(\sum_{k=0}^{\infty}2^{k\tilde p( s-1)}\Big)^{1/\tilde p}\sup_{t>0}t^{-s}E_p(f,t).
\end{split}
\]

By the comparability of the norms in Besov spaces given by different definitions, \eqref{equivalence of Besov norms}, we have
\[
\|f\|_{(L^p(X),\dot M^{1,p}(X))_{s,q}}\le C\|f\|_{\dot N^s_{p,q}(X)}.
\]
\smallskip

\noindent{\bf Interpolation result \eqref{eq: interpolation} for $N^{s}_{p,q}(X)$:}
We start by showing that
\begin{equation}\label{2K}
K(f,t,L^p(X),M^{1,p}(X))\approx K(f,t;L^p(X),\dot M^{1,p}(X))+ \min\{1,t\}\|f\|_{L^p(X)}
\end{equation}
for each $f\in L^p(X)+M^{1,p}(X)$ and every $t>0$.

Let $f$ be such a function and let $t>0$.
The definition of the $K$-functional and the spaces $\dot M^{1,p}(X)$ and $M^{1,p}(X)$ imply that
\[
K(f,t;L^p(X),\dot M^{1,p}(X))\le K(f,t,L^p(X),M^{1,p}(X)).
\]
For every $g\in L^p(X)$ and $h\in M^{1,p}(X)$ with $f=g+h$, we have
\[
\begin{split}
\min\{1,t\}\|f\|_{L^p(X)}
&\le C\big(\min\{1,t\}\|g\|_{L^p(X)}+\min\{1,t\}\|h\|_{L^p(X)}\big)\\
&\le C\big(\|g\|_{L^p(X)}+t\|h\|_{M^{1,p}(X)}\big),
\end{split}
\]
which implies that
\[
\min\{1,t\}\|f\|_{L^p(X)}\le CK(f,t,L^p(X),M^{1,p}(X)).
\]
This implies one direction of inequality \eqref{2K}.

For the other direction, assume first that $t>1$. Then the claim follows from the fact that
\[
K(f,t,L^p(X),M^{1,p}(X))\le \|f\|_{L^p(X)}.
\]
If $0<t<1$, let $g\in L^p(X)$ and $h\in \dot M^{1,p}(X)$ be such that $f=g+h$.
Then $h\in L^p(X)$ with $\|h\|_{L^p(X)}\le C(\|f\|_{L^p(X)}+\|g\|_{L^p(X)})$ and
\begin{align*}
K(f,t,L^p(X),M^{1,p}(X))
&\le  \|g\|_{L^p(X)}+t\|h\|_{M^{1,p}(X)}\\
&= \|g\|_{L^p(X)}+t\|h\|_{L^p(X)}+t\|h\|_{\dot M^{1,p}(X)}\\
&\le C\big(\|g\|_{L^p(X)}+t\|h\|_{\dot M^{1,p}(X)}+t\|f\|_{L^p(X)}\big),
\end{align*}
from which the claim follows by taking the infimum over such decompositions $f=g+h$.

Since
\[
\int_0^\infty\big(t^{- s}\min\{1,t\}\big)^q\frac{dt}{t}<\infty \ \ \text{ and } \ \ \sup_{t>0}t^{-s}\min\{1,t\}<\infty,
\]
\eqref{2K} implies that
\[
\|f\|_{(L^p(X),M^{1,p}(X))_{s,q}}\approx \|f\|_{(L^p(X),\dot M^{1,p}(X))_{s,q}}+\|f\|_{L^p(X)},
\]
and, hence, \eqref{eq: interpolation} follows from \eqref{eq: interpolation homogeneous}.
\end{proof}

\begin{remark}\label{extension interpolation}
Since each linear operator which is bounded in $L^p$ and in $M^{1,p}$, is bounded in the interpolation space,
the extension results for Besov space with $p\ge 1$ follow from Theorem \ref{thm: interpolation} and the extension results in 
\cite{HKT_Revista}.
\end{remark}

Theorem \ref{thm: interpolation} and the reiteration theorem \cite[Thm 3.1]{Ho} imply the following interpolation theorem for 
Haj\l asz--Besov spaces. In the Euclidean setting, this result was proved in \cite{DP} using different methods.
For related interpolation results in the metric setting, see \cite{Y2}, \cite{HMY} and \cite{GKS}.

\begin{theorem}
Let $X$ be a metric space with a doubling measure $\mu$.
Let $0<p<\infty$, $0<q,q_0,q_1\le\infty$, $0<s_0,s_1,\lambda<1,$ and $s=(1-\lambda)s_0+\lambda s_1$.
Then
\[
\big(\dot N^{s_0}_{p,q_0}(X),\dot N^{s_1}_{p,q_1}(X) \big)_{\lambda,q}=\dot N^s_{p,q}(X)
\]
and
\[
\big( N^{s_0}_{p,q_0}(X),N^{s_1}_{p,q_1}(X) \big)_{\lambda,q}=N^s_{p,q}(X)
\]
with equivalent norms.
\end{theorem}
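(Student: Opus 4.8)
The plan is to obtain the Besov--Besov interpolation identity from the already-established identity $N^s_{p,q}(X) = (L^p(X), M^{1,p}(X))_{s,q}$ of Theorem~\ref{thm: interpolation} by invoking the reiteration (stability) theorem for real interpolation. First I would set $A_0 = L^p(X)$ and $A_1 = M^{1,p}(X)$, viewed as quasi-normed spaces continuously embedded into the ambient space $L^p_{\text{loc}}(X)$, and record that for $0<s<1$ and $0<q\le\infty$ Theorem~\ref{thm: interpolation} gives $N^s_{p,q}(X) = (A_0,A_1)_{s,q}$ with equivalent norms. In particular, the two spaces $N^{s_0}_{p,q_0}(X)$ and $N^{s_1}_{p,q_1}(X)$ appearing in the statement are exactly $(A_0,A_1)_{s_0,q_0}$ and $(A_0,A_1)_{s_1,q_1}$; they are both intermediate spaces with respect to the couple $(A_0,A_1)$, and moreover they are of the correct Gagliardo class so that the reiteration theorem applies with the indices $\theta_i = s_i$.

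Next I would quote the reiteration theorem in the form valid for quasi-normed couples, namely \cite[Thm~3.1]{Ho}: if $0<s_0,s_1<1$ with $s_0\neq s_1$, $0<q_0,q_1\le\infty$, and $\lambda\in(0,1)$, then setting $s = (1-\lambda)s_0 + \lambda s_1$ one has
\[
\big((A_0,A_1)_{s_0,q_0},\,(A_0,A_1)_{s_1,q_1}\big)_{\lambda,q} = (A_0,A_1)_{s,q}
\]
with equivalent norms, for every $0<q\le\infty$. (The case $s_0 = s_1$ is excluded by the hypothesis that $s_0,s_1,\lambda$ are such that the formula $s=(1-\lambda)s_0+\lambda s_1$ is nondegenerate; if one wishes to also allow $s_0=s_1$, the identity becomes trivial after noting $(A,B)_{\lambda,q}$ stability, but the interesting case is $s_0\neq s_1$.) Substituting the identification from Theorem~\ref{thm: interpolation} into the left- and right-hand sides immediately yields
\[
\big(N^{s_0}_{p,q_0}(X),\,N^{s_1}_{p,q_1}(X)\big)_{\lambda,q} = N^s_{p,q}(X)
\]
with equivalent norms, which is the inhomogeneous assertion.

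For the homogeneous statement I would argue in the same way, replacing $A_1 = M^{1,p}(X)$ by $\dot M^{1,p}(X)$ and using the homogeneous identity \eqref{eq: interpolation homogeneous} from Theorem~\ref{thm: interpolation}: since $\dot N^{s}_{p,q}(X) = (L^p(X),\dot M^{1,p}(X))_{s,q}$, the reiteration theorem gives $\big(\dot N^{s_0}_{p,q_0}(X),\dot N^{s_1}_{p,q_1}(X)\big)_{\lambda,q} = (L^p(X),\dot M^{1,p}(X))_{s,q} = \dot N^s_{p,q}(X)$. One subtlety worth a sentence is that the homogeneous spaces are only semi-normed (constants have zero semi-norm), so strictly one works modulo constants or, as is standard, one interprets the interpolation functor on the corresponding couple in $L^p_{\text{loc}}/\{\text{constants}\}$; this causes no real difficulty because \cite[Thm~3.1]{Ho} is stated for quasi-semi-normed couples.

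The main obstacle is a bookkeeping one rather than a substantive one: verifying that the hypotheses of the quoted reiteration theorem \cite[Thm~3.1]{Ho} are met in the quasi-normed ($0<p<1$ or $q<1$) regime --- in particular that $L^p(X)$ and $M^{1,p}(X)$ form an admissible couple and that the reiteration formula is available without the usual Banach-space assumptions. I expect this to be exactly what \cite[Thm~3.1]{Ho} was designed to handle, so the proof reduces to citing it correctly and matching indices; no new estimates beyond Theorem~\ref{thm: interpolation} should be needed.
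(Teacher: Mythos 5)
Your proposal is correct and matches the paper's argument, which likewise obtains the result by combining Theorem \ref{thm: interpolation} with the reiteration theorem \cite[Thm 3.1]{Ho} for the couples $(L^p(X),\dot M^{1,p}(X))$ and $(L^p(X),M^{1,p}(X))$; your remarks on the quasi-normed setting and the degenerate case $s_0=s_1$ are reasonable added care, not a deviation.
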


\subsection{An Embedding theorem for the Haj\l  asz--Besov spaces}
Our interpolation theorem implies a Sobolev type embedding result for the Haj\l asz--Besov spaces.
The embedding is into the Lorentz spaces.

Recall that
the Lorentz space $L^{p,q}(X)$, $0<p<\infty$, $0<q\le\infty$, consists of measurable functions
$u\colon X\to[-\infty,\infty]$, for which the (quasi)norm
\[
\| u\|_{L^{p,q}(X)}=p^{1/q}\bigg(\int_0^\infty t^{q}\mu\big(\{x\in X:\,|u(x)|\geq t\}\big)^{q/p}\,\frac{d t}{t}\bigg)^{1/q}\,,
\]
when $q<\infty$, and
\[
\| u\|_{L^{p,\infty}(X)}=\sup_{t>0} t \mu\big(\{x\in X:\,|u(x)|>t\}\big)^{1/p},
\]
when $q=\infty$, is finite. Using the Cavalieri principle, it is easy to see that $L^{p,p}(X)=L^p(X)$.
Moreover, $L^{p,\infty}(X)$ equals weak $L^p(X)$-space and $L^{p,q}(X)\subset L^{p,r}(X)$ when $r>q$.

In the Euclidean setting, embedding $\cB^s_{p,q}(\rn)\hookrightarrow L^{p^*(s),q}(\rn)$ was obtained in \cite[Thm 1.15]{HS} 
using an
atomic decomposition of $\cB^s_{p,q}(\rn)$.
In the metric case, the embedding of Besov spaces $\cB^s_{p,q}(X)$,
$p>1$, $q\ge 1$, to Lorentz spaces was proved in \cite{GKS} under the assumption that $X$ supports a $(1,p)$-Poincar\'e 
inequality.
The idea of our proof comes from \cite[Thm 5.1]{GKS}.
For the readers' convenience, we give the proof with all details.

\begin{theorem}\label{thm: Lorentz-Sobolev}
Let $X$ be a $Q$-regular metric space, $Q\ge 1$.
Let $0<s<1$, $0<p<Q/s$ and $0<q\le\infty$.
There is a constant $C>0$ such that
\begin{equation}\label{WeakTypeBesov-Poincare}
\inf_{c\in\re}\Vert u-c\Vert_{L^{p^*(s),q}(X)}
\le C\Vert u\Vert_{\dot N^{s}_{p,q}(X)}\,,
\end{equation}
where $p^*(s)=Qp/(Q-sp)$.
\end{theorem}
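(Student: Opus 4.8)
The plan is to deduce the Lorentz-Sobolev embedding \eqref{WeakTypeBesov-Poincare} from the interpolation identity \eqref{eq: interpolation homogeneous} by interpolating a known endpoint Sobolev inequality for the Haj\l asz space $\dot M^{1,p}$ against the trivial embedding $L^p\hookrightarrow L^p$, and by identifying the resulting real-interpolation space on the target side as a Lorentz space. More precisely, I would proceed as follows.

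\textbf{Step 1: the two endpoint estimates.} On one endpoint, $L^p(X)=L^{p,p}(X)$, so the identity map $L^p(X)\to L^{p,p}(X)$ is bounded. On the other endpoint, $\dot M^{1,p}(X)$ embeds into a Sobolev-type space: since $X$ is $Q$-regular, Lemma~\ref{lemma:Sobolev-Poincare 1} with $s=1$, together with a standard maximal-function / weak-type argument (or the known Sobolev inequality for $\dot M^{1,p}$ on $Q$-regular spaces), gives
\[
\inf_{c\in\re}\Vert u-c\Vert_{L^{p^{**},\infty}(X)}\le C\Vert u\Vert_{\dot M^{1,p}(X)},
\]
for the appropriate Sobolev exponent $p^{**}=Qp/(Q-p)$ when $p<Q$ (and a suitable exponential/BMO-type substitute, or direct reduction, otherwise). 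Actually, since I only need the case $0<p<Q/s$ with $0<s<1$ and the final exponent is $p^*(s)=Qp/(Q-sp)$, the cleaner route is to work with the \emph{weak-type} endpoint for $\dot M^{1,p}$ into $L^{p^{**},\infty}$ and keep track of the correct interpolation parameter.

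\textbf{Step 2: interpolate.} Apply the $K$-functional machinery. The point is the general fact (Holmstedt's formula, or the elementary computation of the $K$-functional between $L^{p_0,q_0}$ and $L^{p_1,q_1}$) that
\[
\big(L^{p}(X),\,L^{p^{**},\infty}(X)\big)_{\theta,q}=L^{p^*(s),q}(X)
\]
with equivalent norms, where $\theta$ is chosen so that $\tfrac{1}{p^*(s)}=\tfrac{1-\theta}{p}+\tfrac{\theta}{p^{**}}$; a short computation shows this forces $\theta=s$ (using $p^{**}=Qp/(Q-p)$ and $p^*(s)=Qp/(Q-sp)$). Combining this with Theorem~\ref{thm: interpolation}, namely $\dot N^s_{p,q}(X)=(L^p(X),\dot M^{1,p}(X))_{s,q}$, and with the boundedness of the (nonlinear, but $K$-functional-compatible) endpoint maps from Step~1, the $K$-functional monotonicity gives, for every $t>0$,
\[
K\big(u,t;L^{p}(X),L^{p^{**},\infty}(X)\big)\le C\,K\big(u,t;L^p(X),\dot M^{1,p}(X)\big)
\]
modulo translating $u$ by constants (handled by taking $\inf_{c}$ on the left). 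Multiplying by $t^{-s}$, raising to the $q$-th power and integrating $dt/t$ then yields \eqref{WeakTypeBesov-Poincare}.

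\textbf{Main obstacle.} The delicate points are two. First, the extension/embedding maps here are not linear in the median-value setting, so one cannot quote ``bounded linear operators are bounded on interpolation spaces'' verbatim; instead one must argue directly at the level of $K$-functionals, i.e. show that any decomposition $u=g+h$ with $g\in L^p$, $h\in\dot M^{1,p}$ produces, after applying the endpoint inequalities to $g$ and $h$ separately and subtracting the constant $c$, a decomposition realizing the target $K$-functional — this is where the $\inf_{c\in\re}$ on the left of \eqref{WeakTypeBesov-Poincare} is essential and must be inserted at the right moment. Second, one must carefully verify the Lorentz-space interpolation identity and the arithmetic $\theta=s$; the $q=\infty$ case (which is the one actually used later in the paper) needs the $\sup$-version of all the estimates and the elementary bound $\sup_{t>0}t^{-s}\min\{1,t\}<\infty$ is not enough here since we are in the homogeneous setting — one instead uses the exact homogeneity $E_p(u,\lambda t)$ scaling as in the proof of Theorem~\ref{thm: interpolation}. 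I expect the bookkeeping in the non-linear $K$-functional estimate to be the part that needs the most care.
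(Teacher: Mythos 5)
Your proposal follows essentially the same route as the paper's proof: combine the interpolation identity $\dot N^{s}_{p,q}(X)=(L^p(X),\dot M^{1,p}(X))_{s,q}$ of Theorem~\ref{thm: interpolation} with an endpoint Sobolev inequality for $\dot M^{1,p}$ and a Holmstedt-type identification of the resulting real interpolation space with the Lorentz space $L^{p^*(s),q}(X)$, taking care of the constant $c$ at the level of decompositions rather than via boundedness of a linear operator. The only (inessential) difference is that the paper establishes the strong endpoint $\inf_{c\in\re}\Vert u-c\Vert_{L^{p^*}(X)}\le C\Vert u\Vert_{\dot M^{1,p}(X)}$ with $p^*=Qp/(Q-p)$ from Lemma~\ref{lemma:Sobolev-Poincare 1} by an exhaustion over balls $B(x,k)$ and a convergent subsequence of the constants $c_k$, and then uses $(L^p(X),L^{p^*}(X))_{s,q}=L^{p^*(s),q}(X)$ from \cite[Thm 4.3]{Ho}, whereas you take the weak-type endpoint $L^{p^{**},\infty}$ and outsource its global form (with a single constant $c$) to known results --- which is precisely the step where the paper's proof does its actual work.
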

\begin{proof}
By Lemma \ref{lemma:Sobolev-Poincare 1}, for every ball $B(x,r)\subset X$ and for every $u\in \dot M^{1,p}(X)$, we have
\[
\inf_{c\in\re}\Big(\,\vint{B(x,r)}|u(y)-c|^{p^*}\,d\mu(y)\Big)^{1/p^*}\\
\le Cr\Big(\,\vint{B(x,2r)}g^{p}\,d\mu\Big)^{1/p},
\]
where $p^*=pQ/(Q-p)$, whenever $g\in L^p(X)$ is a $1$-gradient of $u$.
Since the measure $\mu$ is $Q$-regular, we obtain
\[
\inf_{c\in\re}\Big(\int_{B(x,r)}|u(y)-c|^{p^*}\,d\mu(y)\Big)^{1/p^*}\\
\le C_0\Big(\int_{B(x,2r)}g^{p}\,d\mu\Big)^{1/p},
\]
where the constant $C_0>0$ is independent of $r$.
It follows that, for every $k\ge 1$, there is $c_k\in\re$ such that
\[
\|u-c_k\|_{L^{p*}(B(x,k))}\le 2C_0\|u\|_{\dot M^{1,p}(X)}.
\]
Since
\[
\|u-c_k\|_{L^{p*}(B(x,1))}
\le \|u-c_k\|_{L^{p*}(B(x,k)}
\le 2C_0\|u\|_{\dot M^{1,p}(X)},
\]
and $X$ is $Q$-regular, we have that
\begin{align*}
|c_k|
&\le c_Q^{1/p^*}\big(\|u-c_k\|_{L^{p*}(B(x,1))}+\|u\|_{L^{p*}(B(x,1))}\big)\\
&\le c_Q^{1/p^*}\big(\|u-c_k\|_{L^{p*}(B(x,1))}+\|u-c_1\|_{L^{p*}(B(x,1))}+c_1c_Q^{1/p^*}\big)\\
&\le C \|u\|_{\dot M^{1,p}(X)} +C,
\end{align*}
where the constant $C>0$ does not depend on $k$.
As a bounded sequence in $\re$, $(c_k)$ has a subsequence $(c_{k_j})$ that converges to some $c\in\re$.

Now, for a fixed $m$, and for each $k_j\ge m$, we have
\[
\begin{split}
\|u-c\|_{L^{p*}(B(x,m))}
&\le C\big( \|u-c_{k_j}\|_{L^{p*}(B(x,m))} + \|c_{k_j}-c\|_{L^{p*}(B(x,m))}\big)\\
&\le C\big( \|u-c_{k_j}\|_{L^{p*}(B(x,k_j))} + \|c_{k_j}-c\|_{L^{p*}(B(x,m))}\big)\\
&\le C\big( \|u\|_{\dot M^{1,p}(X)}+ \mu(B(x,m))^{1/p^*}|c_{k_j}-c|\big).
\end{split}
\]
By letting first $j\to\infty$ and then $m\to\infty$, we conclude that
\[
\|u-c\|_{L^{p^*}(X)}
\le C \| u\|_{\dot M^{1,p}(X)}\,.
\]
Since $\frac{1-s}{p}+\frac{s}{p^*}=\frac{1}{p^*(s)}$, an interpolation theorem from \cite[Thm 4.3]{Ho}
together with the fact that $L^{r,r}(X)=L^{r}(X)$ for each $r$, states that
\[
L^{p^*(s),q}(X)=(L^{p}(X), L^{p^*}(X))_{s,q}.
\]
Thus, using Theorem \ref{thm: interpolation}, we obtain
\[
\begin{split}
\inf_{c\in\re}\| u-c\|_{L^{p^*(s),q}(X)}
&\le C\inf_{c\in\re}\| u-c\|_{(L^{p}(X), L^{p^*}(X))_{s,q}}\\
&\le C \| u\|_{(L^{p}(X),\dot M^{1,p}(X))_{s,q}}
\approx C\| u\|_{\dot N^{s}_{p,q}(X)}.
\end{split}
\]
\end{proof}

\section{The proof of Theorem \ref{m extension}}\label{section: extensions}
In order to prove Theorem \ref{m extension}, we use a modification of the Whitney extension method,
which has been standard in the study of extension problems  starting from work \cite{J}.
We start by recalling basic properties of the Whitney covering and the corresponding partition of unity;
see, for example, \cite{HKT_Revista}. 
We also refer to \cite[Thm III.1.3]{CW} and \cite[Lemma 2.9]{MS} for the proofs of these properties.

Let $U\subsetneq X$ be an open set and, for each $x\in U$, let
\[
r(x)=\operatorname{dist}(x,X\setminus U)/10.
\]
There exists a countable family
${\cB}=\{ B_i\}_{i\in I}$  of balls $B_i=B(x_i,r_i)$, where $r_i=r(x_i)$, such that
${\cB}$ is a covering of $U$ and the balls $1/5B_i$ are disjoint.
The next lemma easily follows from the definition of the Whitney covering $\cB$ and the doubling property
of the measure $\mu$.

\begin{lemma}\label{whitney}
Let $\cB$ be a Whitney covering of an open set $U$.
There is $M\in\n$ such that for all $i\in\n$,
\begin{enumerate}
\item
\label{whitney inside U}
$5B_i\subset U$,
\item
\label{whitney distance to U}
if $x\in 5B_i$, then $5r_i<\operatorname{dist}(x,X\setminus U)< 15r_i$,
\item
\label{whitney x*i}
there is $x^*_i\in X\setminus U$ such that $d(x_i,x^*_i)<15r_i$,
\item
\label{whitney overlap}
$\sum_{i\in I}\ch{5B_i}(x)\le M$ for all $x\in U$.
\end{enumerate}
\end{lemma}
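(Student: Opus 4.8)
The statement is a packaging of the standard properties of a Whitney covering, so the proof proceeds by checking the four items one at a time, directly from the definition $r(x)=\operatorname{dist}(x,X\setminus U)/10$ and the fact that the balls $\tfrac15 B_i$ are pairwise disjoint; only the last item requires the doubling property of $\mu$.

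For \eqref{whitney inside U}: if $x\in 5B_i$ then $d(x,x_i)<5r_i=\tfrac12\operatorname{dist}(x_i,X\setminus U)$, so for every $z\in X\setminus U$ one has $d(x,z)\ge d(x_i,z)-d(x,x_i)>\tfrac12\operatorname{dist}(x_i,X\setminus U)>0$, whence $x\notin X\setminus U$. For \eqref{whitney distance to U}: the function $y\mapsto\operatorname{dist}(y,X\setminus U)$ is $1$-Lipschitz, so for $x\in 5B_i$ we get $|\operatorname{dist}(x,X\setminus U)-\operatorname{dist}(x_i,X\setminus U)|\le d(x,x_i)<5r_i$, and since $\operatorname{dist}(x_i,X\setminus U)=10r_i$ this yields $5r_i<\operatorname{dist}(x,X\setminus U)<15r_i$. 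For \eqref{whitney x*i}: since $\operatorname{dist}(x_i,X\setminus U)=10r_i<15r_i$, the definition of the infimum (or closedness of $X\setminus U$) provides a point $x_i^*\in X\setminus U$ with $d(x_i,x_i^*)<15r_i$.

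The only substantial point is the bounded overlap \eqref{whitney overlap}. Fix $x\in U$, put $R=\operatorname{dist}(x,X\setminus U)$, and let $J=\{i\in I:\,x\in 5B_i\}$. By \eqref{whitney distance to U} every $i\in J$ satisfies $R/15<r_i<R/5$, and moreover $d(x,x_i)<5r_i<R$. Consequently the disjoint balls $\tfrac15 B_i=B(x_i,r_i/5)$, $i\in J$, all lie in $B(x,2R)$, while on the other hand $B(x,2R)\subset B(x_i,3R)$ and, since $3R/(r_i/5)<225$, the doubling property gives $\mu(B(x,2R))\le\mu(B(x_i,3R))\le C(c_D)\,\mu(B(x_i,r_i/5))$ with $C(c_D)$ a fixed number of doublings. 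Summing this over $i\in J$ and using disjointness,
\[
C(c_D)^{-1}\,(\#J)\,\mu(B(x,2R))\le\sum_{i\in J}\mu\big(\tfrac15 B_i\big)\le\mu(B(x,2R)),
\]
and dividing by $\mu(B(x,2R))\in(0,\infty)$ shows $\#J\le C(c_D)=:M$. I do not expect any real obstacle here; the only things to watch are that the radii $r_i$ for $i\in J$ are mutually comparable (up to a factor $3$), so a single volume comparison suffices, and that $M$ ends up depending only on the doubling constant.
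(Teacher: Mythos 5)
Your proof is correct, and it is the standard argument: items (1)--(3) from the $1$-Lipschitz continuity of $y\mapsto\operatorname{dist}(y,X\setminus U)$ together with $\operatorname{dist}(x_i,X\setminus U)=10r_i$, and the overlap bound (4) from the comparability $R/15<r_i<R/5$ of the radii of the balls containing $x$, the disjointness of the balls $\tfrac15B_i$, and a fixed number of applications of the doubling property. The paper itself does not prove the lemma but cites \cite[Thm III.1.3]{CW} and \cite[Lemma 2.9]{MS}, and your verification follows essentially the same route as those references, with the constant $M$ depending only on the doubling constant, as required.
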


Let  $\{\ph_{i}\}_{i\in I}$ be a Lipschitz partition of unity subordinated to the covering
${\cB}$ with the following properties:

\begin{enumerate}
\item[(i)]
$\operatorname{supp}\ph_i\subset 2B_i$,
\item[(ii)]
$\ph_i(x)\ge M^{-1}$ for all $x\in B_i$,
\item[(iii)]
\label{phi lip}
there is a constant $K>0$ such that each $\ph_i$ is $Kr_i^{-1}$-Lipschitz,
\item[(iv)]
$\sum_{i\in I}\ph_i(x)=\chi_{U}(x)$.
\end{enumerate}
Note that if $5B_i\cap 5B_j\neq\emptyset$, then $1/3r_{i}\le r_j\le3 r_{j}$ and
$d(x_i^*,x_j^*)\le 80 r_{i}$, where the points $x_i^*,x_j^*$ are as in
Lemma \ref{whitney} \eqref{whitney x*i}.
\medskip

As it was already mentioned, we construct an extension operator using median values of a function.
By this technique, we can prove the result for all $0<p<\infty$, but our extension operator appears to be non-linear.
If $p>Q/(Q+s)$,
a linear extension can be obtained by replacing medians $m_u(B^*_i\cap S)$
with integral averages $u_{B^*_i\cap S}$ in the definition of the local extension \eqref{Eu}. This is easy to show
employing \eqref{Poincare 2} in the proof;
we leave the details to the reader.

\subsection{The proof of Theorem \ref{m extension}}
Let $S\subsetneq X$ be a set satisfying measure density condition \eqref{measure density}.
We may assume that $S$ is closed, because
$\mu(\overline{S}\setminus S)=0$ by \cite[Lemma 2.1]{ShMetric}.

Assume first that $u\in M^s_{p,q}(S)$ and that $(g_{k})_{k\in\z}\in \D^{s}(u)$ with
\[
\|(g_{k})\|_{L^p(S,\,l^q)}<2\inf_{{(h_{k})\in \D^{s}(u)}}\|(h_{k})\|_{L^p(S,\,l^q)}.
\]
Although the functions $g_k$ are defined on $S$ only, we identify them with functions defined on $X$ by assuming that each
$g_k=0$ on $X\setminus S$.

Let ${\cB}=\{B_i\}_{i\in I}$, $B_i=B(x_i,r_i)$, be a Whitney covering of $X\setminus S$ and let
$\{\ph_i\}_{i\in I}$ be the associated Lipschitz partition of unity.
Define ${\cB}_1 = \{ B_i\}_{i\in J}$  as the collection of all balls from ${\cB}$ with radius less than $1$, and note that the 
measure density condition holds for balls in ${\cB}_1$.
For each $i\in J$, let $x_i^*$ be "the closest point of $x_i$ in $S$" as in Lemma~\ref{whitney}, let
\[
B^*_i=B(x^*_i,r_i),
\]
and for each $x\in 2B_i$, $i\in J$, let
\[
B_x=B(x,25 r(x))=B(x,\tfrac52\operatorname{dist}(x,S)).
\]
Then $B_i^*\subset B_{x}\subset 47B_i^*$ and, by the measure density condition and the doubling property of $\mu$,
\[
\mu(B_x)\le C\mu(B_i^*\cap S).
\]
\smallskip

\noindent{\bf A local extension to the neighborhood of $S$:}

We will first construct an extension of $u$ with norm estimates to set
\[
V= \{x\in X:\, \operatorname {dist}(x,S)<8\}.
\]
For each $x\in V\setminus S$, let
\[
I_x=\{i\in I:  x\in 2B_i\}.
\]
By Lemma~\ref{whitney},  the number of elements in $I_x$ is bounded by $M$.
Moreover, if $i\in I\setminus J$, then $r_i\ge 1$ and hence
$\operatorname {dist}(2B_i,S)\ge 8r_i\geq 8$. Thus $2B_i\cap V=\emptyset$ and $i\not\in I_x$.
Accordingly, $I_x\subset J$ and therefore
\[
\sum_{i\in I_x} \ph_i(x)=
\sum_{i\in I}\ph_i(x) = \sum_{i\in J} \ph_i(x) = 1
\quad \text{for $x\in V\setminus S$}.
\]
Define the local extension $\tilde{E}u$ of $u$ by
\begin{equation}\label{Eu}
\tilde{E}u(x)=
\begin{cases}
u(x),&\text{ if }x\in S,\\
\sum_{i\in J}\ph_i(x)m_u(B^*_i\cap S),&\text{ if }x\in X\setminus S.
\end{cases}
\end{equation}
We begin by showing that
\begin{equation}\label{Eu norm}
 \|\tilde Eu\|_{L^{p}(V)}\le C\|u\|_{L^{p}(S)}.
\end{equation}
If $x\in X\setminus S$,
applying \eqref{median ie} with $0<\eta<p$, we obtain
\begin{align*}
|\tilde Eu(x)|
&\le \sum_{i\in I_{x}}\ph_i(x)|m_u(B^*_i\cap S)|
\le C\sum_{i\in I_{x}}\Big(\,\vint{B_{i}^{*}\cap S}|u|^{\eta}\,d\mu\Big)^{1/\eta}\\
&\le C \Big(\,\vint{B_{x}}|u|^{\eta}\,d\mu\Big)^{1/\eta}
\le C \big(\M u^\eta(x)\big)^{1/\eta}.
\end{align*}
Note that in the estimates above,  we assumed that $u$ equals zero outside $S$, hence,
\[
\vint{B_{x}}|u|\,d\mu=\mu(B_{x})^{-1}\int_{B_{x}\cap S}|u|\,d\mu.
\]
Now norm estimate \eqref{Eu norm} follows from the definition of $\tilde Eu$ and the boundedness of the Hardy--Littlewood 
maximal operator in $L^{p/\eta}$.
\smallskip

\noindent{\bf A fractional $s$-gradient for the local extension:}

Let $0<\delta<1-s$, $0<\eps'<s$ and $0<t<\min\{p,q\}$.
We define the sequence $(\tilde g_{k})_{k\in\z}$, a candidate for the fractional $s$-gradient of $\tilde{E}u$, as follows
\begin{equation}\label{gradientOfextension}
\tilde g_{k}(x)=\sum_{j=-\infty}^{k-1}2^{(j-k)\delta}\big(\M g_j^t(x)\big)^{1/t} \ +
\sum_{j=k-6}^{\infty}2^{(k-j)(s-\eps')}\big(\M g_j^t(x)\big)^{1/t}.
\end{equation}

We will split the rest of the proof of the theorem into several steps.

\begin{lemma}\label{lemmaGradient}
There is a constant $C>0$ such that $(C\tilde g_{k})_{k\in \z}$, where functions $\tilde g_{k}$ are given by formula 
\eqref{gradientOfextension}, is a fractional $s$-gradient of $\tilde{E}u$ .
\end{lemma}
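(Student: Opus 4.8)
The plan is to verify the defining inequality of a fractional $s$-gradient directly: given $x,y\in X$ with $2^{-k-1}\le d(x,y)<2^{-k}$, show that
\[
|\tilde Eu(x)-\tilde Eu(y)|\le Cd(x,y)^s\bigl(\tilde g_k(x)+\tilde g_k(y)\bigr).
\]
First I would split into three cases according to whether $x,y\in S$, exactly one of them lies in $S$, or both lie in $X\setminus S$. The first case is immediate since $\tilde Eu=u$ on $S$ and $(g_k)\in\D^s(u)$; moreover $g_j\le(\M g_j^t)^{1/t}$ a.e.\ on $S$, so the $j=k$ term (which appears inside the second sum of \eqref{gradientOfextension}) dominates. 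For the mixed case, say $x\in S$, $y\in X\setminus S$, I would write $\tilde Eu(y)-\tilde Eu(x)=\sum_{i\in I_y}\ph_i(y)\bigl(m_u(B_i^*\cap S)-u(x)\bigr)$, observe that for $i\in I_y$ the ball $B_i^*\cap S$ is centered near $y$ with radius comparable to $\mathrm{dist}(y,S)\lesssim d(x,y)$ (so $B_i^*\subset B(x,Cd(x,y))$), and estimate $|m_u(B_i^*\cap S)-u(x)|$ by combining \eqref{median ie} on $B_i^*\cap S$ with a Sobolev--Poincar\'e bound; here the chain would be $|m_u(B_i^*\cap S)-u(x)|\le|m_u(B_i^*\cap S)-m_u(B(x,Cd(x,y))\cap S)|+|m_u(B(x,Cd(x,y))\cap S)-u(x)|$, the first difference handled by Lemma \ref{median difference} (in the form valid on $S$, Remark \ref{poincare F}) and the second by \eqref{median ie} plus \eqref{Sobolev-Poincare 2}.

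The main case is $x,y\in X\setminus S$. I would distinguish two subcases by comparing $\mathrm{dist}(x,S)$ with $d(x,y)$. If $\mathrm{dist}(x,S)\le C d(x,y)$ (the points are close to $S$), I pick a point $z\in S$ with $d(x,z)\approx\mathrm{dist}(x,S)$ and run the mixed-case estimate on the pairs $(x,z)$ and $(y,z)$, noting $d(y,z)\lesssim d(x,y)$ as well. If instead $\mathrm{dist}(x,S)> Cd(x,y)$ (deep in $X\setminus S$), then $x$ and $y$ see essentially the same Whitney balls: using $\sum_i\ph_i\equiv 1$ near $x$,
\[
\tilde Eu(x)-\tilde Eu(y)=\sum_{i}\bigl(m_u(B_i^*\cap S)-m_u(B_{i_0}^*\cap S)\bigr)\bigl(\ph_i(x)-\ph_i(y)\bigr)
\]
for a fixed reference index $i_0$ with $x\in 2B_{i_0}$; then the $Kr_{i_0}^{-1}$-Lipschitz bound on $\ph_i$ gives a factor $d(x,y)/r_{i_0}$, the finite-overlap property restricts the sum to $i$ with $5B_i\cap 5B_{i_0}\ne\emptyset$ (for which $r_i\approx r_{i_0}$ and $B_i^*, B_{i_0}^*$ lie in a common ball $B(x^*,Cr_{i_0})$), and $|m_u(B_i^*\cap S)-m_u(B_{i_0}^*\cap S)|$ is controlled again by Lemma \ref{median difference}, applied at the scale $r_{i_0}\approx\mathrm{dist}(x,S)$. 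One then has to trade the mismatch of scales: the Poincar\'e estimates produce averages $(\vint{B(x^*,Cr_{i_0}+1)}g_j^t)^{1/t}\le C(\M g_j^t(x))^{1/t}$ at scale $\approx\mathrm{dist}(x,S)\approx 2^{-\ell}$ for some $\ell$, while the gradient $\tilde g_k$ is evaluated at scale $2^{-k}$ with $2^{-k}\approx d(x,y)\le C\,\mathrm{dist}(x,S)$, i.e.\ $\ell\le k+C$; the summation weights $2^{(j-k)\delta}$ for $j<k$ and $2^{(k-j)(s-\eps')}$ for $j\ge k-6$ in \eqref{gradientOfextension} are designed precisely so that, after shifting indices from scale $\ell$ to scale $k$ and using the geometric factor $d(x,y)^s/2^{-ks}\approx 1$ and $d(x,y)/r_{i_0}=2^{\ell-k}\cdot(\text{const})$, everything is absorbed.

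The step I expect to be the main obstacle is the bookkeeping in this last subcase: one must show that the maximal functions $(\M g_j^t)^{1/t}$ coming out of the Sobolev--Poincar\'e inequality at the Whitney scale $2^{-\ell}$, weighted by $2^{-j(s-\eps')}$ and summed over $j\ge\ell-C$, together with the extra factor $2^{\ell-k}$ from the Lipschitz bound on the partition of unity, reorganize into exactly the two sums defining $\tilde g_k$ — the first sum $\sum_{j<k}2^{(j-k)\delta}(\M g_j^t)^{1/t}$ absorbing the "low-frequency" tail created by the scale shift (this is where $\delta<1-s$ is needed, so that $2^{\ell-k}\sum_{j\le\ell}2^{-j(1-s)}\cdot 2^{ks}$ converges with a spare $2^{(j-k)\delta}$), and the second sum absorbing the "high-frequency" part (where $\eps'<s$ guarantees $\sum_{j\ge k-6}2^{(k-j)(s-\eps')}<\infty$ when later summed against an $\ell^q$ or $L^p$ norm via Lemma \ref{summing lemma}). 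Keeping the constants uniform in $k$ and independent of the particular Whitney ball, and checking that $B_i^*\subset B(x,Cd(x,y))$ with measures comparable via the measure density condition, are the routine-but-delicate points; none of it requires new ideas beyond Lemmas \ref{median difference}, \ref{lemma:Sobolev-Poincare 2}, and the Whitney covering properties.
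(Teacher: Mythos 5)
Your overall strategy is the paper's: the same case analysis (both points in $S$, mixed, both outside $S$ split according to whether $d(x,y)$ is comparable to the distance to $S$), the same tools (partition of unity, Lemma \ref{median difference} on $B\cap S$ via Remark \ref{poincare F}, maximal functions, and the $\delta<1-s$ bookkeeping in the ``deep'' case, which is exactly the paper's Case 4). Your reorganization of the shallow subcase --- reducing $x,y\in X\setminus S$ with $\operatorname{dist}(x,S)\lesssim d(x,y)$ to two mixed-case estimates through an auxiliary point $z\in S$ --- is legitimate (one must pick $z$ among the a.e.\ good points and note that the mixed-case bound at a finer scale $k'\ge k$ is dominated by the one at scale $k$, since $2^{-k'\eps'}\le 2^{-k\eps'}$ and the sum over $j\ge k'-c$ is a subsum of $j\ge k-c$); the paper instead treats this directly as its Case 3 with chains of medians, but the two routes are essentially equivalent.

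There is, however, one step that fails as you state it: in the mixed case you bound $|m_u(B(x,Cd(x,y))\cap S)-u(x)|$ ``by \eqref{median ie} plus \eqref{Sobolev-Poincare 2}''. Inequality \eqref{Sobolev-Poincare 2} controls $\inf_{c}$ of the averaged oscillation, and \eqref{median ie} compares a median to an arbitrary constant $c$; neither lets you replace that constant by the pointwise value $u(x)$. To pass from the median at scale $\approx d(x,y)$ to $u(x)$ you need an extra ingredient: either the telescoping sum over dyadic shrinking balls $B(x,2^{-i-k})\cap S$ combined with the a.e.\ convergence of medians \eqref{median limit} (Remark \ref{median F}) --- this is precisely the paper's term $(a)$, where each consecutive difference of medians is handled by \eqref{estimate for medians} and the geometric series in the scale is summed --- or, alternatively, a direct annular decomposition of $B(x,Cd(x,y))\cap S$ using the pointwise fractional-gradient inequality at $x$ (valid for a.e.\ $x\in S$), taking $c=u(x)$ in \eqref{median ie}. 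Since the same ``median-to-point-value'' passage is also implicitly needed wherever your argument ends at a value of $u$ on $S$ (e.g.\ at the auxiliary point $z$), you should make this telescoping/Lebesgue-point step explicit; with it inserted, your proof goes through and coincides in substance with the paper's.
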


\begin{proof}
Let $k\in\z$ and let $x,y\in V$ be such that $2^{-k-1}\le d(x,y)<2^{-k}$.
We consider the following four cases:

\noindent
{\sc Case 1:} Since, clearly,  almost everywhere on $S$ the inequality $g_{k}\le\tilde{g_{k}}$ holds, for almost every $x,y\in S$,
\[
|\tilde{E}u(x) - \tilde{E}u(y)|
=| u(x)-u(y)|
\le d(x,y)^{s}(\tilde g_{k}(x)+\tilde g_{k}(y)).
\]

\noindent
{\sc Case 2:} $x\in V\setminus S$, $y\in S$.

Then $r(x)=\operatorname{dist}(x,S)/10<2^{-k}/10$ and there is $x^*\in B_x\cap S$
such that $d(x,x^*)<15 r(x)$. Let $m\in\z$ be such that $2^{-m-1}\le50r(x)<2^{-m}$ and let
\[
B_{x^*}=B(x^*,2^{-m}).
\]
Then $B_x\subset B_{x^*}$ and  $2B_{x^*}\subset 9B_x$.
Now
\begin{equation}\label{xV yF}
|\tilde{E} u(x) - \tilde{E}u(y)|
\le |\tilde{E}u(x) - m_{u}(B_{x^*}\cap S)| +|u(y) -m_{u}(B_{x^*}\cap S)|,
\end{equation}
and we begin with the first term of \eqref{xV yF}.
Using \eqref{estimate for medians} and
the fact that
$B_i^{*}\subset B_{x}\subset B_{x^{*}}$ with comparable measures, we have
\begin{align*}
 |\tilde{E}u(x) - m_{u}(B_{x^*}\cap S)|
&=\Big|\sum_{i\in I_{x}}\ph_{i}(x)(m_{u}(B_{i}^{*}\cap S)-m_{u}(B_{x^*}\cap S))\big|\\
&\le C2^{-m\eps'}\sum_{j\ge m-2}2^{-j(s-\eps')}
\Big(\,\vint{B(x^{*},2^{-m+1})}g_j^{t}\,d\mu\Big)^{1/t},
\end{align*}
where
\[
\Big(\,\vint{B(x^{*},2^{-m+1})}g_j^{t}\,d\mu\Big)^{1/t}
\le C\Big(\,\vint{9B_{x}}g_j^{t}\,d\mu\Big)^{1/t}
\le C \big(\M g_{j}^{t}(x)\big)^{1/t}.
\]
Since
\[
2^{-m}\le100r(x)=10\operatorname{dist}(x,S)\le10d(x,y)<10\cdot2^{-k},
\]
we have that $m\ge k-4$, and hence
\begin{align*}
| \tilde{E}u(x) - m_{u}(B_{x^*}\cap S)|
&\le C2^{-ks}\sum_{j\ge k-6}2^{(k-j)(s-\eps')}\big(\M g_{j}^{t}(x)\big)^{1/t}\\
&\le Cd(x,y)^{s}\sum_{j\ge k-6}2^{(k-j)(s-\eps')}\big(\M g_{j}^{t}(x)\big)^{1/t}.
\end{align*}
Next we estimate the second term in \eqref{xV yF}.
Let $l$ be the smallest integer such that $B(y,2^{-k})\subset2^{l}B_{x^{*}}$.
Then $2^{l+1}B_{x^{*}}\subset B(x,17\cdot 2^{-k})$ and the radius of the ball $2^{l-1}B_{x^{*}}$ is at most $2^{-k+2}$.
Moreover, by the selection of $l$, the radius $2^{l-m}$ of the ball $2^{l}B_{x^{*}}$ is comparable to $2^{-k}$,
\begin{equation}\label{lmk}
2^{-k}\le 2^{-k}+d(x^{*},y)\le 2^{l-m}\le2^{-k+3}.
\end{equation}
We have
\begin{align*}
& |u(y) -m_{u}(B_{x^*}\cap S)|
\le |u(y)-m_{u}(B(y,2^{-k})\cap S)|\\
&\quad+|m_{u}(B(y,2^{-k})\cap S)-m_{u}(2^{l}B_{x^*}\cap S)|
+|m_{u}(2^{l}B_{x^*}\cap S)-m_{u}(B_{x^*}\cap S)|\\
&\quad=(a)+(b)+(c),
\end{align*}
and we estimate the terms $(a)-(c)$ separately.

Let $y$ be such that \eqref{median limit} holds (almost every point is such a point).
Using \eqref{estimate for medians} and estimating the geometric series in the third row by its first term $2^{-k\eps'}$,
we obtain
\begin{align*}
(a)
&\le\sum_{i=0}^{\infty}|m_{u}(B(y,2^{-i-k})\cap S)-m_{u}(B(y,2^{-(i+1)-k})\cap S)|\\
 &\le C\sum_{i=0}^{\infty}2^{(-i-k)\eps'}
\sum_{j=(i+k)-2}^{\infty} 2^{-j(s-\eps')}\Big(\,\vint{B(y,2^{-i-k+1})}g_{j}^{t}\,d\mu\Big)^{1/t}\\
 &\le C\sum_{j=k-2}^{\infty}2^{-j(s-\eps')}\big(\M g_{j}^{t}(y)\big)^{1/t}
\sum_{i=0}^{\infty}2^{(-i-k)\eps'}\\
&\le C2^{-k\eps'}\sum_{j=k-2}^{\infty}2^{-j(s-\eps')}\big(\M g_{j}^{t}(y)\big)^{1/t}\\
&\le C2^{-ks}\sum_{j=k-2}^{\infty}2^{(k-j)(s-\eps')}\big(\M g_{j}^{t}(y)\big)^{1/t}\\
&\le Cd(x,y)^{s}\sum_{j=k-2}^{\infty}2^{(k-j)(s-\eps')}\big(\M g_{j}^{t}(y)\big)^{1/t}.
\end{align*}
For term (b), we use \eqref{estimate for medians} and the fact
$B(y,2^{-k})\subset 2^{l}B_{x^{*}}\subset 5B(y,2^{-k})$
to obtain
\begin{align*}
(b)&=|m_{u}(B(y,2^{-k})\cap S)-m_{u}(2^{l}B_{x^*}\cap S)|\\
 &\le C 2^{-(m-l)\eps'}\sum_{j\ge m-l-2} 2^{-j(s-\eps')}
 \Big(\,\vint{2^{l+1}B_{x^{*}}}g_{j}^{t}\,d\mu\Big)^{1/t}.
\end{align*}
Now \eqref{lmk} together with the preceding discussion implies that
\[
2^{l+1}B_{x^{*}}\subset 17B(x,2^{-k})\subset C2^{l+1}B_{x^{*}},
\]
and hence
\begin{align*}
(b)
&\le C 2^{-k\eps'}\sum_{j\ge k-5} 2^{-j(s-\eps')}
  \Big(\,\vint{17B(x,2^{-k})}g_{j}^{t}\,d\mu\Big)^{1/t}\\
&\le C 2^{-ks}\sum_{j\ge k-5} 2^{(k-j)(s-\eps')}
 \big(\M g_{j}^{t}(x)\big)^{1/t}\\
&\le C d(x,y)^{s}\sum_{j\ge k-5} 2^{(k-j)(s-\eps')}
 \big(\M g_{j}^{t}(x)\big)^{1/t}.
\end{align*}
For the third term (c), we have, using similar estimates as above
\begin{align*}
(c)
&=|m_{u}(2^{l}B_{x^*}\cap S)-m_{u}(B_{x^*}\cap S)|\\
&\le\sum_{i=0}^{l-1}|m_{u}(2^{i}B_{x^*}\cap S)-m_{u}(2^{i+1}B_{x^*}\cap S)|\\
&\le C\sum_{i=0}^{l-1} 2^{-(m-i-1)\eps'}\sum_{j\ge m-i-3} 2^{-j(s-\eps')}
\Big(\,\vint{2^{i+2}B_{x^{*}}}g_{j}^{t}\,d\mu\Big)^{1/t}.
\end{align*}
Since $B_{x}\subset B_{x^{*}}\subset 5B_{x}$, estimating the sum by the $(l-1).$ term and using \eqref{lmk}, we have
\begin{align*}
(c)
&\le C\sum_{i=0}^{l-1} 2^{-(m-i-1)s}\sum_{j\ge m-i-3} 2^{(m-i-1-j)(s-\eps')}
 \big(\M g_{j}^{t}(x)\big)^{1/t}\\
 &\le C2^{-ks}\sum_{j\ge k-5} 2^{(k-j)(s-\eps')}
 \big(\M g_{j}^{t}(x)\big)^{1/t}\\
  &\le Cd(x,y)^{s}\sum_{j\ge k-5} 2^{(k-j)(s-\eps')}
 \big(\M g_{j}^{t}(x)\big)^{1/t}.
\end{align*}

\noindent {\sc Case 3:}
$x,y\in V\setminus S$,
$d(x,y)\ge \min\{\operatorname{dist}(x,S), \operatorname{dist}(y,S)\}$.

We begin with inequality
\begin{align*}
|\tilde{E}u(x)-\tilde{E}u(y)|
&\le|\tilde{E}u(x)-m_{u}(B_{x^*}\cap S)| +|\tilde{E}u(y) - m_{u}(B_{y^*}\cap S)|\\
&\quad+|m_{u}(B_{x^*}\cap S) - m_{u}(B_{y^*}\cap S)|\\
&=(1)+(2)+(3),
\end{align*}
where $y^{*}\in B_y\cap S$ and $B_{y^*}$ are chosen similarly as point $x^{*}$ and ball $B_{x^*}$ for $x$ in the beginning of 
case 2.
The radii of balls $B_{x^*}$ and $B_{y^*}$ are denoted by $2^{-m_{x}}$ and $2^{-m_{y}}$.

We may assume that $\operatorname{dist}(x,S)\le\operatorname{dist}(y,S)$. Then
\[
r(y)=\tfrac1{10}\operatorname{dist}(y,S)
\le\tfrac1{10}(d(x,y)+\operatorname{dist}(x,S))\le\tfrac15d(x,y),
\]
and hence $\operatorname{dist}(y,S)\le 2d(x,y)$, $d(x,x^{*})<2^{-k+1}$ and
$d(y,y^{*})<3\cdot2^{-k}$.
Hence estimates for $(1)$ and $(2)$ follow similarly as for the first term of \eqref{xV yF}.

For the last term $(3)$, let $K\ge0$ be the smallest integer such that the radius of the ball
$2^{K}B_{y^{*}}$ is at least $2^{-k}$, that is, $2^{-k}\le 2^{K-m_{y}}<2^{-k+1}$. Then
\begin{align*}
&|m_{u}(B_{x^*}\cap S) - m_{u}(B_{y^*}\cap S)|\\
&\quad\le|m_{u}(B_{y^*}\cap S) - m_{u}(2^{K}B_{y^*}\cap S)|
+|m_{u}(2^{K}B_{y^*}\cap S) - m_{u}(B_{x^*}\cap S)|\\
&\quad=(\alpha)+(\beta).
\end{align*}
We begin with $(\alpha)$. If $K=0$, then $(\alpha)=0$. If $K>0$, then the radius of $2^{K-1}B_{y^*}$ is at most $2^{-k}$.
This together with the fact that  $d(y,y^{*})<3\cdot2^{-k}$ implies that
\[
2^{K+1}B_{y^*}\subset B(y,7\cdot2^{-k})\subset5\cdot2^{K+1}B_{y^{*}}.
\]
Hence, using \eqref{estimate for medians}, we have
\begin{align*}
(\alpha)
&\le\sum_{i=0}^{K-1}|m_{u}(2^{i}B_{y^*}\cap S)-m_{u}(2^{i+1}B_{y^*}\cap S)|\\
&\le C\sum_{i=0}^{K-1} 2^{-(m_{y}-i-1)\eps'}\sum_{j\ge m_{y}-i-3} 2^{-j(s-\eps')}
 \Big(\,\vint{2^{i+2}B_{y^{*}}}g_{j}^{t}\,d\mu\Big)^{1/t}\\
  &\le C\sum_{i=0}^{K-1} 2^{-(m_{y}-i-1)\eps'}\sum_{j\ge m_{y}-i-3} 2^{-j(s-\eps')}
 \big(\M g_{j}^{t}(y)\big)^{1/t}.
\end{align*}
As in the case 2 (c), we estimate the sum by the $(K-1).$ term and use the fact that
\begin{equation}\label{Kmk}
 2^{K-1-m_{y}}<2^{-k}\le2^{K-m_{y}}
\end{equation}
and obtain
\[
(\alpha)
\le Cd(x,y)^{s}\sum_{j\ge k-1} 2^{(k-j)(s-\eps')}
 \big(\M g_{j}^{t}(y)\big)^{1/t}.
\]
For $(\beta)$, let $L\ge0$ be the smallest  integer such that $2^{K}B_{y^*}\subset 2^{L}B_{x^*}$.
Now, by the selection of $L$ and \eqref{Kmk},
\begin{equation}\label{Lmk}
 2^{-k}\le2^{L-m_{x}}<2^{-k+4},
\end{equation}
 and hence $2^{L}B_{x^*}\subset22\cdot2^{K}B_{y^*}$, $2^{L}B_{x^*}\subset B(x,2^{-k})$
and $B(x,2^{-k})\subset 3\cdot2^{L}B_{x^*}$. Now
\begin{align*}
(\beta)
&=|m_{u}(2^{K}B_{y^*}\cap S) - m_{u}(B_{x^*}\cap S)|\\
&\le|m_{u}(2^{K}B_{y^*}\cap S) - m_{u}(2^{L}B_{x^*}\cap S)|
+|m_{u}(2^{L}B_{x^*}\cap S) - m_{u}(B_{x^*}\cap S)|,
\end{align*}
where, by similar estimates as above and \eqref{Lmk},
\[
|m_{u}(2^{K}B_{y^*}\cap S) - m_{u}(2^{L}B_{x^*}\cap S)|
\le Cd(x,y)^{s}\sum_{j\ge k-6} 2^{(k-j)(s-\eps')}
 \big(\M g_{j}^{t}(x)\big)^{1/t}.
\]
Similarly as for $(\alpha)$ above, we obtain
\[
|m_{u}(2^{L}B_{x^*}\cap S) - m_{u}(B_{x^*}\cap S)|
\le Cd(x,y)^{s}\sum_{j\ge k-6} 2^{(k-j)(s-\eps')}
 \big(\M g_{j}^{t}(x)\big)^{1/t}.
\]
\noindent
{\sc Case 4:}
$x,y\in V\setminus S$,
$d(x,y)< \min\{\operatorname{dist}(x,S), \operatorname{dist}(y,S)\}$.

We may assume that $\operatorname{dist}(x,S)\le\operatorname{dist} (y,S)$.
By the properties of the functions $\ph_{i}$ and the fact that $B_{i}\subset B_{x^{*}}$ with comparable measures whenever
$i\in  I_{x}\cup I_{y}$, we can use similar estimates as for the first term of \eqref{xV yF} and obtain
\begin{equation}\label{eux-euy}
 \begin{aligned}
|\tilde{E}u(x)-\tilde{E}u(y)|
&=\Big|\sum_{i\in I_{x}\cup I_{y}}(\ph_{i}(x)-\ph_{i}(y))
  \big(m_{u}(B^{*}_{i}\cap S) - m_{u}(B_{x^*}\cap S)\big)\Big|\\
 &\le C\frac{d(x,y)}{r(x)}
 2^{-m_{x}\eps'}\sum_{j\ge m_{x}-2}2^{-j(s-\eps')}\big(\M g_{j}^{t}(x)\big)^{1/t}.
\end{aligned}
\end{equation}
Using the assumptions $0<\delta<1-s$, $r(x)<2^{-m_{x}}$,
$d(x,y)<\operatorname{dist}(x,S)=10r(x)$ and $d(x,y)<2^{-k}$, we have
\begin{align*}
 d(x,y)r(x)^{-1}2^{-m_{x}\eps'}
&\le Cd(x,y)r(x)^{s+\delta-1} \ 2^{m_{x}(s-\eps'+\delta)}\\
&\le Cd(x,y)^{s+\delta} \ 2^{m_{x}(s-\eps'+\delta)}\\
&\le Cd(x,y)^{s}2^{(m_{x}-k)\delta+m_{x}(s-\eps')}.
\end{align*}
This together with \eqref{eux-euy} implies that
\[
|\tilde{E}u(x)-\tilde{E}u(y)|
\le Cd(x,y)^{s}
\sum_{j=m_{x}-2}^\infty 2^{(m_{x}-k)\delta+(m_{x}-j)(s-\eps')}\big(\M g_j^t(x)\big)^{1/t}.
\]
By splitting the sum in two parts and using the facts $m_{x}\le j+2$ and $m_{x}\le k$, we obtain
\begin{align*}
& \sum_{j=m_{x}-2}^\infty 2^{(m_{x}-k)\delta+(m_{x}-j)(s-\eps')}\big(\M g_j^t(x)\big)^{1/t}\\
=\ & \sum_{j=m_{x}-2}^{k-1} 2^{(m_{x}-k)\delta+(m_{x}-j)(s-\eps')}\big(\M g_j^t(x)\big)^{1/t}\\
&\quad+\sum_{j=k}^\infty 2^{(m_{x}-k)\delta+(m_{x}-j)(s-\eps')}\big(\M g_j^t(x)\big)^{1/t}\\
\le \ &C\bigg(\sum_{j=-\infty}^{k-1}2^{(j-k)\delta}\big(\M g_j^t(x)\big)^{1/t} \ + \
\sum_{j=k}^{\infty}2^{(k-j)(s-\eps')}\big(\M g_j^t(x)\big)^{1/t}\bigg),
\end{align*}
which implies the claim in case 4.
Cases 1-4 show that $(C\tilde g_k)$ is a fractional $s$-gradient for the extension $\tilde Eu$.
\end{proof}

Next will estimate the norm of the fractional $s$-gradient of the local extension.
Recall from \eqref{gradientOfextension} that
\[
\tilde g_{k}(x)=\sum_{j=-\infty}^{k-1}2^{(j-k)\delta}\big(\M g_j^t(x)\big)^{1/t} \ +
\sum_{j=k-6}^{\infty}2^{(k-j)(s-\eps')}\big(\M g_j^t(x)\big)^{1/t},
\]
where $0<\delta<1-s$, $0<\eps'<s$ and $0<t<\min\{p,q\}$.

\begin{lemma}\label{IntForGradOfExtension}
$\|(\tilde g_k)\|_{L^p(V,\,l^q)}\le C\|(g_k)\|_{L^p(S,\,l^q)}$.
\end{lemma}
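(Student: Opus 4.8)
The plan is to dominate each $\tilde g_k$ pointwise by a discrete convolution of maximal functions of the $g_j$, reduce the claim to a vector-valued maximal inequality, and clear the auxiliary exponent $t$ at the end.

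First I would record the elementary pointwise bound
\[
\tilde g_k(x)\le C\sum_{j\in\z}a^{-|j-k|}\big(\M g_j^t(x)\big)^{1/t},\qquad x\in X,
\]
where $a:=2^{\min\{\delta,\,s-\eps'\}}>1$ (using $0<\delta<1-s$ and $0<\eps'<s$) and $C=C(\delta,\eps',s)$. This amounts to checking the three ranges $j\le k-7$, $k-6\le j\le k-1$, $j\ge k$ separately in \eqref{gradientOfextension}: for $j\le k-7$ and for $j\ge k$ the weights $2^{(j-k)\delta}$ and $2^{(k-j)(s-\eps')}$ are already at most $a^{-|j-k|}$, while for the boundedly many indices $k-6\le j\le k-1$ the combined weight is a constant, which can be absorbed into $Ca^{6}\cdot a^{-|j-k|}$ since $|j-k|\le 6$ there.

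Next I would take the $l^q$-norm in $k$ pointwise in $x$. When $0<q<\infty$, Lemma~\ref{summing lemma} with $b=q$ and $c_j=(\M g_j^t(x))^{1/t}$ gives $\big\|(\tilde g_k(x))_k\big\|_{l^q}\le C\big(\sum_{j}(\M g_j^t(x))^{q/t}\big)^{1/q}$, and when $q=\infty$ the same estimate follows at once from $\sum_j a^{-|j-k|}\le C$. Raising to the power $p$, integrating over $V$ and enlarging $V$ to $X$, the claim reduces to
\[
\int_X\Big(\sum_{j\in\z}\big(\M g_j^t\big)^{q/t}\Big)^{p/q}\,d\mu\le C\int_X\Big(\sum_{j\in\z}g_j^q\Big)^{p/q}\,d\mu,
\]
with the obvious modification when $q=\infty$. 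Since $0<t<\min\{p,q\}$ we have $p/t>1$ and $q/t>1$, so this is precisely the Fefferman--Stein vector-valued maximal inequality (valid for doubling measures) applied with exponents $p/t$ and $q/t$; in the case $q=\infty$ it is the scalar Hardy--Littlewood maximal theorem applied to $\|(g_j)\|_{l^\infty}^t\in L^{p/t}$. Finally, since the functions $g_j$ were extended by zero outside $S$, the right-hand side equals $C\|(g_j)\|_{L^p(S,\,l^q)}^p$, and taking $p$-th roots gives the lemma.

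The only non-routine ingredient is the vector-valued maximal inequality, and the role of the parameter $t$ is exactly to make its exponents $p/t,q/t$ exceed $1$ even when $p<1$ or $q<1$; everything else is summation of geometric series.
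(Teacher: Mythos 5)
Your proof is correct and follows essentially the same route as the paper: a pointwise geometric-decay bound on the weights, Lemma \ref{summing lemma} to sum in $k$, and the Fefferman--Stein vector-valued maximal theorem (valid for doubling measures) with exponents $p/t,q/t>1$, finishing with $\vec g\equiv 0$ off $S$. The only cosmetic differences are that you merge the two sums in \eqref{gradientOfextension} into a single kernel $a^{-|j-k|}$ rather than estimating the two parts separately, and that you spell out the $q=\infty$ case explicitly.
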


\begin{proof}
We estimate only the $L^p(V,\,l^q)$ norm of
\[
\bigg(\sum_{j=-\infty}^{k-1}2^{(j-k)\delta}\big(\M g_j^t(x)\big)^{1/t}\bigg)_{k\in \z},
\]
since another part can be estimated in a similar way.

Lemma \ref{summing lemma} implies that
\[
\sum_{k\in\z}\bigg(\sum_{j=-\infty}^{k-1}2^{(j-k)\delta}\big(\M g_j^t(x)\big)^{1/t}\bigg)^q
\le C\sum_{j\in\z}\big(\M g_j^t(x)\big)^{q/t}
\]
Hence, using a version of the Fefferman--Stein vector valued maximal function theorem for metric space with a doubling 
measure, proved in \cite[Thm 1.3]{Sa}, \cite[Thm 1.2]{GLY} (for the original version, see \cite[Thm 1]{FS}), we obtain
\begin{align*}
\|(\tilde g_k)_{k\in\z}\|_{L^p(V,\,l^q)}
&\le\|(M g_k^t)_{k\in\z}\|_{L^{p/t}(V,\,l^{q/t})}^{1/t}
\le C\|(g_k^t)_{k\in\z}\|_{L^{p/t}(X,\,l^{q/t})}^{1/t}\\
&=C\|\vec{g}\|_{L^{p}(X,l^{q})}=C\|\vec{g}\|_{L^{p}(S,\,l^{q})},
\end{align*}
where the last equality holds, since $\vec{g}\equiv 0$ outside $S$.
\end{proof}

If $u\in N^s_{p,q}(S)$ and $(g_{k})_{k\in\z}\in \D^{s}(u)$ with
\[
\|(g_{k})\|_{l^q(L^p(S))}<2\inf_{{(h_{k})\in \D^{s}(u)}}\|(h_{k})\|_{l^q(L^p(S))},
\]
then we proceed as in the Triebel--Lizorkin case. Lemma \ref{lemmaGradient} gives a fractional $s$-gradient $(\tilde g_k)_{k
\in\z}$ for the local extension, and the norm estimate corresponding Lemma \ref{IntForGradOfExtension} follows from the 
lemma below.

\begin{lemma}\label{IntForGradOfExtensionBesov}
$\|(\tilde g_k)\|_{l^q(L^p(V))}\le C\|(g_k)\|_{l^q(L^p(S))}$.
\end{lemma}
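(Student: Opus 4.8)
The plan is to follow the scheme of the proof of Lemma~\ref{IntForGradOfExtension}, but the Besov case is in fact easier: since in the $l^q(L^p)$-norm the $L^p$-norm is the \emph{inner} one, there is no need for the vector-valued Fefferman--Stein inequality, and the scalar boundedness of $\M$ together with the elementary summing Lemma~\ref{summing lemma} will do. By the (quasi-)triangle inequality in $l^q(L^p)$ it suffices to bound separately the $l^q(L^p(V))$-norms of the two sequences appearing in \eqref{gradientOfextension}; I will treat
\[
\tilde h_k(x)=\sum_{j=-\infty}^{k-1}2^{(j-k)\delta}\big(\M g_j^t(x)\big)^{1/t},
\]
the second sum being entirely analogous once one observes that the finitely many indices $k-6\le j<k$ only contribute a fixed multiplicative constant.

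First I would handle each summand separately. Since $0<t<\min\{p,q\}\le p$, we have $p/t>1$, hence $\M$ is bounded on $L^{p/t}(X)$; using also that $g_j\equiv 0$ off $S$,
\[
\big\|(\M g_j^t)^{1/t}\big\|_{L^p(V)}\le\big\|(\M g_j^t)^{1/t}\big\|_{L^p(X)}=\|\M g_j^t\|_{L^{p/t}(X)}^{1/t}\le C\|g_j^t\|_{L^{p/t}(X)}^{1/t}=C\|g_j\|_{L^p(S)}=:Ca_j.
\]
Writing $\tilde p=\min\{p,1\}$, Minkowski's inequality (if $p\ge1$) or inequality \eqref{a sum} (if $0<p<1$) gives
\[
\|\tilde h_k\|_{L^p(V)}^{\tilde p}\le\sum_{j<k}2^{(j-k)\delta\tilde p}\big\|(\M g_j^t)^{1/t}\big\|_{L^p(V)}^{\tilde p}\le C\sum_{j\in\z}2^{-|j-k|\delta\tilde p}a_j^{\tilde p},
\]
because $j-k=-|j-k|$ for $j<k$. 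The same argument applied to the second sum in \eqref{gradientOfextension} yields the analogous bound with $\delta$ replaced by $s-\eps'>0$; taking $\gamma=\min\{\delta,s-\eps'\}>0$ we obtain in all cases $\|\tilde g_k\|_{L^p(V)}^{\tilde p}\le C\sum_{j\in\z}2^{-\gamma\tilde p|j-k|}a_j^{\tilde p}$.

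Now I would raise this to the power $q/\tilde p$, sum over $k\in\z$, and invoke Lemma~\ref{summing lemma} with $a=2^{\gamma\tilde p}>1$, $b=q/\tilde p$ and $c_j=a_j^{\tilde p}$:
\[
\sum_{k\in\z}\|\tilde g_k\|_{L^p(V)}^q\le C\sum_{k\in\z}\Big(\sum_{j\in\z}2^{-\gamma\tilde p|j-k|}a_j^{\tilde p}\Big)^{q/\tilde p}\le C\sum_{j\in\z}a_j^q=C\|(g_k)\|_{l^q(L^p(S))}^q,
\]
which is the desired inequality when $q<\infty$; the case $q=\infty$ follows at once from $\|\tilde g_k\|_{L^p(V)}^{\tilde p}\le C\big(\sup_j a_j^{\tilde p}\big)\sum_{j}2^{-\gamma\tilde p|j-k|}\le C\sup_j a_j^{\tilde p}$. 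I do not anticipate any real difficulty: the only points that need care are the substitution of \eqref{a sum} for the triangle inequality in the range $0<p<1$ (which is why the exponent $\tilde p$, and hence the parameter $b=q/\tilde p$ in Lemma~\ref{summing lemma}, appears), and the bookkeeping that turns the two one-sided geometric sums in \eqref{gradientOfextension} into a single two-sided kernel $2^{-\gamma\tilde p|j-k|}$ of the type covered by Lemma~\ref{summing lemma}.
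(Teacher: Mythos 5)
Your proposal is correct and follows essentially the same route as the paper: bound each term via the scalar boundedness of $\M$ on $L^{p/t}(X)$ (using $t<\min\{p,q\}$ and $g_j\equiv 0$ off $S$), apply Minkowski's inequality for $p\ge 1$ or \eqref{a sum} for $0<p<1$, and then conclude with Lemma \ref{summing lemma}. The only differences are cosmetic: you unify the two cases with the exponent $\tilde p$, merge the two one-sided sums into a single two-sided kernel, and write out the $q=\infty$ case explicitly, which the paper leaves implicit.
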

\begin{proof}
As in the proof of Lemma \ref{IntForGradOfExtension}, we estimate the first part of $(\tilde g_k)$ only.
The second part can be estimated similarly.

Assume first that $p\ge 1$.
By the Minkowski inequality and the boundedness of the Hardy--Littlewood maximal operator in $L^r$, $r>1$, we have
\[
\begin{split}
\Big\|\sum_{j=-\infty}^{k}2^{(j-k)\delta}\left(\M g_j^t\right)^{1/t}\Big\|_{L^p(V)}
&\le \sum_{j=-\infty}^{k}2^{(j-k)\delta}\big\|\left(\M g_j^t\right)^{1/t}\big\|_{L^p(V)}\\
&\le \sum_{j=-\infty}^{k}2^{(j-k)\delta}\|g_j\|_{L^p(X)},
\end{split}
\]
and Lemma \ref{summing lemma} implies that
\[
\sum_{k\in\z}\Big(\sum_{j=-\infty}^{k}2^{(j-k)\delta}\|g_j\|_{L^p(V)}\Big)^q
\le C\sum_{j\in\z}\|g_j\|_{L^p(X)}^q.
\]

Assume then that $0<p<1$. Using inequality \eqref{a sum}
and the boundedness of the Hardy--Littlewood maximal operator in $L^r$, $r>1$, we obtain
\[
\begin{split}
\Big\|\sum_{j=-\infty}^{k}2^{(j-k)\delta}\left(\M g_j^t\right)^{1/t}\Big\|_{L^p(V)}^p
&\le \sum_{j=-\infty}^{k}2^{(j-k)\delta p}\big\|\left(\M g_j^t\right)^{1/t}\big\|_{L^p(V)}^p\\
&\le \sum_{j=-\infty}^{k}2^{(j-k)\delta p}\|g_j\|_{L^p(X)}^p.
\end{split}
\]
Hence, using Lemma \ref{summing lemma}, we have
\[
\begin{split}
\sum_{k\in\z} \Big\| \sum_{j=-\infty}^{k}2^{(j-k)\delta}\Big(\M g_j^t\Big)^{1/t}\Big\|_{L^p(V)}^{q}
&\le \sum_{k\in\z}\Big(\sum_{j=-\infty}^{k}2^{(j-k)\delta p}\|g_j\|_{L^p(X)}^p\Big)^{q/p} \\
&\le C\sum_{j\in\z}\|g_j\|_{L^p(X)}^q.
\end{split}
\]
The desired norm estimate follows in both cases because $\vec{g}\equiv 0$ outside $S$.
\end{proof}

\noindent
{\bf The final extension:}

Now we are ready to define the final extension.
Let $\Psi\colon X\to [0,1]$ be an $L$-Lipschitz cut-off function such that $\Psi|_{S}=1$ and $\Psi|_{X\setminus V}=0$.
We define an extension operator by setting
\[
Eu = \Psi \tilde{E}u.
\]
Then $Eu=u$ in $S$ and, by \eqref{Eu norm},
\[
\|Eu\|_{L^{p}(X)}
\le \|\tilde Eu\|_{L^{p}(V)}\le C\|u\|_{L^{p}(S)}.
\]
In the Triebel--Lizorkin case, \eqref{Eu norm} together with Lemmas \ref{lemmaGradient} and
\ref{IntForGradOfExtension} imply that $\tilde Eu\in M^{s}_{p,q}(V)$ and
$\|(\tilde g_k)\|_{L^p(V,\,l^q)}\le\|(g_k)\|_{L^p(S,\,l^q)}$.
Now, by Lemma \ref{LemWithLipForTL}, the sequence $(g'_{k})_{k\in\z}$,
\[
g'_k=
\begin{cases}
(\tilde g_k+2^{s k+2}|\tilde Eu|)\ch{\operatorname{supp}\Psi},
  \quad &\text{if}\,\,k<k_L,\\
(\tilde g_k+2^{k(s-1)}L|\tilde Eu|)\ch{\operatorname{supp}\Psi},
  \quad &\text{if}\,\, k\ge k_L,
\end{cases}
\]
where $k_L$ is the integer such that $2^{k_L-1}<L\le 2^{k_L}$,
is a fractional $s$-gradient of $Eu$ and it satisfies norm estimate
\[
\|\vec{g'}\|_{L^p(X,\,l^q)}
\le C\|\tilde Eu\|_{M^{s}_{p,q}(V)}
\le C\|u\|_{M^{s}_{p,q}(S)}.
\]
Hence $Eu\in M^{s}_{p,q}(X)$ and $\|Eu\|_{M^{s}_{p,q}(X)}\le C \|u\|_{M^{s}_{p,q}(S)}$.
The Besov case follows similarly by using Lemma \ref{IntForGradOfExtension} instead of Lemma \ref{IntForGradOfExtension} 
and Remark \ref{WithLipForB}.

This concludes the proof of Theorem \ref{m extension}.\qed

\section{Measure density from extension}\label{section: measure density from extension}
The main theorem of this section shows that if the space $X$ is $Q$-regular and geodesic, then the measure density of a 
domain is a necessary condition for the extension property of functions from Haj\l asz--Besov and Haj\l asz--Triebel--Lizorkin 
spaces.
The analogous result for functions from Haj\l asz--Sobolev spaces was proved in \cite[Thm 5]{HKT_Revista}, and the proof 
given below is
inspired by the corresponding proof in that paper. The main tools in the proof are Lipschitz estimates from Section \ref{section: 
lip} and embedding theorems, both the old ones and Theorem \ref{thm: Lorentz-Sobolev}. The assumption that $X$ is 
geodesic is used only to get the property $\mu(\partial B)=0$ for all balls $B$.

\begin{theorem}\label{measure density from extension}
Let $X$ be a $Q$-regular, geodesic metric measure space.
Let $0<s<1$, $0<p<\infty$, and $0<q\le\infty$.
If $\Omega\subset X$ is an $M^s_{p,q}$-extension domain (or an $N^s_{p,q}$-extension domain),
then it satisfies measure density condition \eqref{measure density}.
\end{theorem}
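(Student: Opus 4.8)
The plan is to argue by contradiction: suppose $\Omega$ is an $M^s_{p,q}$-extension (or $N^s_{p,q}$-extension) domain but fails measure density condition \eqref{measure density}. Then there is a sequence of balls $B(x_i,r_i)$ with $x_i\in\Omega$, $0<r_i\le 1$, along which $\mu(\Omega\cap B(x_i,r_i))/\mu(B(x_i,r_i))\to 0$. The idea is to test the boundedness of the extension operator against a well-chosen family of Lipschitz cut-off functions supported near these balls and derive an inequality that cannot hold in the limit. First I would fix, for each $i$, an $L_i$-Lipschitz function $\ph_i$ with $\ph_i\equiv 1$ on $B(x_i,r_i)$, $\operatorname{supp}\ph_i\subset B(x_i,2r_i)$, $0\le\ph_i\le 1$, and $L_i\approx r_i^{-1}$; by Corollary \ref{lemma:lip frac gradient} (applied on $\Omega$, with $F=\Omega\cap B(x_i,2r_i)$) we get $\ph_i\in M^s_{p,q}(\Omega)$ (resp.\ $N^s_{p,q}(\Omega)$) with
\[
\|\ph_i\|_{M^s_{p,q}(\Omega)}\le C(1+L_i^s)\mu(\Omega\cap B(x_i,2r_i))^{1/p}\le C r_i^{-s}\mu(\Omega\cap B(x_i,2r_i))^{1/p}.
\]

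Next I would apply the extension operator $E$ to obtain $E\ph_i\in M^s_{p,q}(X)$ (resp.\ $N^s_{p,q}(X)$) with $E\ph_i|_\Omega=\ph_i$ and $\|E\ph_i\|_{M^s_{p,q}(X)}\le \|E\|\,\|\ph_i\|_{M^s_{p,q}(\Omega)}$. The point now is to bound the norm of $E\ph_i$ from below using a Sobolev-type embedding on $X$. In the Triebel--Lizorkin case, $M^s_{p,q}(X)\hookrightarrow M^{s,p}(X)=M^s_{p,\infty}(X)$, and then the Sobolev--Poincar\'e inequality of Lemma \ref{lemma:Sobolev-Poincare 1} (with $Q$-regularity turning averages into genuine integrals) gives a lower bound for $\|E\ph_i\|$ in terms of the oscillation of $E\ph_i$ on a ball — and $E\ph_i$ equals $1$ on the large fraction $B(x_i,r_i)\setminus\Omega$... wait, rather on $\Omega\cap B(x_i,r_i)$, which is small. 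The correct mechanism is to note that $E\ph_i=1$ on $\Omega\cap B(x_i,r_i)$ but $E\ph_i$ is small (on average) on a comparably-sized ball because $\mu(\Omega\cap B(x_i,r_i))$ is a vanishing fraction of $\mu(B(x_i,r_i))$; so on $B(x_i, c r_i)$ the function $E\ph_i$ has large oscillation relative to its size. Running the $Q$-regular Sobolev embedding — Theorem \ref{thm: Lorentz-Sobolev} in the Besov case (using the interpolation Theorem \ref{thm: interpolation} that identifies $N^s_{p,q}$ as $(L^p,M^{1,p})_{s,q}$), and the embedding $M^s_{p,q}(X)\subset M^{s,p}(X)$ plus Lemma \ref{lemma:Sobolev-Poincare 1} in the T--L case — yields
\[
\mu(\Omega\cap B(x_i,r_i))^{1/p^*(s)}\le C\,r_i^{s}\,\|E\ph_i\|_{\dot M^s_{p,q}(X)}\quad\text{(or the }N\text{-analogue)},
\]
where $p^*(s)=Qp/(Q-sp)$, after possibly decreasing $s$ or $p$ so that $sp<Q$ (which is harmless since the extension property for some parameters is what we are assuming, and one can always pass to a scale where the embedding is available — here I would invoke that an $M^s_{p,q}$ or $N^s_{p,q}$ extension property can be leveraged, together with the already-proven Theorem \ref{m extension} direction being irrelevant; more carefully, one works directly with the given $s,p,q$ and uses the sub-Sobolev range, splitting into the cases $sp<Q$, $sp=Q$, $sp>Q$ as in \cite{HKT_Revista}).

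Combining the upper bound from Corollary \ref{lemma:lip frac gradient} with the lower bound from the embedding gives
\[
\mu(\Omega\cap B(x_i,r_i))^{1/p^*(s)}\le C\,r_i^{s}\cdot\|E\|\cdot r_i^{-s}\,\mu(\Omega\cap B(x_i,2r_i))^{1/p}
= C\,\|E\|\,\mu(\Omega\cap B(x_i,2r_i))^{1/p},
\]
so that, using $Q$-regularity to compare $r_i^Q\approx\mu(B(x_i,r_i))$ and the elementary fact that $1/p-1/p^*(s)=s/Q$, one rearranges to
\[
\left(\frac{\mu(\Omega\cap B(x_i,2r_i))}{\mu(B(x_i,2r_i))}\right)^{s/Q}\ge c(\|E\|)>0,
\]
a uniform lower bound independent of $i$ — contradicting $\mu(\Omega\cap B(x_i,r_i))/\mu(B(x_i,r_i))\to 0$ (after a routine doubling/geodesic argument to pass from radius $2r_i$ to $r_i$, which is where $\mu(\partial B)=0$, hence geodesicity, enters, to ensure the density ratios at scales $r_i$ and $2r_i$ are comparable in the limit). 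Thus \eqref{measure density} must hold. The main obstacle I anticipate is making the lower bound rigorous when the ``large oscillation on a fixed-size ball'' argument is run: one must choose the comparison ball and the constant $c$ in $B(x_i,cr_i)$ so that a definite proportion of it lies outside $\Omega$ (where $E\ph_i$ need not vanish, only $\ph_i$ does — so in fact the cleaner route is to use that $E\ph_i=1$ on the thin set $\Omega\cap B(x_i,r_i)$ and estimate $\|E\ph_i\|_{L^{p^*(s)}(B(x_i,r_i))}\ge \mu(\Omega\cap B(x_i,r_i))^{1/p^*(s)}$ directly, then subtract the constant $c$ realizing the infimum in the Poincar\'e inequality and control the remaining term by $\mu(B(x_i,r_i))$-sized quantities); handling the borderline and super-critical ranges $sp\ge Q$ requires the variants of the embedding (locally integrable representatives, Lemma \ref{lemma: Poincare}, Lemma \ref{lemma:Sobolev-Poincare 2}) exactly as in \cite{HKT_Revista}, and this case analysis is the most technical part.
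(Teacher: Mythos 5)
Your overall strategy (Lipschitz test functions bounded above via Corollary \ref{lemma:lip frac gradient}, then a Sobolev-type lower bound for the extension using $M^s_{p,q}\subset M^{s,p}$ with Lemma \ref{lemma:Sobolev-Poincare 1}, resp.\ Theorem \ref{thm: Lorentz-Sobolev} via Theorem \ref{thm: interpolation} in the Besov case) is indeed the paper's strategy, but the two steps you yourself flag as delicate are genuine gaps, and they are exactly where the paper's proof does something you have not reproduced. First, the lower bound: with your cut-off ($\ph_i=1$ on $B(x_i,r_i)$, supported in $B(x_i,2r_i)$) the Poincar\'e/embedding inequality only controls $\inf_{c}\|E\ph_i-c\|$, and nothing prevents the optimal constant $c_0$ from being close to $1$; outside $\Omega$ you have no information on $E\ph_i$, and inside $\Omega$ the set where $\ph_i=0$ at a comparable scale has no controlled measure. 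Your ``cleaner route'' of estimating $\|E\ph_i\|_{L^{p^*(s)}(B_i)}\ge\mu(\Omega\cap B_i)^{1/p^*(s)}$ and then ``subtracting $c$'' does not close this. The paper resolves it by using geodesicity ($\mu(\partial B)=0$, so $R\mapsto\mu(B(x,R)\cap\Omega)$ is continuous) to pick intermediate radii $\tilde{\tilde r}<\tilde r<r$ with $\mu(B(x,\tilde{\tilde r})\cap\Omega)=\tfrac12\mu(B(x,\tilde r)\cap\Omega)=\tfrac14\mu(B(x,r)\cap\Omega)$, so that the level-$1$ set and the level-$0$ set of the test function \emph{both} have measure comparable to $\mu(B(x,\tilde r)\cap\Omega)$ and a Chebyshev argument works no matter where $c_0$ sits. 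Second, even granting your lower bound, your combined inequality relates $\mu(\Omega\cap B(x_i,r_i))$ to $\mu(\Omega\cap B(x_i,2r_i))$, and passing between these scales is not ``a routine doubling/geodesic argument'': the restricted measure $\mu|_\Omega$ is doubling essentially if and only if $\Omega$ satisfies \eqref{measure density}, so this step is circular. In the paper the price of the balanced radii is an uncontrolled Lipschitz constant $1/(\tilde r-\tilde{\tilde r})$, yielding only $\tilde r-\tilde{\tilde r}\le C\mu(B(x,\tilde r)\cap\Omega)^{1/Q}$, which becomes measure density only after the telescoping iteration $r_0=r$, $r_{j+1}=\tilde r_j$ (measures halving, radii differences summing to give $\tilde r\le C\mu(B(x,r)\cap\Omega)^{1/Q}$) and the reduction lemma \cite[Lemma 14]{HKT_Revista}, which uses connectedness of $\Omega$. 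None of this machinery appears in your sketch, and a single application of the embedding to a single cut-off cannot replace it.

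In addition, the proposal to ``decrease $s$ or $p$ so that $sp<Q$'' is not legitimate: the extension operator is assumed only for the given $(s,p,q)$, and both the test-function norm and the embedding must be run with those parameters; the ranges $sp=Q$ and $sp>Q$ require genuinely different arguments, which the paper supplies (the Hausdorff-content capacity Lemma \ref{LemHeinKosk} for Triebel--Lizorkin at $sp=Q$, a Morrey-type estimate at $sp>Q$, and for Besov spaces at $sp=Q$ a John--Nirenberg/BMO argument combined again with the radii iteration). You name some of the right tools but do not carry out any of these cases. Finally, a smaller slip: the stated lower bound $\mu(\Omega\cap B_i)^{1/p^*(s)}\le C r_i^{s}\|E\ph_i\|_{\dot M^s_{p,q}(X)}$ has a spurious factor $r_i^{s}$; in a $Q$-regular space the powers of $r$ in Lemma \ref{lemma:Sobolev-Poincare 1} cancel exactly (since $s-Q/p+Q/p^*(s)=0$), and with your normalization the subsequent ``rearrangement'' to a uniform lower bound on the density ratio at scale $2r_i$ does not follow.
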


\begin{proof}
First we assume that $\Omega$ is an $M^s_{p,q}$-extension domain for some $0<s<1$, $0<p<\infty$, and $0<q\le\infty$.
To show that the measure density condition holds, let $x\in\overline{\Omega}$ and $0<r\le 1$, and let $B=B(x,r)$.
We may assume that $\Omega\setminus B(x,r)\ne\emptyset$, otherwise
the measure density condition is obviously satisfied.

We split the proof into three different cases depending on the size of $sp$.
\smallskip

\noindent {\sc Case 1:}  $0< s p<Q$.
By the proof of \cite[Proposition 13]{HKT_Revista}, the geodesity of $X$ implies that $\mu(\partial B(x,R))=0$
for every $R>0$. Hence there exist radii $0<\tilde{\tilde{r}}<\tilde{r}<r$ such that
\[
\mu(B(x,\tilde{\tilde{r}})\cap\Omega)
=\tfrac{1}{2}\mu(B(x,\tilde{r})\cap\Omega)
=\tfrac{1}{4}\mu(B(x,r)\cap\Omega).
\]
Let $u\colon\Omega\to[0,1]$,
\begin{equation*}
u(y)=
\begin{cases}
1, &\text{if }y\in B(x,\tilde{\tilde{r}})\cap\Omega,\\
\frac{\tilde{r}-d(x,y)}{\tilde{r}-\tilde{\tilde{r}}}, &\text{if }y\in B(x,\tilde{r})\setminus B(x,\tilde{\tilde{r}})\cap\Omega,\\
0, &\text{if }y\in \Omega\setminus
B(x,\tilde{r}).
\end{cases}
\end{equation*}
Since the function $u$ is $1/(\tilde{r}-\tilde{\tilde{r}})$-Lipschitz and $\|u\|_\infty\le 1$,
by Corollary \ref{lemma:lip frac gradient} and the fact that $0<\tilde r-\tilde{\tilde r}<1$, we have
\begin{equation}\label{Msps norm}
\| u\|_{M^s_{p,q}(\Omega)}
\le C \mu(B(x,\tilde{r})\cap\Omega)^{1/p}(\tilde{r}-\tilde{\tilde{r}})^{-s}.
\end{equation}
We want to find a good lower bound for $\| u\|_{M^s_{p,q}(\Omega)}$.
Let $v\in M^s_{p,q}(X)$ be an extension of $u$, and let
$(h_k)_{k\in\z}\in \D^s(v)$ be such that
\[
\|(h_k)_{k\in\z}\|_{L^p(X,\,l^q)}\le C\| v\|_{M^s_{p,q}(X)}.
\]
Since
\[
|v(z)-v(y)|\le C d(z,y)^{s}\big(\sup_{k\in\z}h_k(z)+\sup_{k\in\z}h_k(y)\big)
\]
for almost every $z,y\in X$, we have that $v\in M^{s,p}(X)$ with an $s$-gradient $H=\sup\limits_{k\in\z}h_k$.
Now, by Lemma \ref{lemma:Sobolev-Poincare 1},
\begin{equation}\label{Poincare}
\inf_{c\in\re}\Big(\,\vint{B(x,1)}|v-c|^{p^*(s)}\,d\mu\Big)^{1/p^*(s)}
\le C\Big(\,\vint{B(x,2)}H^{p}\,d\mu\Big)^{1/p},
\end{equation}
where $p^*(s)=Qp/(Q-ps)$ and, by the selection of $(h_k)_{k\in\z}$,
\[
\Big(\int_{B(x,2)}H^{p}\,d\mu \Big)^{1/p}
\le \Big(\int_{X}\sup_{k\in\z}\,h_k^p\,d\mu\Big)^{1/p}
\le C\Vert v\Vert_{M^{s}_{p,q}(X)}.
\]
Inequality \eqref{Poincare}, together with the last estimate, the $Q$-regularity and the fact that $v$ is an extension of $u$, 
implies the existence of $c_0$ such that
\[
\Big(\int_{B(x,1)}|v-c_{0}|^{p^*(s)}\,d\mu\Big)^{1/p^*(s)}
\le C\Vert v\Vert_{M^{s}_{p,q}(X)}
\le C\Vert u\Vert_{M^{s}_{p,q}(\Omega)}.
\]
Hence, by the Chebyshev inequality, we have
\begin{equation}\label{Cheb}
 \big(\mu\big(\{y\in B(x,1):\,|v(y)-c_0|>\lambda\}\big)\big)^{1/p^*(s)}
\le \frac{C}{\lambda}\Vert u\Vert_{M^{s}_{p,q}(\Omega)}.
\end{equation}
Since $u=v=1$ on $B(x,\tilde{\tilde{r}})\cap\Omega$ and $u=v=0$ on $(B(x,r)\setminus B(x,\tilde{r}))\cap\Omega$, we have 
that
\[
|v-c_0|\geq 1/2
\]
on at least one of the sets $B(x,\tilde{\tilde{r}})\cap\Omega$ and $(B(x,r)\setminus B(x,\tilde{r}))\cap\Omega$.
Since the two sets have measures comparable to the measure of $B(x,\tilde{r})\cap\Omega$,
\eqref{Cheb} with $\lambda=1/2$ gives
\[
\mu(B(x,\tilde{r})\cap\Omega)^{1/p^*(s)}\le C\Vert u\Vert_{M^{s}_{p,q}(\Omega)}.
\]
This together with \eqref{Msps norm} shows that
\[
\mu(B(x,\tilde{r})\cap\Omega)^{1/p^*(s)}
\le C(\tilde{r}-\tilde{\tilde{r}})^{-s}\mu(B(x,\tilde{r})\cap\Omega)^{1/p},
\]
and hence
\begin{equation}\label{difr}
\tilde{r}-\tilde{\tilde{r}}\le C \mu(B(x,\tilde{r})\cap\Omega)^{1/Q}.
\end{equation}
Now, defining radii $r_j$, $j=0,1,\dots$, as
\[
r_0=r,\quad \quad r_{j+1}=\tilde{r_j},
\]
we have
\[
\mu(B(x,r_j)\cap\Omega)=2^{-j}\mu(B(x,r)\cap\Omega),
\]
which implies that $r_j\to 0$ as $j\to\infty$.
Applying inequality \eqref{difr} for $r_{j+1}$, we obtain
\[
r_{j+1}-r_{j+2}
\le C \mu(B(x,r_{j+1})\cap\Omega)^{1/Q}\le C2^{-j/Q}\mu(B(x,r)\cap\Omega)^{1/Q},
\]
and hence
\[
\tilde{r}=r_1=\sum_{j=0}^\infty (r_{j+1}-r_{j+2})\le C\mu(B(x,r)\cap\Omega)^{1/Q}.
\]
Since it was proved in \cite[Lemma 14]{HKT_Revista}, that if measure density condition
\eqref{measure density} holds for all $x\in\overline{\Omega}$ and all $0<r\le 1$
such that $r\le 10\tilde{r}$, it holds for all $x\in\overline{\Omega}$ and all $0<r\le 1$, we are done in this case.
Note that the assumption of connectedness of $\Omega$ is essential in the cited lemma.
\medskip

\noindent {\sc Case 2:} $sp>Q$.
Let $u\colon\Omega\to[0,1]$,
\[
u(y)=
\begin{cases}
1-\frac{d(x,y)}{r},  &\text{if }y\in B(x,r),\\
0, &\text{if }y\in \Omega\setminus B(x,r).
\end{cases}
\]
Since the function $u$ is $r^{-1}$-Lipschitz and $\|u\|_{\infty}\le1$, using Corollary \ref{lemma:lip frac gradient} and the fact 
that $0<r<1$, we obtain
\begin{equation}\label{u norm big sp}
\Vert u\Vert_{M^s_{p,q}(\Omega)}
\le C\big(\mu(B(x,r)\cap\Omega)\big)^{1/p}r^{-s}.
\end{equation}
Let $v\in M^{s}_{p,q}(X)$ be an extension of $u$, and let $(h_k)_{k\in\z}\in \D^s(v)$ be such that
\begin{equation}\label{h norm}
\|(h_k)_{k\in\z}\|_{L^p(X,\,l^q)}
\le C\| v\|_{M^s_{p,q}(X)}
\le C\| u\|_{M^s_{p,q}(\Omega)} .
\end{equation}
As in the case $sp<Q$, since $v\in M^{s}_{p,q}(X)$ and $(h_k)_{k\in\z}$ is its fractional $s$-gradient, we have that
\[
|v(z)-v(y)|\le C d(z,y)^{s}\big(\sup_{k\in\z}h_k(z)+\sup_{k\in\z}h_k(y)\big)
\]
for almost every $z,y\in X$.
Thus $v\in M^{s,p}(X)$ and $H=\sup\limits_{k\in\z}h_k$ is its $s$-gradient.
By the analogue of \cite[Lemma 8]{H2} (the proof goes in the same way),
\begin{equation}\label{AlphaGradient}
|v(x)-v(y)|\le C r^{Q/p}d(x,y)^{s-Q/p}\Big(\,\vint{B(x,5r)}H^p\,d\mu\Big)^{1/p}.
\end{equation}
Since $v(x)=u(x)=1$ and $v(y)=u(y)=0$ for some $y\in (\Omega\setminus B(x,r))\cap B(x,2r)$
(we can assume that \eqref{AlphaGradient} holds for these particular points $x$ and $y$),
using \eqref{h norm}, \eqref{u norm big sp} and the $Q$-regularity, we obtain
\[
\begin{split}
1&\le C r^{Q/p}r^{s-Q/p}\Big(\,\vint{B(x,5r)}H^p\,d\mu\Big)^{1/p}
\le C r^{s-Q/p}\|(h_k)_{k\in\z}\|_{L^p(X,\,l^q)}\\
&\le  Cr^{-Q/p}\mu(B(x,r)\cap\Omega)^{1/p},
\end{split}
\]
which implies the measure density by the $Q$-regularity.
\medskip

\noindent {\sc Case 3:} $sp=Q$.
We will use the following modification of \cite[Thm 5.9]{HK}.
Below, $\cH^1_\infty$ is the Hausdorff content of dimension $1$.
\begin{lemma}\label{LemHeinKosk}
Let $X$ be a $Q$-regular space, $Q\geq 1$. Let $E$ and $F$ be
disjoint subsets of a ball $B=B(x,r)$ such that
\begin{equation}\label{lemmaHeKo}
\min\{\cH^1_\infty(E),\cH^1_\infty(F)\}\ge \lambda r,
\end{equation}
for some $0<\lambda\le 1$. Then there is a constant $C\geq 1$, depending only on $X$, such that
\[
\int_{20B}g^p\,d\mu\geq C\lambda
\]
whenever $u$ is locally integrable, $g$ is a $(Q/p)$-gradient of $u$ in $20B$,
every point in $E\cup F$ is a Lebesgue point of $u$, $u|_E\geq 1$ and $u|_F\le 0$.
\end{lemma}

Let $B=B(x,r)$ and let $A=\tfrac{2}{3}B\setminus \tfrac{1}{3}B$.
Let $u\colon\Omega\to[0,1]$,
\[
u(y)=
\begin{cases}
1,  &\text{if }y\in \frac{1}{3}B\cap\Omega,\\
2-\frac{3d(x,y)}{r},&\text{if } y\in A\cap\Omega,\\
0, &\text{if }x\in \Omega\setminus\frac{2}{3}B.
\end{cases}
\]
The function $u$ is $3/r$-Lipschitz and, as above, by Corollary \ref{lemma:lip frac gradient}, we obtain
\[
\Vert u\Vert_{M^s_{p,q}(\Omega)}\le C
(\mu(B(x,r)\cap\Omega))^{1/p}r^{-s}.
\]
Let $v\in M^{s}_{p,q}(X)$  be an extension of $u$ and let $(h_k)_{k\in\z}\in \D^s(v)$ such that
\[
\|(h_k)_{k\in\z}\|_{L^p(X,\,l^q)}\le C\Vert
v\Vert_{M^s_{p,q}(X)}\le C\Vert u\Vert_{M^s_{p,q}(\Omega)}.
\]
Using the connectivity of $\Omega$ and the fact that the $1$-Lipschitz function $y\mapsto d(x,y)$ does not increase the 
Hausdorff $1$-content, we obtain, as in \cite[p.665]{HKT_Revista}, that
\[
\min\big\{\cH^1_\infty(\tfrac{1}{3}B\cap\Omega),\cH^1_\infty(B\setminus\tfrac{2}{3}B)\cap \Omega)\big\}
\ge\frac{r}{3}.
\]
Applying Lemma \ref{LemHeinKosk} to the function $v$ with a $(Q/p)$-gradient $H=\sup_{k\in\z}h_k$, we obtain
\[
C\le\int_{20B}H^p\,d\mu\le\mu(B\cap\Omega)r^{-Q},
\]
which implies the measure density by the $Q$-regularity.

We have shown that the extension property for Triebel--Lizorkin spaces implies the measure density condition for a domain.
\medskip

In order to obtain the analogous result for Besov spaces, we have to make the following modifications in the proof given 
above.
Observe that in all three cases,
the $M^s_{p,q}(\Omega)$-norms and $N^s_{p,q}(\Omega)$-norms of the chosen test functions have the same upper bounds,
see Lemma \ref{lemma:lip frac gradient}.
\smallskip

\noindent {\sc Case 1:}  $0<sp<Q.$ Instead of \eqref{Cheb}, we use an estimate

\begin{equation*}
 \big(\mu\big(\{y\in X:\,|v(y)-c_0|>\lambda\}\big)\big)^{1/p^*(s)}
\le \frac{C}{\lambda}\| u\|_{N^{s}_{p,q}(X)},
\end{equation*}
which follows from the case $q=\infty$ of Theorem \ref{thm: Lorentz-Sobolev} and the fact that
$\| u\|_{N^{s}_{p,\infty}(X)}\le \| u\|_{N^{s}_{p,q}(X)}$ for $0<q\le\infty$.
\medskip

\noindent {\sc Case 2:} $sp>Q$. Instead of \eqref{AlphaGradient}, we use the following lemma.

\begin{lemma}
Let $X$ be a $Q$-regular space, $Q\ge 1$. Let $0<s<1$ and $sp>Q$.
There is a constant $C>0$, such that for each $u\in\dot N^{s}_{p,q}(X)$,
\[
|u(x)-u(y)|\le Cd(x,y)^{s-Q/p}\|u\|_{\dot N^{s}_{p,q}(X)}
\]
for almost every $x,y\in X$.
\end{lemma}
\begin{proof}
Using Poincar\'e inequality \eqref{eq: Poincare}, the H\"older inequality and the $Q$-regularity, we obtain
\[
\vint{B(x,r)}|u-u_{B(x,r)}|\,d\mu
\le  Cr^{s-Q/p}\|u\|_{\dot N^{s}_{p,q}(X)},
\]
for every $x\in X$ and $r>0$. The claim follows now from \cite[Thm 4]{MS2}.
\end{proof}

\noindent {\sc Case 3:}  $sp=Q$. Let the radii $\tilde{\tilde{r}}$ and $\tilde{r}$  and the function $u$ be as in Case 1.
Let $v\in N^s_{p,q}(X)$ be an extension of $u$ with $\|v\|_{N^s_{p,q}(X)}\le C\|u\|_{N^s_{p,q}(\Omega)}$.
By Poincar\'e inequality \eqref{eq: Poincare}, the H\"older inequality and the $Q$-regularity, $v\in\operatorname{BMO}(X)$ and
\[
\|v\|_{\operatorname{BMO}(X)}\le C\|v\|_{N^s_{p,q}(X)}.
\]
Hence, by the John--Nirenberg theorem \cite[Thm 2.2]{B},
\begin{equation}\label{eq: John-Nirenberg}
\inf_{b\in\re}\ \vint{B(x,r)}\exp\bigg(\frac{|v-b|}{C\|v\|_{N^s_{p,q}(X)}  }\bigg)\le C.
\end{equation}
By Corollary \ref{lemma:lip frac gradient}, we have
\begin{equation}\label{eq: norm of extension}
\|v\|_{N^s_{p,q}(X)}\le C\|u\|_{N^s_{p,q}(\Omega)}\le C(\tilde r-\tilde{\tilde r})^{-s}\mu(B(x,\tilde r)\cap\Omega).
\end{equation}

Since $u=v=1$ on $B(x,\tilde{\tilde{r}})\cap\Omega$ and $u=v=0$ on $(B(x,r)\setminus B(x,\tilde{r}))\cap\Omega$,
we have that $|v-c_0|\geq 1/2$ on at least one of the sets $B(x,\tilde{\tilde{r}})\cap\Omega$ and
$(B(x,r)\setminus B(x,\tilde{r}))\cap\Omega$.
Since the measures of these two sets are comparable to the measure of $B(x,\tilde{r})\cap\Omega$,
\eqref{eq: John-Nirenberg} and \eqref{eq: norm of extension} imply
\[
\mu(B(x,\tilde r)\cap\Omega)\exp\big(C^{-1}(\tilde r-\tilde{\tilde r})^s\mu(B(x,\tilde r)\cap\Omega)^{-1/p} \big)
\le C r^Q,
\]
which can be written in the form
\[
\tilde r-\tilde{\tilde r}
\le C\mu(B(x,\tilde r)\cap\Omega)^{1/Q}\log\bigg(\frac{C r^Q}{\mu(B(x,\tilde r)\cap\Omega)}\bigg)^{1/s}.
\]
Now, defining radii $r_j$, $j=0,1,\dots$, as
\[
r_0=r,\quad \quad r_{j+1}=\tilde{r_j},
\]
we have
\[
\mu(B(x,r_{j+1})\cap\Omega)=2^{-j}\mu(B(x,\tilde r)\cap\Omega),
\]
which implies
\[
\tilde r=r_1
=\sum_{j=0}^\infty(r_{i+1}-r_{i+2})\le C\mu(B(x,\tilde r)\cap\Omega)^{1/Q}
\sum_{j=0}^\infty 2^{-j/Q}\log\bigg(\frac{C2^{j} r_j^Q}{\mu(B(x,\tilde r)\cap\Omega)}\bigg)^{1/s}.
\]
A similar argument as in \cite[p.\,1228--1229]{HKT_JFA}, shows that
\[
\sum_{j=0}^\infty 2^{-j/Q}\log\bigg(\frac{C2^{j} r_j^Q}{\mu(B(x,\tilde r)\cap\Omega)}\bigg)^{1/s}\le C.
\]
Thus, the measure density condition holds for all $x\in\overline\Omega$ and $0<r\le1$ such that $r\le 10 \tilde r$,
and the claim follows by \cite[Lemma 14]{HKT_Revista}, which tells that it suffices to prove \eqref{measure density} in that 
case.
\end{proof}

\begin{remark} The proof of Case 3 for Besov spaces  is a modification of the proof of \cite[Thm 1.b)]{HKT_JFA};
if $q<\infty$, one could simplify the reasoning using the proof of Case 3 for Triebel--Lizorkin spaces
with Lemma \ref{LemHeinKosk} replaced by \cite[Lemma 3.3]{KKSS}.
\end{remark}

\begin{remark} 
Since Haj\l asz--Besov and Haj\l asz--Triebel--Lizorkin functions do not see the sets of measure zero, it is also natural to discuss a connection between the extension property and an "almost everywhere" variant of the measure density condition. Indeed, the proof of Theorem \ref{measure density from extension} shows that
if $S\subset X$ is connected and there exists a bounded extension operator $E\colon N^s_{p,q}(S)\to N^s_{p,q}(X)$ (or 
$E\colon M^s_{p,q}(S)\to M^s_{p,q}(X)$), then the set 
\[
\tilde S=\{x\in X: \mu(B(x,r)\cap S)>0 \text{ for every } r>0\}
\] 
satisfies the measure density condition. 
Since $\mu( S\setminus \tilde S)=0$, it follows that, for a connected set $S$, a bounded extension operator exists if and only if \eqref{measure density} holds for almost every $x\in S$ and every $0<r\le 1$. 
\end{remark}


\section{Extension theorems for Besov and Triebel--Lizorkin spaces in $\rn$}\label{Rn}
In this section we apply our general results to obtain extension results for classical Besov and
Triebel--Lizorkin spaces defined in the Euclidean space.

\subsection{Besov spaces on subsets of $\rn$}
Let $S\subset\rn$ be a measurable set and let $t>0$. 
For $h\in\rn$, define $S-h=\{s-h:s\in S\}$ and $S_h=S\cap(S-h)$.
We consider the following versions of the $L^p$-modulus of smoothness on $S$:

\begin{equation}\label{modulus of smoothness on S}
\omega(u,S,t)_p=\sup_{|h|\le t}\Big(\int_{S_h}|u(x+h)-u(x)|^p\,dx\Big)^{1/p},
\end{equation}

\begin{equation}\label{averaged modulus of smoothness on S}
\begin{split}
E_p(u,S,t)&=\bigg(\,\vint{B(0,t)}\,\int_{S_h}|u(x+h)-u(x)|^p\,dx\,dh\bigg)^{1/p}\\
&=\Big(\int_S\,\frac{1}{|B(x,t)|}\int_{B(x,t)\cap S}|u(y)-u(x)|^p\,dy\,dx\Big)^{1/p}
\end{split}
\end{equation}
and
\begin{equation}\label{loc pol approx on S}
\hat E_p(u,S,t)
=\Big(\int_S\,\frac{1}{|B(x,t)|}\inf_{c\in\re}\int_{B(x,t)\cap S}|u(y)-c|^p\,dy\,dx\Big)^{1/p}.
\end{equation}
Versions \eqref{modulus of smoothness on S} and \eqref{averaged modulus of smoothness on S} were used, for example, in 
\cite{DS} and \eqref{loc pol approx on S} in \cite{Sh86}, \cite{ShRn}. 
Note that \eqref{averaged modulus of smoothness on S} and \eqref{loc pol approx on S} are connected to the smoothness 
functions $C^{s,r}_tu(x)$ and $I^{s,r}_tu(x)$ from \cite[Def. 1.1]{GKZ}, 
The Besov spaces $B^s_{p,q}(S)$, $\cB_{p,q}^s(S)$ and $\hat \cB_{p,q}^s(S)$  consist of measurable functions for which the 
norms
\[
\|u\|_{B^s_{p,q}(S)}
=\|u\|_{L^p(S)}+\bigg(\int_0^1\big(t^{-s}\omega_p(u,S,t)\big)^{q}\frac{dt}{t}\bigg)^{1/q},
\]
\[
\|u\|_{\cB^s_{p,q}(S)}
=\|u\|_{L^p(S)}+\bigg(\int_0^1\big(t^{-s}E_p(u,S,t)\big)^{q}\frac{dt}{t}\bigg)^{1/q}
\]
and
\[
\|u\|_{\hat\cB^s_{p,q}(S)}
=\|u\|_{L^p(S)}+\bigg(\int_0^1\big(t^{-s}\hat E_p(u,S,t)\big)^{q}\frac{dt}{t}\bigg)^{1/q}
\]
are finite respectively.

The following theorem describes how these spaces are related to each other, and to the Haj\l asz--Besov space $N^s_{p,q}$.

\begin{theorem}\label{relation of besov spaces2}
Let $0<s<1$, $0<p<\infty$ and $0<q\le\infty$. Then
\[
N^s_{p,q}(S)\subset B^s_{p,q}(S)\subset  \cB^s_{p,q}(S)\subset\hat \cB^s_{p,q}(S),
\]
and there is a constant $C>0$ such that
\begin{equation}\label{besov rn}
\|\cdot\|_{\hat \cB^s_{p,q}(S)}\le \|\cdot\|_{\cB^s_{p,q}(S)}
\le  \|\cdot\|_{B^s_{p,q}(S)}\le C\|\cdot\|_{N^s_{p,q}(S)},
\end{equation}
for each measurable set $S\subset\rn$.

If $S$ satisfies measure density condition \eqref{measure density}, then
\begin{equation}\label{besov spaces coincide}
N^s_{p,q}(S)=B^s_{p,q}(S)=  \cB^s_{p,q}(S)=\hat \cB^s_{p,q}(S)
\end{equation}
with equivalent norms.
\end{theorem}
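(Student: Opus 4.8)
The plan is to establish the chain of norm inequalities \eqref{besov rn} for an arbitrary measurable $S\subset\rn$, and then to use the measure density condition to identify the outermost space $\hat\cB^s_{p,q}(S)$ with the innermost one $N^s_{p,q}(S)$, which together with \eqref{besov rn} forces all four spaces to coincide. The first two inequalities in \eqref{besov rn} are essentially formal: choosing $c=u(x)$ in the inner infimum defining $\hat E_p(u,S,t)$ gives $\hat E_p(u,S,t)\le E_p(u,S,t)$, and the first form of $E_p(u,S,t)$ in \eqref{averaged modulus of smoothness on S} is $E_p(u,S,t)^p=|B(0,t)|^{-1}\int_{B(0,t)}\int_{S_h}|u(x+h)-u(x)|^p\,dx\,dh\le\omega(u,S,t)_p^p$, since each inner integral is at most $\omega(u,S,|h|)_p^p\le\omega(u,S,t)_p^p$. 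For the only substantial inequality, $\|u\|_{B^s_{p,q}(S)}\le C\|u\|_{N^s_{p,q}(S)}$: given $(g_k)_{k\in\z}\in\D^s(u)$, the pointwise inequality applied to $x$ and $x+h$ with $2^{-k-1}\le|h|<2^{-k}$ gives $\int_{S_h}|u(x+h)-u(x)|^p\,dx\le C|h|^{sp}\|g_k\|_{L^p(S)}^p$, whence $\omega(u,S,t)_p\le C\sup\{2^{-ks}\|g_k\|_{L^p(S)}:2^{-k}\le 2t\}$; a dyadic splitting of $\int_0^1(t^{-s}\omega(u,S,t)_p)^q\,dt/t$ exploiting the decay of the weights $2^{(m-k)s}$ (the same computation as the estimate of the quantity $A$ in the proof of Theorem \ref{thm: interpolation}, with Lemma \ref{summing lemma} and \eqref{a sum}) then yields $\|u\|_{\dot B^s_{p,q}(S)}\le C\|(g_k)\|_{l^q(L^p(S))}$, and passing to the infimum over fractional $s$-gradients finishes this part, the case $q=\infty$ being analogous and simpler.

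Now suppose $S$ satisfies the measure density condition; we may assume $S$ closed since $\mu(\overline S\setminus S)=0$. As $\mu$ is doubling, the condition then holds at every radius $0<r<\infty$, so $(S,d,\mu|_S)$ is itself a metric measure space with a doubling measure and $\mu(B(x,r)\cap S)\approx\mu(B(x,r))$ for all $x\in S$, $r>0$. Consequently the intrinsic averaged modulus of smoothness of $(S,d,\mu|_S)$ (Definition \ref{B global} read in $S$) is comparable with $E_p(u,S,t)$ of \eqref{averaged modulus of smoothness on S}, and applying \eqref{equivalence of Besov norms}, i.e. \cite[Thm 1.2]{GKZ}, to $(S,d,\mu|_S)$ gives $N^s_{p,q}(S)=\cB^s_{p,q}(S)$ with equivalent norms. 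Combined with \eqref{besov rn} this already yields $N^s_{p,q}(S)=B^s_{p,q}(S)=\cB^s_{p,q}(S)$, so it remains to prove $\hat\cB^s_{p,q}(S)\subseteq\cB^s_{p,q}(S)$, that is, $\|u\|_{\cB^s_{p,q}(S)}\le C\|u\|_{\hat\cB^s_{p,q}(S)}$.

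For this last inclusion I would run a chaining argument with median values. Fix $x\in S$, $t>0$, and put $B_j=B(x,2^{-j}t)$; by measure density and doubling $\mu(B_j\cap S)\approx\mu(B_{j+1}\cap S)$, and by Remark \ref{median F}, $m_u(B_j\cap S)\to u(x)$ as $j\to\infty$ for a.e. $x\in S$. Estimating $|m_u(B_{j+1}\cap S)-m_u(B_j\cap S)|$ via \eqref{median ie} (applied on $B_j\cap S$ and on $B_{j+1}\cap S$, as allowed by Remark \ref{median F}) in terms of $D_j(x)=\big(\mu(B_j)^{-1}\inf_c\int_{B_j\cap S}|u-c|^p\,d\mu\big)^{1/p}$ and telescoping gives $|u(x)-m_u(B(x,t)\cap S)|\le C\sum_{j\ge0}D_j(x)$. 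Inserting this together with $|u(y)-u(x)|^p\le C|u(y)-m_u(B(x,t)\cap S)|^p+C|u(x)-m_u(B(x,t)\cap S)|^p$ into $E_p(u,S,t)^p$, using $\int_{B(x,t)\cap S}|u-m_u(B(x,t)\cap S)|^p\,d\mu\le C\inf_c\int_{B(x,t)\cap S}|u-c|^p\,d\mu$ and integrating in $x$ over $S$, one obtains $E_p(u,S,t)^p\le C\sum_{j\ge0}w_j\hat E_p(u,S,2^{-j}t)^p$, with $w_j=1$ when $p\le1$ and $w_j=2^{j\theta p}$ for a small $\theta\in(0,s)$ when $p>1$ (the weight coming from a weighted Hölder inequality used to pass from $\sum_jD_j$ to $(\sum_jD_j)^p$). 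Multiplying by $t^{-sp}$, substituting $\tau=2^{-j}t$ and summing the resulting geometric series in $j$ — just as in the $A$-estimate of the proof of Theorem \ref{thm: interpolation} — gives $\|u\|_{\dot\cB^s_{p,q}(S)}\le C\|u\|_{\hat\cB^s_{p,q}(S)}$, closing the loop and proving \eqref{besov spaces coincide}. I expect this chaining estimate to be the main obstacle: it must be carried out uniformly in $0<p<\infty$ (whence the weight $w_j$ for $p>1$), the measure density condition must be used carefully to trade $\mu(B(x,t)\cap S)$ for $\mu(B(x,t))$, and the scale-sum must be arranged so as to remain summable after multiplication by $t^{-sp}$ and integration in $t$; working with medians rather than near-minimizing constants also sidesteps a measurability issue. (Alternatively, $\hat\cB^s_{p,q}(S)=\cB^s_{p,q}(S)=N^s_{p,q}(S)$ can be obtained directly from the characterizations of Haj\l asz--Besov spaces in \cite{GKZ} via the functionals $C^{s,r}_t$ and $I^{s,r}_t$, applied to $(S,d,\mu|_S)$.)
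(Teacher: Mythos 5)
Your treatment of \eqref{besov rn} is essentially the paper's own: the first two inequalities are immediate from $\hat E_p\le E_p\le\omega(\cdot,S,t)_p$, and your estimate $\omega(u,S,t)_p\le C\sup\{2^{-ks}\|g_k\|_{L^p(S)}:2^{-k}\le 2t\}$ followed by a dyadic splitting and Lemma \ref{summing lemma} is the same computation the paper performs. Where you genuinely diverge is the measure density part. The paper closes the loop in one stroke, quoting the proof of \cite[Thm 2.1]{GKZ} (run on the locally doubling space $(S,d,\mu|_S)$) to get the hard inclusion $\hat\cB^s_{p,q}(S)\subset N^s_{p,q}(S)$. You instead split the work: $N^s_{p,q}(S)=\cB^s_{p,q}(S)$ by reading \eqref{equivalence of Besov norms} intrinsically on $S$, and $\hat\cB^s_{p,q}(S)\subset\cB^s_{p,q}(S)$ by an explicit median--chaining estimate $E_p(u,S,t)^p\le C\sum_{j\ge 0}w_j\hat E_p(u,S,2^{-j}t)^p$ with $w_j=1$ for $p\le1$ and $w_j=2^{j\theta p}$, $\theta<s$, for $p>1$, summed exactly as in the $A$-estimate of Theorem \ref{thm: interpolation}. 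That chaining argument is sound (the telescoping uses Remark \ref{median F} for the a.e.\ convergence of medians, and the geometric decay $2^{-j(s-\theta)p}$ survives the multiplication by $t^{-sp}$ and the $q/p$-power via \eqref{a sum} or Minkowski); in effect you reprove, self-containedly, the part of the comparison that the paper outsources to \cite{GKZ}, at the cost of extra length, while still leaning on \cite{GKZ} at the one place where a fractional $s$-gradient must be produced from $\cB$-type data.

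Two small repairs. First, the measure density condition cannot hold uniformly for all $r\in(0,\infty)$ (take $S$ bounded); doubling only upgrades it to $0<r\le R$ for each fixed $R$, with constants depending on $R$. This is harmless because the inhomogeneous norms only involve $t\in(0,1)$, but you should phrase it that way, and accordingly $(S,d,\mu|_S)$ is only locally doubling, so \cite[Thm 1.2]{GKZ} cannot be invoked verbatim as a black box: say, as the paper does, that the identification follows from its proof (or from your parenthetical route via the functionals $C^{s,r}_t$, $I^{s,r}_t$). Second, Lemma \ref{median lemma} is stated for exponents $\eta\le 1$, while your $D_j$ uses exponent $p$; for $p>1$ either note that \eqref{median ie} in fact holds for every positive exponent, or apply it with $\eta=1$ and pass to exponent $p$ by H\"older. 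Neither point affects the validity of your argument.
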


\begin{proof} 
Let $S\subset\rn$ be a measurable set.
The first two inequalities in \eqref{besov rn} are obvious, since $\hat E_p(u,S,t)\le E_p(u,S,t)\le\omega(u,S,t)_p$ for all $t>0$. 
In order to show the last inequality, let $u\in N^s_{p,q}(S)$, $(g_k)_{k\in\z}\in\D^s(u)$ and $k\in\z$. Then
\[
\begin{split}
\omega(u,S,2^{-k})_p
&=\sup_{j\ge k}\sup_{2^{-j-1}\le |h|< 2^{-j}}\Big(\int_{S_h}|u(x+h)-u(x)|^p\,dx\Big)^{1/p}\\
&\le C\sup_{j\ge k}2^{-js}\Big(\int_{S_h}g_j(x+h)^p+g_j(x)^p\,dx\Big)^{1/p} \\
&\le C\sup_{j\ge k}2^{-js}\|g_j\|_{L^p(S)}
\le C\sum_{j\ge k}2^{-js}\|g_j\|_{L^p(S)}.
\end{split}
\]
If $0<q<\infty$, then by the estimate above and by Lemma \ref{summing lemma}, we obtain
\[
\begin{split}
\int_0^1\big(t^{-s}\omega(u,S,t)_p\big)^{q}\frac{dt}{t}
&\le C\sum_{k\ge 0}\big(2^{ks}\omega(u,S,2^{-k})_p\big)^{q}
\le C\sum_{k\ge 0}\Big(\sum_{j\ge k}2^{(k-j)sp}\|g_j\|_{L^p(S)}^p\Big)^{q/p}\\
&\le C\sum_{j\in\z}\|g_j\|_{L^p(S)}^q.
\end{split}
\]
In the case $q=\infty$, we have
\[
\begin{split}
\sup_{0<t<1}t^{-s}\omega(u,S,t)_p
&\le C\sup_{k\ge 0}2^{ks}\omega(u,S,2^{-k})_p\\
&\le C\sup_{k\ge 0}\Big(\sum_{j\ge k}2^{(k-j)sp}\|g_j\|_{L^p(S)}^p\Big)^{1/p}\\
&\le C\sup_{j\in\z}\|g_j\|_{L^p(S)}.
\end{split}
\]
The claim follows by taking the infimum over all $(g_k)_{k\in\z}\in\mathbb D^s(u)$.
\medskip

Next assume that $S$ satisfies the measure density condition.
Then $(S,d,\mu)$, where $d$ and $\mu$ are the restrictions of the Euclidean metric and the Lebesgue measure to $S$, 
satisfies the doubling property locally, that is, for a given $R>0$, there exists a constant $C=C(n,c_m,R)$ such that
\[
\mu(B(x,2r))\le C\mu(B(x,r))
\]
for all $x\in S$ and $0<r\le R$. Now, the inclusion $\hat \cB^s_{p,q}(S)\subset N^s_{p,q}(S)$ and, hence, \eqref{besov spaces 
coincide} follows essentially from the proof of \cite[Thm 2.1]{GKZ}.
\end{proof}

Theorem \ref{relation of besov spaces2} implies that if $\Omega$ is an extension domain for one of the spaces 
$B^s_{p,q}$, $\cB^s_{p,q}$, $\hat \cB^s_{p,q}$, then it is an extension domain for $N^s_{p,q}$. By combining
Theorems \ref{relation of besov spaces2},  \ref{measure density from extension} and \ref{m extension}, we obtain
the first main result of this section.

\begin{theorem}\label{ext B measure density rn}
Let $0<s<1$, $0<p<\infty$ and $0<q\le\infty$.
Then $\Omega\subset \rn$ is an extension domain for  $B^s_{p,q}$  (resp. for $\cB^s_{p,q}$  or for $\hat \cB^s_{p,q}$) if and 
only if it satisfies measure density condition \eqref{measure density}.
\end{theorem}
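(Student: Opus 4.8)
The plan is to derive the theorem by assembling three ingredients that are already in place: the extension theorem for Haj\l asz--Besov spaces (Theorem \ref{m extension}), the necessity of the measure density condition for Haj\l asz--Besov extension domains (Theorem \ref{measure density from extension}), and the identification of $N^s_{p,q}$ with $B^s_{p,q}$, $\cB^s_{p,q}$ and $\hat\cB^s_{p,q}$ on sets satisfying the measure density condition (Theorem \ref{relation of besov spaces2}). The structural point that makes this work is that $\rn$, with the Euclidean metric and the Lebesgue measure, is $n$-regular and geodesic and trivially satisfies condition \eqref{measure density} with $c_m=1$; hence Theorem \ref{relation of besov spaces2} applies with $S=\rn$ to give $N^s_{p,q}(\rn)=B^s_{p,q}(\rn)=\cB^s_{p,q}(\rn)=\hat\cB^s_{p,q}(\rn)$ with equivalent norms, and Theorem \ref{measure density from extension} applies with $X=\rn$.

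\emph{Necessity.} Suppose $\Omega\subset\rn$ is an extension domain for one of $B^s_{p,q}$, $\cB^s_{p,q}$, $\hat\cB^s_{p,q}$, with a bounded extension operator $E$ into the corresponding space on $\rn$. Given $u\in N^s_{p,q}(\Omega)$, the chain of inequalities \eqref{besov rn} applied with $S=\Omega$ shows that $u$ belongs to that space on $\Omega$ with norm at most $C\|u\|_{N^s_{p,q}(\Omega)}$; applying $E$ and then the norm equivalences \eqref{besov spaces coincide} on $\rn$ shows that $Eu\in N^s_{p,q}(\rn)$ with $\|Eu\|_{N^s_{p,q}(\rn)}\le C\|u\|_{N^s_{p,q}(\Omega)}$, while $Eu|_\Omega=u$. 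Thus $\Omega$ is an $N^s_{p,q}$-extension domain, and Theorem \ref{measure density from extension} yields \eqref{measure density}. (This is where one uses that $\Omega$ is a domain, i.e.\ connected, since connectedness is invoked in Theorem \ref{measure density from extension}.)

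\emph{Sufficiency.} Conversely, assume $\Omega$ satisfies \eqref{measure density}. Theorem \ref{m extension} provides a bounded extension operator $E\colon N^s_{p,q}(\Omega)\to N^s_{p,q}(\rn)$. Because $\Omega$ has the measure density property, Theorem \ref{relation of besov spaces2} identifies each of $B^s_{p,q}(\Omega)$, $\cB^s_{p,q}(\Omega)$ and $\hat\cB^s_{p,q}(\Omega)$ with $N^s_{p,q}(\Omega)$ up to equivalence of norms, and the same identification holds on $\rn$. Hence the single operator $E$ is, simultaneously, a bounded extension operator from each of the three Besov spaces on $\Omega$ to its counterpart on $\rn$: for $u$ in any of these spaces, $\|u\|_{N^s_{p,q}(\Omega)}\approx\|u\|_{\bullet(\Omega)}$, $\|Eu\|_{N^s_{p,q}(\rn)}\le C\|u\|_{N^s_{p,q}(\Omega)}$, $\|Eu\|_{\bullet(\rn)}\approx\|Eu\|_{N^s_{p,q}(\rn)}$, and $Eu|_\Omega=u$.

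The proof carries essentially no obstacle, since all the analytic content is packaged in the cited results. The only points requiring care are that the extension operator produced by Theorem \ref{m extension} does not depend on the function space, so a single operator handles $B^s_{p,q}$, $\cB^s_{p,q}$, $\hat\cB^s_{p,q}$ and $N^s_{p,q}$ at once, and that moving $Eu$ back into the prescribed target space on $\rn$ genuinely relies on the norm equivalences of Theorem \ref{relation of besov spaces2}, which are available precisely because $\rn$ itself satisfies the measure density condition.
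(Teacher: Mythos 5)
Your proposal is correct and follows essentially the same route as the paper, which obtains the theorem precisely by combining Theorem \ref{relation of besov spaces2} (noting that an extension domain for $B^s_{p,q}$, $\cB^s_{p,q}$ or $\hat\cB^s_{p,q}$ is an $N^s_{p,q}$-extension domain, since $\rn$ itself satisfies the measure density condition) with Theorems \ref{measure density from extension} and \ref{m extension}. Your write-up merely spells out the norm-chasing that the paper leaves implicit, including the correct observations that $\rn$ is $n$-regular and geodesic and that the spaces on $\Omega$ coincide under \eqref{measure density}.
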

\begin{remark}
The definition of the space $B^s_{p,q}(S)$ depends on the translation structure of $\rn$, but
the definitions of $\cB^s_{p,q}(S)$  and $\hat\cB^s_{p,q}(S)$ can be naturally extended to the metric setting.
With minor modifications in the proofs, we obtain the following counterparts of 
Theorems \ref{relation of besov spaces2} and \ref{ext B measure density rn}. We leave the details to the reader.
\end{remark}

\begin{theorem}\label{relation of besov spaces} 
Let $X$ be a doubling metric measure space.
Let $0<s<1$, $0<p<\infty$ and $0<q\le\infty$. 
Then
\[
N^s_{p,q}(S)\subset  \cB^s_{p,q}(S)\subset\hat \cB^s_{p,q}(S)
\]
and there is a constant $C>0$ such that
\[
\|\cdot\|_{\hat \cB^s_{p,q}(S)}\le \|\cdot\|_{\cB^s_{p,q}(S)}
\le  C\|\cdot\|_{N^s_{p,q}(S)},
\]
for each measurable set $S\subset X$.

If $S$ satisfies measure density condition \eqref{measure density}, then
\[
N^s_{p,q}(S)= \cB^s_{p,q}(S)=\hat \cB^s_{p,q}(S)
\]
with equivalent norms.
\end{theorem}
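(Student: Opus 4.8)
The plan is to follow the proof of Theorem~\ref{relation of besov spaces2}, replacing every step that used the group structure of $\rn$ by an argument based on the doubling property of $\mu$; here $E_p(u,S,t)$ and $\hat E_p(u,S,t)$ denote the obvious metric analogues of \eqref{averaged modulus of smoothness on S} and \eqref{loc pol approx on S}, with $\mu(B(x,t))$ in place of $|B(x,t)|$. The inclusion $\cB^s_{p,q}(S)\subset\hat\cB^s_{p,q}(S)$ together with $\|\cdot\|_{\hat\cB^s_{p,q}(S)}\le\|\cdot\|_{\cB^s_{p,q}(S)}$ is immediate, since for every $x\in S$ and every ball $B$,
\[
\inf_{c\in\re}\int_{B\cap S}|u(y)-c|^p\,d\mu(y)\le\int_{B\cap S}|u(y)-u(x)|^p\,d\mu(y),
\]
so that $\hat E_p(u,S,t)\le E_p(u,S,t)$ for all $t>0$.

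For $N^s_{p,q}(S)\subset\cB^s_{p,q}(S)$ with $\|\cdot\|_{\cB^s_{p,q}(S)}\le C\|\cdot\|_{N^s_{p,q}(S)}$, take $u\in N^s_{p,q}(S)$ and $(g_j)_{j\in\z}\in\D^s(u)$. I would fix $k\ge0$ and, for $x\in S$, split $B(x,2^{-k})$ into the dyadic annuli $A_j(x)=\{y:2^{-j-1}\le d(x,y)<2^{-j}\}$, $j\ge k$ (the diagonal contributes nothing), using $|u(x)-u(y)|^p\le C\,2^{-jsp}(g_j(x)^p+g_j(y)^p)$ on $A_j(x)\cap S$. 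Bounding the diagonal term by $\mu(A_j(x))\le\mu(B(x,2^{-k}))$ and the off-diagonal term by Fubini and the doubling property (for fixed $y$ the admissible $x$ lie in $B(y,2^{-k})$, on which $\mu(B(x,2^{-k}))\approx\mu(B(y,2^{-k}))$) gives $E_p(u,S,2^{-k})^p\le C\sum_{j\ge k}2^{-jsp}\|g_j\|_{L^p(S)}^p$. Discretizing $\int_0^1\big(t^{-s}E_p(u,S,t)\big)^q\frac{dt}{t}$ as a sum over $k\ge0$ of $\big(2^{ks}E_p(u,S,2^{-k})\big)^q$ (using $E_p(u,S,t)\le c_D^{1/p}E_p(u,S,2t)$) and applying Lemma~\ref{summing lemma} with $a=2^{sp}>1$ yields $\|u\|_{\dot\cB^s_{p,q}(S)}\le C\|(g_j)\|_{l^q(L^p(S))}$ when $q<\infty$, and the same bound with a convergent geometric-series factor when $q=\infty$; taking the infimum over $\D^s(u)$ and adding the $L^p$-term finishes this inclusion.

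It remains, under the measure density assumption, to prove the reverse inclusion $\hat\cB^s_{p,q}(S)\subset N^s_{p,q}(S)$, which then closes the cycle and gives equality with equivalent norms. Here I would run the argument of \cite[Thm 2.1]{GKZ} intrinsically in $(S,d,\mu|_S)$: by \eqref{measure density} this space is doubling for all radii $r\le R$, $R$ fixed (cf. Remark~\ref{poincare F}), and by Remark~\ref{median F} inequality \eqref{median ie} holds with $B$ replaced by $B\cap S$. Given $u\in L^p(S)$ with $\hat E_p(u,S,\cdot)$ in the weighted space $L^q((0,1),\frac{dt}{t})$, one produces a candidate fractional $s$-gradient from a discretized local-approximation maximal function, of the form
\[
g_k(x)=2^{ks}\sum_{j\ge k}2^{(k-j)\eps'}\Big(\inf_{c\in\re}\vint{B(x,2^{-j})\cap S}|u-c|^r\,d\mu\Big)^{1/r},\qquad 0<\eps'<s,\ \ 0<r<\min\{p,q\},
\]
and verifies the pointwise inequality defining $\D^s(u)$ by telescoping chains of median differences, exactly as in Lemma~\ref{median difference} and Remark~\ref{poincare F}, while the bound $\|(g_k)\|_{l^q(L^p(S))}\le C\|u\|_{\hat\cB^s_{p,q}(S)}$ again follows from Lemma~\ref{summing lemma}. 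Only radii $\le1$ enter the norms, so local doubling of $S$ is enough.

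The main obstacle is precisely this last step: transplanting the \cite{GKZ} fractional-gradient construction to the subset $S$, which carries only local doubling, and — crucially — extracting pointwise Haj\l asz inequalities from the functional $\hat E_p$, which measures approximation by arbitrary constants rather than by the value $u(x)$; the median machinery of Lemma~\ref{median lemma} and Remark~\ref{median F} is what bridges this gap. Everything else is routine bookkeeping essentially identical to the Euclidean proof of Theorem~\ref{relation of besov spaces2}.
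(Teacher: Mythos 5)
Your first two steps are correct and are essentially the paper's own argument: the paper obtains Theorem \ref{relation of besov spaces} by exactly such ``minor modifications'' of the proof of Theorem \ref{relation of besov spaces2} (the inequality $\hat E_p\le E_p$ is trivial, and since $\omega(u,S,t)_p$ is unavailable in the metric setting, the bound $E_p(u,S,2^{-k})^p\le C\sum_{j\ge k}2^{-jsp}\|g_j\|_{L^p(S)}^p$ via dyadic annuli, Fubini and doubling, followed by Lemma \ref{summing lemma}, is the right replacement; compare the analogous annuli computation in the proof of Theorem \ref{relation of TL spaces}). For the remaining inclusion $\hat\cB^s_{p,q}(S)\subset N^s_{p,q}(S)$ under \eqref{measure density}, your plan --- run the proof of \cite[Thm 2.1]{GKZ} intrinsically in the locally doubling space $(S,d,\mu|_S)$, with medians in place of averages --- is precisely what the paper does (it simply cites that proof).

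However, the explicit candidate gradient you write down for this last step does not work as stated: with the decaying weights $2^{(k-j)\eps'}$, $j\ge k$, $\eps'>0$, the pointwise inequality cannot be verified. Telescoping $u(x)-m_u(B(x,2^{-k})\cap S)$ over the scales $2^{-j}$, $j\ge k$ (Lemma \ref{median lemma}, Remark \ref{median F}) produces the \emph{undamped} sum $\sum_{j\ge k}\bigl(\inf_{c\in\re}\vint{B(x,2^{-j})\cap S}|u-c|^r\,d\mu\bigr)^{1/r}$, and no argument can insert the factors $2^{(k-j)\eps'}$; indeed, for $u=\ch{B(x_0,2^{-N})}$ with $N\gg k$, points $x$ near $x_0$ and $y$ with $d(x,y)\approx 2^{-k}$ satisfy $|u(x)-u(y)|=1$ while $2^{-ks}\bigl(g_k(x)+g_k(y)\bigr)\to 0$ as $N\to\infty$, so your sequence is not a fractional $s$-gradient of $u$ even though $u\in\hat\cB^s_{p,q}$ for suitable parameters. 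The fix is simple and is what the GKZ-type argument actually uses: take the weights equal to $1$ (or increasing, $2^{(j-k)\eps'}$ with $0<\eps'<s$). The pointwise inequality then follows from the telescoping together with one chaining step between $B(x,2^{-k})\cap S$ and a comparable ball around $y$ (as in Lemma \ref{median difference} combined with Remark \ref{poincare F}), and the norm bound $\|(g_k)_{k\in\n}\|_{l^q(L^p(S))}\le C\|u\|_{\hat\cB^s_{p,q}(S)}$ still follows from Lemma \ref{summing lemma}, because writing $2^{ks}=2^{(k-j)s}2^{js}$ (respectively $2^{(k-j)(s-\eps')}2^{js}$) already supplies the geometric decay, with $r\le p$ used to compare the $r$-averages with $\hat E_p$ and Remark \ref{norm sum N} taking care of the nonpositive indices $k$, since only $t\in(0,1)$ enters $\|u\|_{\hat\cB^s_{p,q}(S)}$. (The restriction $r<q$ plays no role in the Besov case.) With that correction your proof coincides with the paper's.
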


\begin{theorem}\label{ext B measure density}
Let $X$ be a $Q$-regular, geodesic metric measure space.
Let $0<s<1$, $0<p<\infty$ and $0<q\le\infty$.
Then $\Omega\subset X$ is an extension domain for  $\cB^s_{p,q}$  (resp. for $\hat \cB^s_{p,q}$) 
if and only if it satisfies measure density condition \eqref{measure density}.
\end{theorem}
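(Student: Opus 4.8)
The plan is to deduce Theorem~\ref{ext B measure density} from Theorem~\ref{relation of besov spaces}, Theorem~\ref{m extension} and Theorem~\ref{measure density from extension}, in exactly the same way that Theorem~\ref{ext B measure density rn} was obtained in the Euclidean case; only the norm comparisons from Theorem~\ref{relation of besov spaces} are needed, the rest being formal manipulation of bounded operators. Both implications are routed through the Haj\l asz--Besov space $N^s_{p,q}$, using that the whole space $X$ trivially satisfies measure density condition~\eqref{measure density} (with $c_m=1$), so that by Theorem~\ref{relation of besov spaces} applied to $S=X$ one has $N^s_{p,q}(X)=\cB^s_{p,q}(X)=\hat\cB^s_{p,q}(X)$ with equivalent norms.

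For the sufficiency, suppose $\Omega$ satisfies~\eqref{measure density}. Then Theorem~\ref{relation of besov spaces} applied to $S=\Omega$ gives $N^s_{p,q}(\Omega)=\cB^s_{p,q}(\Omega)=\hat\cB^s_{p,q}(\Omega)$ with equivalent norms, and, as just noted, the same identities hold on $X$. I would then take the extension operator $E\colon N^s_{p,q}(\Omega)\to N^s_{p,q}(X)$ furnished by Theorem~\ref{m extension}: since $Eu|_\Omega=u$ and since the $\cB^s_{p,q}$-norms (respectively $\hat\cB^s_{p,q}$-norms) on $\Omega$ and on $X$ are comparable to the corresponding $N^s_{p,q}$-norms, the same map $E$ is a bounded extension operator $\cB^s_{p,q}(\Omega)\to\cB^s_{p,q}(X)$ and $\hat\cB^s_{p,q}(\Omega)\to\hat\cB^s_{p,q}(X)$. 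Note that $E$ is in general nonlinear, this being inherited from Theorem~\ref{m extension}; it can be chosen linear when $p>Q/(Q+s)$.

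For the necessity, assume $\Omega$ is a $\cB^s_{p,q}$-extension domain (the $\hat\cB^s_{p,q}$ case is identical), with bounded extension operator $E\colon\cB^s_{p,q}(\Omega)\to\cB^s_{p,q}(X)$. By Theorem~\ref{relation of besov spaces}, $N^s_{p,q}(\Omega)\subset\cB^s_{p,q}(\Omega)$ with $\|u\|_{\cB^s_{p,q}(\Omega)}\le C\|u\|_{N^s_{p,q}(\Omega)}$, while on $X$ the $\cB^s_{p,q}$- and $N^s_{p,q}$-norms are comparable. Hence for $u\in N^s_{p,q}(\Omega)$ one gets $Eu\in N^s_{p,q}(X)$ with
\[
\|Eu\|_{N^s_{p,q}(X)}\le C\|Eu\|_{\cB^s_{p,q}(X)}\le C\|u\|_{\cB^s_{p,q}(\Omega)}\le C\|u\|_{N^s_{p,q}(\Omega)},
\]
and $Eu|_\Omega=u$. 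Thus $\Omega$ is an $N^s_{p,q}$-extension domain, and since $X$ is $Q$-regular and geodesic, Theorem~\ref{measure density from extension} yields that $\Omega$ satisfies measure density condition~\eqref{measure density}.

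The main obstacle is not in this packaging argument but in Theorem~\ref{relation of besov spaces} itself, on which everything rests. The inclusions $N^s_{p,q}(S)\subset\cB^s_{p,q}(S)\subset\hat\cB^s_{p,q}(S)$ and the associated norm bounds should follow by adapting the Euclidean argument in the proof of Theorem~\ref{relation of besov spaces2} (using $\hat E_p(u,S,t)\le E_p(u,S,t)$ and a summing argument of the type of Lemma~\ref{summing lemma} for the step $N^s_{p,q}\hookrightarrow\cB^s_{p,q}$, now carried out without recourse to the translation structure of $\rn$). The reverse embedding $\hat\cB^s_{p,q}(S)\subset N^s_{p,q}(S)$ under~\eqref{measure density} is the delicate point: here one uses that $(S,d,\mu|_S)$ is locally doubling, so that the argument of \cite[Thm~2.1]{GKZ} applies to produce, from finiteness of $\|u\|_{\hat\cB^s_{p,q}(S)}$, a fractional $s$-gradient of $u$ in $S$ with controlled $l^q(L^p)$-norm. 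Once Theorem~\ref{relation of besov spaces} is available, the proof above is purely formal.
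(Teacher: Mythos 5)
Your proposal is correct and follows exactly the paper's route: the paper obtains this theorem (like Theorem~\ref{ext B measure density rn}) by combining Theorem~\ref{relation of besov spaces} with Theorems~\ref{m extension} and~\ref{measure density from extension}, routing both implications through $N^s_{p,q}$ via the norm comparisons, with the whole-space identification $N^s_{p,q}(X)=\cB^s_{p,q}(X)=\hat\cB^s_{p,q}(X)$ available from \eqref{equivalence of Besov norms} or, as you note, from the trivial measure density of $X$. You also correctly locate the only real work in the metric counterpart of Theorem~\ref{relation of besov spaces2} (local doubling of $(S,d,\mu|_S)$ plus the argument of \cite[Thm 2.1]{GKZ}), which is precisely the "minor modification" the paper leaves to the reader.
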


\subsection{Triebel--Lizorkin spaces on subsets of $\rn$}

Let $S\subset \rn$ be a measurable set.
Let $0<s<1$, $0<p,q<\infty$ and $0<r<\min\{p,q\}$.

The Triebel--Lizorkin space ${\cF}^s_{p,q}(S)$ consists of functions $u\in L^p(S)$, for which the norm
\[
\|u\|_{{\cF}^s_{p,q}(S)}
=\|u\|_{L^p(S)}+\|g\|_{L^p(S)},
\]
where
\[
\begin{split}
g(x)
=&\bigg(\int_0^1\bigg(t^{-s}\bigg(\frac1{|B(x,t)|}\int_{B(x,t)\cap S}
|u(y)-u(x)|^r\,dy\bigg)^{1/r}\bigg)^q\,\frac{dt}{t}\bigg)^{1/q},
\end{split}
\]
is finite. For $S=\rn$, this definition coincides with the classical difference definition.

The Triebel--Lizorkin space ${\hat\cF}^s_{p,q}(S)$ consists of functions $u\in L^p(S)$, for which the norm
\[
\|u\|_{{\hat\cF}^s_{p,q}(S)}
=\|u\|_{L^p(S)}+\|\hat g\|_{L^p(S)},
\]
where
\[
\hat g(x)
=\bigg(\int_0^1 \bigg(t^{-s}\bigg(\frac1{|B(x,t)|}\inf_{c\in\re}\int_{B(x,t)\cap S}|u(y)-c|^r\,dy\bigg)^{1/r}
\bigg)^q\,\frac{dt}{t}\bigg)^{1/q},
\]
is finite. This definition, with $r=1$, $p,q>1$, was used in \cite{ShRn}.
\begin{remark}
If in the definitions above we integrate over $(0,\infty)$ instead of $(0,1)$, we end up with the equivalent norms.
\end{remark}

Corresponding to Theorem \ref{relation of besov spaces2} for Besov spaces, we have the following result.

\begin{theorem}\label{relation of TL spaces}
Let $0<s<1$, $0<p,q<\infty$. Then
\begin{equation}
M^s_{p,q}(S)\subset \cF^s_{p,q}(S)\subset \hat \cF^s_{p,q}(S)
\end{equation}
and there is a constant $C>0$ such that
\begin{equation}\label{relation of TL norms}
\|\cdot\|_{\hat \cF^s_{p,q}(S)}\le \|\cdot\|_{\cF^s_{p,q}(S)}\le C\|\cdot\|_{M^s_{p,q}(S)},
\end{equation}
for each measurable set $S\subset \rn$. 

If $S$ satisfies measure density condition \eqref{measure density}, then
\begin{equation} \label{TL spaces coincide}
M^s_{p,q}(S)= \cF^s_{p,q}(S)=\hat \cF^s_{p,q}(S)
\end{equation}
with equivalent norms.
\end{theorem}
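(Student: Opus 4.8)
The plan is to prove the inclusions $M^s_{p,q}(S)\subset\cF^s_{p,q}(S)\subset\hat\cF^s_{p,q}(S)$ together with \eqref{relation of TL norms}, and then, assuming \eqref{measure density}, the reverse inclusion $\hat\cF^s_{p,q}(S)\subset M^s_{p,q}(S)$; combined with the chain of inclusions, the latter gives \eqref{TL spaces coincide}. The inclusion $\cF^s_{p,q}(S)\subset\hat\cF^s_{p,q}(S)$ and the estimate $\|\cdot\|_{\hat\cF^s_{p,q}(S)}\le\|\cdot\|_{\cF^s_{p,q}(S)}$ are immediate from the pointwise bound $\hat g(x)\le g(x)$, which follows by taking $c=u(x)$ in the infimum defining $\hat g$.

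For the inequality $\|\cdot\|_{\cF^s_{p,q}(S)}\le C\|\cdot\|_{M^s_{p,q}(S)}$, which requires no measure density assumption, I would take $u\in M^s_{p,q}(S)$ and $(g_j)_{j\in\z}\in\D^s(u)$, extended by zero outside $S$, fix a.e.\ $x\in S$ and $t\in(2^{-k-1},2^{-k}]$ with $k\ge0$, and split $B(x,t)\cap S$ into the dyadic annuli $\{y\in S:2^{-j-1}\le|x-y|<2^{-j}\}$, $j\ge k$. Applying the defining inequality of a fractional $s$-gradient on each annulus and using $\vint{B(x,2^{-j})}g_j^r\,dy\le\M(g_j^r)(x)$ one obtains
\[
\Big(\frac1{|B(x,t)|}\int_{B(x,t)\cap S}|u(y)-u(x)|^r\,dy\Big)^{1/r}\le C\Big(\sum_{j\ge k}2^{-jsr}\M(g_j^r)(x)\Big)^{1/r}.
\]
Multiplying by $t^{-s}\approx2^{ks}$, raising to the power $q$, summing over $k\ge0$ and applying Lemma \ref{summing lemma} (with $a=2^{sr}$, $b=q/r$) gives $g(x)\le C\big(\sum_{j\in\z}\M(g_j^r)(x)^{q/r}\big)^{1/q}$; integrating over $S$ and invoking the Fefferman--Stein vector-valued maximal inequality \cite{FS}, legitimate because $r<\min\{p,q\}$ makes $p/r,q/r>1$, yields $\|g\|_{L^p(S)}\le C\|(g_j)\|_{L^p(S,\,l^q)}$, and taking the infimum over $(g_j)\in\D^s(u)$ finishes this step.

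For the reverse inclusion I would assume \eqref{measure density}, so that $(S,d,\mu|_S)$ is locally doubling and $|B(x,r)\cap S|\approx|B(x,r)|$ for $x\in S$, $0<r\le1$, and follow the scheme of the proof of \cite[Thm 2.1]{GKZ}. Writing $E_k(u,x)=\big(|B(x,2^{-k})|^{-1}\inf_c\int_{B(x,2^{-k})\cap S}|u-c|^r\,dy\big)^{1/r}$, I would first record, using \eqref{median ie} for the sets $B\cap S$ (see Remark \ref{median F}), local doubling, and the a.e.\ identity $u(x)=\lim_i m_u(B(x,2^{-i})\cap S)$, that
\[
|m_u(B(x,2^{-i})\cap S)-m_u(B(x,2^{-i-1})\cap S)|\le CE_i(u,x),\qquad|u(x)-m_u(B(x,2^{-k})\cap S)|\le C\sum_{i\ge k}E_i(u,x),
\]
and that, for $x,y\in S$ with $2^{-k-1}\le|x-y|<2^{-k}$, $|u(x)-u(y)|\le C\sum_{i\ge k-1}\big(E_i(u,x)+E_i(u,y)\big)$, since both balls $B(x,2^{-k})\cap S$ and $B(y,2^{-k})\cap S$ lie in $B(x,2^{-k+1})\cap S$ with comparable measure. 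Hence setting $g_k(x)=C2^{ks}\sum_{i\ge k-1}E_i(u,x)=C\sum_{i\ge k-1}2^{(k-i)s}\big(2^{is}E_i(u,x)\big)$ for $k\ge1$ makes $(g_k)_{k\ge1}$ the positive part of a fractional $s$-gradient of $u$, and Lemma \ref{summing lemma} (with $a=2^s$, $b=q$), together with the local-doubling bound $\sum_{k\ge1}(2^{ks}E_k(u,x))^q\le C\hat g(x)^q$ and the trivial estimate $E_0(u,x)\le C\M(|u|^r)(x)^{1/r}$, gives $\|(g_k)_{k\ge1}\|_{l^q}\le C\hat g(x)+C\M(|u|^r)(x)^{1/r}$ for a.e.\ $x$. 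Therefore $\|(g_k)_{k\ge1}\|_{L^p(S,\,l^q)}\le C\|u\|_{\hat\cF^s_{p,q}(S)}$, and Remark \ref{norm sum N} yields $\|u\|_{M^s_{p,q}(S)}\le C\|u\|_{\hat\cF^s_{p,q}(S)}$.

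The main obstacle is this last inclusion: one must carry out, for a subset $S$ of $\rn$ rather than for $\rn$ itself, the construction of a fractional $s$-gradient out of the zero-order local approximations appearing in $\hat E_p(u,S,t)$, keeping all covering and doubling estimates confined to small scales, using \eqref{measure density} to pass between $B(x,t)$ and $B(x,t)\cap S$, and verifying that the resulting pointwise inequality holds for every pair $x,y\in S$ at every scale. This is precisely where the proof of \cite[Thm 2.1]{GKZ} does the real work, and only routine modifications are needed to adapt it to $S$.
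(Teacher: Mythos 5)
Your proposal is correct and follows essentially the same route as the paper: the annulus decomposition, Lemma \ref{summing lemma} and the Fefferman--Stein vector-valued maximal inequality for $\|\cdot\|_{\cF^s_{p,q}(S)}\le C\|\cdot\|_{M^s_{p,q}(S)}$, and the pointwise bound $\hat g\le g$ for the first inequality. For the reverse inclusion under \eqref{measure density} the paper simply invokes the proof of \cite[Thm 3.1]{GKZ}, whereas you sketch that same median/telescoping construction adapted to $S$ explicitly; this is the same argument, just spelled out.
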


\begin{proof}
Let $S\subset \rn$ be a measurable set. The first inequality in \eqref{relation of TL norms} is obvious because
$\hat g(x)\le g(x)$. To prove the second inequality,
let $u\in M^s_{p,q}(S)$, $(g_k)_{k\in\z}\in\D^s(u)$ and $k\in\z$. Then, for almost every $x\in S$,
\[
\begin{split}
\frac{1}{|B(x,2^{-k})|}&\int_{B(x,2^{-k})\cap S}|u(x)-u(y)|^r\,dy\\
&=\sum_{j\ge k}\frac{1}{|B(x,2^{-k})|}\int_{(B(x,2^{-j})\setminus B(x,2^{-j-1}))\cap S}|u(x)-u(y)|^r\,dy\\
&\le C\sum_{j\ge k}2^{-jsr}\Big( g_j(x)^r+\vint{B(x,2^{-j})} (g_j(y)^r\ch S(y))\,dy\Big)\\
&\le C\sum_{j\ge k}2^{-jsr}\M(g_j^r\ch S)(x),
\end{split}
\]
and, hence,
\[
\begin{split}
g(x)^q
&=\int_0^1 \bigg(t^{-s}\bigg(\frac{1}{|B(x,t)|}\int_{B(x,t)\cap S}|u(x)-u(y)|^r\,dy\bigg)^{1/r}\bigg)^q\,\frac{dt}{t}\\
&\le C\sum_{k\ge 0} \bigg( 2^{ks}\bigg(\frac{1}{|B(x,2^{-k})|}\int_{B(x,2^{-k})\cap S}
   |u(x)-u(y)|^r\,dy\bigg)^{1/r}\bigg)^{q}\\
&\le C\sum_{k\ge 0}\Big(\sum_{j\ge k}2^{(k-j)sr}\M(g_j^r\ch S)(x)\Big)^{q/r}\\
&\le C\sum_{j\in\z}\big( \M(g_j^r\ch S)(x) \big)^{q/r},
\end{split}
\]
where the last inequality follows from Lemma \ref{summing lemma}.
By the Fefferman--Stein vector valued maximal function theorem,
we obtain
\begin{align*}
\|g\|_{L^p(S)}
&\le C\|(\M (g_k^r\ch S))_{k\in\z}\|_{L^{p/r}(\rn,\,l^{q/r})}^{1/r}
\le C\|(g_k^r\ch S)_{k\in\z}\|_{L^{p/r}(\rn,\,l^{q/r})}^{1/r}\\
&=C\|(g_k)_{k\in\z}\|_{L^{p}(S,l^{q})}.
\end{align*}
The claim follows by taking the infimum over all $(g_k)_{k\in\z}\in\D^s(u)$.

If $S$ satisfies measure density condition \eqref{measure density}, then \eqref{TL spaces coincide} follows from the proof of 
\cite[Thm 3.1]{GKZ}.
\end{proof}

By combining Theorems \ref{relation of TL spaces},  \ref{measure density from extension} and \ref{m extension},
we obtain the second main result of this section.

\begin{theorem}\label{ext TL measure density}
Let $0<s<1$, $0<p,q<\infty$.
Then $\Omega\subset \rn$ is an extension domain for $\cF^s_{p,q}$ (resp. for  $\hat \cF^s_{p,q}$) if and only if it satisfies 
measure density condition \eqref{measure density}.
\end{theorem}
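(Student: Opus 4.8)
The plan is to deduce Theorem~\ref{ext TL measure density} directly from the three results it cites, in exactly the same way the Besov analogue (Theorem~\ref{ext B measure density rn}) follows from Theorems~\ref{relation of besov spaces2}, \ref{measure density from extension} and \ref{m extension}. So the proof is a short assembly argument with no new computation.

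\textbf{Sufficiency.} First I would assume that $\Omega\subset\rn$ satisfies measure density condition \eqref{measure density}. By Theorem~\ref{m extension}, applied in the metric measure space $X=\rn$ with Lebesgue measure (which is doubling), there is a bounded extension operator $E\colon M^s_{p,q}(\Omega)\to M^s_{p,q}(\rn)$ for the given parameters $0<s<1$, $0<p,q<\infty$. By Theorem~\ref{relation of TL spaces}, the measure density condition also gives $M^s_{p,q}(\Omega)=\cF^s_{p,q}(\Omega)$ and $M^s_{p,q}(\rn)=\cF^s_{p,q}(\rn)=\hat\cF^s_{p,q}(\rn)$, all with equivalent norms; note that on all of $\rn$ the measure density condition holds trivially with $c_m=1$, so Theorem~\ref{relation of TL spaces} does apply on the target side. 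Composing, $E$ becomes a bounded extension operator $\cF^s_{p,q}(\Omega)\to\cF^s_{p,q}(\rn)$, and likewise for $\hat\cF^s_{p,q}$. Hence $\Omega$ is an extension domain for $\cF^s_{p,q}$ (resp.\ $\hat\cF^s_{p,q}$).

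\textbf{Necessity.} Conversely, suppose $\Omega$ is an extension domain for $\cF^s_{p,q}$ (resp.\ for $\hat\cF^s_{p,q}$). By the inclusions and norm inequalities \eqref{relation of TL norms} of Theorem~\ref{relation of TL spaces}, which hold for \emph{every} measurable set without any regularity assumption, restriction maps $M^s_{p,q}(\Omega)$ boundedly into $\cF^s_{p,q}(\Omega)$ (and into $\hat\cF^s_{p,q}(\Omega)$), while on $\rn$ these spaces coincide with $M^s_{p,q}(\rn)$. Therefore any bounded extension operator for $\cF^s_{p,q}$ (or $\hat\cF^s_{p,q}$) composed with these identifications yields a bounded extension operator $M^s_{p,q}(\Omega)\to M^s_{p,q}(\rn)$, so $\Omega$ is an $M^s_{p,q}$-extension domain. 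Since $\rn$ is $Q$-regular with $Q=n$ and geodesic, Theorem~\ref{measure density from extension} then forces $\Omega$ to satisfy measure density condition \eqref{measure density}. This completes the equivalence.

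\textbf{Main obstacle.} There is essentially no analytic obstacle here — all the work has been done in the cited theorems — so the only thing to be careful about is the logical bookkeeping: in particular making sure that when one passes to the target space $\rn$ one may invoke Theorem~\ref{relation of TL spaces} (which one can, since $\rn$ itself satisfies \eqref{measure density}), and that the restriction $p,q<\infty$ is respected throughout (unlike the Besov statement, the $q=\infty$ case is excluded precisely because $\cF^s_{p,q}$ is only defined for $q<\infty$). One should also note, as in the Besov case, that the connectedness of $\Omega$ (built into the word ``domain'') is used in Theorem~\ref{measure density from extension} via \cite[Lemma 14]{HKT_Revista}.
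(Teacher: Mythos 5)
Your assembly argument is correct and is exactly how the paper obtains Theorem \ref{ext TL measure density}: the paper simply states that it follows by combining Theorems \ref{relation of TL spaces}, \ref{measure density from extension} and \ref{m extension}, which is precisely your two-directional composition (using that $\rn$ itself satisfies \eqref{measure density} so that $M^s_{p,q}(\rn)=\cF^s_{p,q}(\rn)=\hat\cF^s_{p,q}(\rn)$, and that $\rn$ is $n$-regular and geodesic for the necessity direction).
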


\begin{remark}
As in the case of Besov spaces, the definitions of Triebel--Lizorkin spaces and the results above have counterparts in metric 
setting.
\end{remark}

\begin{remark}\label{rm:IntrinsicDefinitionsTriebel-Lizorkin}
For domains $\Omega\subset\rn$, Triebel--Lizorkin spaces ${C}^s_{p,q}(\Omega)$ consisting of functions $u\in L^p(\Omega)
$, for which the norm
\[
\|u\|_{{C}^s_{p,q}(\Omega)}
=\|u\|_{L^p(\Omega)}+\|h\|_{L^p(\Omega)},
\]
where
\begin{equation}
h(x)
=\bigg(\int_0^{\tau\delta(x)} \bigg(t^{-s}\bigg(\inf_{c\in\re}\ \vint{B(x,t)}|u(y)-c|^r\,dy\bigg)^{1/r}
\bigg)^q\,\frac{dt}{t}\bigg)^{1/q},
\end{equation}
$0<\tau<1$ and $\delta(x)=d(x,\Omega^c)$, is finite have been studied in \cite{Se} and \cite{Mi}. 
Since
\[
M^s_{p,q}(\Omega)\subset \hat\cF^s_{p,q}(\Omega)\subset C^s_{p,q}(\Omega),
\]
Theorem \ref{measure density from extension} implies that $C^s_{p,q}$-extension domains are regular. 
The converse is not true. For example, the slit disc $\Omega=B(0,1)\setminus([0,1)\times \{0\} )\subset\re^2$ is regular, but it 
is clearly not a $C^s_{p,q}$-extension domain.
\end{remark}

On the other hand, if $\Omega\subset\rn$ is an $(\eps,\delta)$-domain, then
$C^s_{p,q}(\Omega)$ coincides with the other Triebel--Lizorkin spaces considered in this section. 
This follows, for example, from the extension results obtained in \cite{Se} and \cite{Mi}, and the characterization of extension 
domains for the spaces $C^s_{p,q}$ below.
\begin{theorem}\label{ext Cspq spaces measure density}
Let $0<s<1$, $0<p,q<\infty$.
Then $\Omega\subset \rn$ is an extension domain for $C^s_{p,q}$ if and only if it satisfies measure density condition 
\eqref{measure density} and $C^s_{p,q}(\Omega)=M^s_{p,q}(\Omega)$.
\end{theorem}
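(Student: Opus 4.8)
The plan is to prove the two directions separately, using the already-established characterization of $M^s_{p,q}$-extension domains (Theorem \ref{m extension measure}) together with the embedding chain $M^s_{p,q}(\Omega)\subset\hat\cF^s_{p,q}(\Omega)\subset C^s_{p,q}(\Omega)$ observed in Remark \ref{rm:IntrinsicDefinitionsTriebel-Lizorkin}.

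First I would prove the ``only if'' direction. Suppose $\Omega$ is a $C^s_{p,q}$-extension domain. For measure density: since the norm $h$ in the definition of $C^s_{p,q}(\Omega)$ integrates only over $t\in(0,\tau\delta(x))$ and since $\hat E_p$-type quantities bound $h$ pointwise after controlling the cutoff, one has $\|u\|_{C^s_{p,q}(\Omega)}\le C\|u\|_{\hat\cF^s_{p,q}(\Omega)}\le C\|u\|_{M^s_{p,q}(\Omega)}$, so $M^s_{p,q}(\Omega)\subset C^s_{p,q}(\Omega)$ with norm bound. Composing a bounded $C^s_{p,q}$-extension operator with this inclusion (and using that on $\rn$ the space $C^s_{p,q}(\rn)$ coincides with $\hat\cF^s_{p,q}(\rn)=M^s_{p,q}(\rn)$, as $\delta\equiv\infty$) produces a bounded map $M^s_{p,q}(\Omega)\to M^s_{p,q}(\rn)$, i.e. $\Omega$ is an $M^s_{p,q}$-extension domain; Theorem \ref{measure density from extension} then gives \eqref{measure density}. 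For the identity $C^s_{p,q}(\Omega)=M^s_{p,q}(\Omega)$: the inclusion $M^s_{p,q}(\Omega)\subset C^s_{p,q}(\Omega)$ is already noted; for the reverse, take $u\in C^s_{p,q}(\Omega)$, extend it to $v\in C^s_{p,q}(\rn)=M^s_{p,q}(\rn)$ by the hypothesized operator, and restrict: since restriction is trivially bounded $M^s_{p,q}(\rn)\to M^s_{p,q}(\Omega)$, we get $u=v|_\Omega\in M^s_{p,q}(\Omega)$ with $\|u\|_{M^s_{p,q}(\Omega)}\le C\|v\|_{M^s_{p,q}(\rn)}\le C\|u\|_{C^s_{p,q}(\Omega)}$.

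Next the ``if'' direction. Assume $\Omega$ satisfies \eqref{measure density} and $C^s_{p,q}(\Omega)=M^s_{p,q}(\Omega)$ with equivalent norms. By Theorem \ref{m extension} there is a bounded extension operator $E\colon M^s_{p,q}(\Omega)\to M^s_{p,q}(\rn)$. Composing: given $u\in C^s_{p,q}(\Omega)$, the norm equivalence gives $u\in M^s_{p,q}(\Omega)$ with comparable norm, then $Eu\in M^s_{p,q}(\rn)=C^s_{p,q}(\rn)$ with $\|Eu\|_{C^s_{p,q}(\rn)}\approx\|Eu\|_{M^s_{p,q}(\rn)}\le C\|u\|_{M^s_{p,q}(\Omega)}\approx C\|u\|_{C^s_{p,q}(\Omega)}$, and $Eu|_\Omega=u$. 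Hence $E$ (restricted to $C^s_{p,q}(\Omega)$) is a bounded $C^s_{p,q}$-extension operator.

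\textbf{Main obstacle.} The delicate point is the comparison $\|u\|_{C^s_{p,q}(\Omega)}\le C\|u\|_{\hat\cF^s_{p,q}(\Omega)}$ (equivalently $M^s_{p,q}(\Omega)\subset C^s_{p,q}(\Omega)$) and, in the other direction, the clean identification $C^s_{p,q}(\rn)=M^s_{p,q}(\rn)$: the $C^s_{p,q}$-norm truncates the $t$-integral at the scale of the distance to the complement, which is harmless on $\rn$ (where $\delta\equiv+\infty$) but on a general domain requires the measure density condition to ensure the truncated integral still controls — and is controlled by — a fractional gradient; this is exactly where the hypothesis $C^s_{p,q}(\Omega)=M^s_{p,q}(\Omega)$ is doing the work, and it is what makes the slit-disc counterexample in Remark \ref{rm:IntrinsicDefinitionsTriebel-Lizorkin} necessary. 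I would cite the extension results of \cite{Se}, \cite{Mi} only for the remark following the theorem, not inside the proof itself. Everything else is a routine composition of bounded operators.
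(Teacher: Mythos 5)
Your argument is correct and is essentially the proof the paper intends (the paper states Theorem \ref{ext Cspq spaces measure density} without writing out a proof, relying exactly on the chain $M^s_{p,q}(\Omega)\subset\hat\cF^s_{p,q}(\Omega)\subset C^s_{p,q}(\Omega)$ from Remark \ref{rm:IntrinsicDefinitionsTriebel-Lizorkin}, the identification $C^s_{p,q}(\rn)=M^s_{p,q}(\rn)$, boundedness of restriction, and Theorems \ref{m extension} and \ref{measure density from extension}). The only cosmetic point is your added phrase ``with equivalent norms'' in the if-direction: if the hypothesis $C^s_{p,q}(\Omega)=M^s_{p,q}(\Omega)$ is read as equality of sets, norm equivalence follows from the continuous inclusion $M^s_{p,q}(\Omega)\subset C^s_{p,q}(\Omega)$ and the open mapping theorem for quasi-Banach spaces, so nothing is lost.
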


\medskip
\noindent {\bf Acknowledgements:} 
The research was supported by the Academy of Finland, grants no.\ 135561, 141021 and 272886.
Part of the paper was written when the third author was visiting the Universit\'e Paris-Sud (Orsay) in Springs 2013 and 2014 
and while the authors visited The Institut Mittag--Leffler in Fall 2013. They thank these institutions for their kind hospitality.

\vspace{0.5cm}
\noindent
\small{\textsc{T.H.},}
\small{\textsc{Department of Mathematics},}
\small{\textsc{P.O. Box 11100},}
\small{\textsc{FI-00076 Aalto University},}
\small{\textsc{Finland}}\\
\footnotesize{\texttt{toni.heikkinen@aalto.fi}}

\vspace{0.3cm}
\noindent
\small{\textsc{L.I.},}
\small{\textsc{Department of Mathematics and Statistics},}
\small{\textsc{P.O. Box 35},}
\small{\textsc{FI-40014 University of Jyv\"askyl\"a},}
\small{\textsc{Finland}}\\
\footnotesize{\texttt{lizaveta.ihnatsyeva@aalto.fi}}

\vspace{0.3cm}
\noindent
\small{\textsc{H.T.},}
\small{\textsc{Department of Mathematics and Statistics},}
\small{\textsc{P.O. Box 35},}
\small{\textsc{FI-40014 University of Jyv\"askyl\"a},}
\small{\textsc{Finland}}\\
\footnotesize{\texttt{heli.m.tuominen@jyu.fi}}\\
\small{\textsc{+358 40 805 4594}}

\end{document}